\newtheorem{theorem}{Theorem}[section]
\newtheorem{lemma}[theorem]{Lemma}
\newtheorem{fact}[theorem]{Fact}
\newtheorem{corollary}[theorem]{Corollary}
\newtheorem*{Ramsey's theorem}{Ramsey's Theorem}
\theoremstyle{definition}
\newtheorem{definition}[theorem]{Definition}
\DeclareMathOperator{\height}{\textit{height}}
\DeclareMathOperator{\set}{\textit{set}}
\theoremstyle{remark}
\newtheorem{remark}[theorem]{Remark}
\newtheorem {question}[theorem]{Question}
\numberwithin{equation}{section}
\begin{document}

\title{Cone avoiding closed sets}

\author{Lu Liu}
\address{Department of Mathematics, Central South University,
ChangSha 410083, China}
\curraddr{Department of Mathematics, Central South University, Railway Campus, He Hua cun No.15, Room 619
ChangSha 410083, China}
\email{g.jiayi.liu@gmail.com}
\thanks{We are grateful to Professor Peter Cholak for his help during the preparation of this paper.}

\subjclass[2010]{Primary 03B30; Secondary 03F35 03C62 68Q30 03D32 03D80 28A78}

\date{}

\dedicatory{}

\keywords{Ramsey's Theorem, Weak Weak K\"{o}nig's Lemma,
Martin-L\"{o}f randomness, randomness extraction}

\begin{abstract}
We prove that for an arbitrary subtree $T$ of $2^{<\omega}$ with each
element extendable to a path, a given countable class $\mathcal{M}$
closed under disjoint union, and any set $A$, if none of the members
of $\mathcal{M}$ strongly $k$-enumerate $T$ for
any $k$, then there exists an infinite set contained in either $A$ or
$\bar{A}$ such that for every $C\in\mathcal{M}$, $C\oplus G$ also does
not strongly $k$-enumerate $T$. We give
applications of this result, which include: (1) $\mathsf{RT_2^2}$
doesn't imply $\mathsf{WWKL_0}$; (2) [Ambos-Spies et
al.\ \cite{ambos2004comparing}] $\mathsf{DNR}$ is strictly weaker
than $\mathsf{WWKL_0}$; (3) [Kjos-Hanssen \cite{kjos2009infinite}] for
any Martin-L\"{o}f random set $A$ either $A$ or $\bar{A}$ contains an
infinite subset that does not compute any Martin-L\"{o}f random set;
etc. We also discuss further generalizations of this result.
\end{abstract}

\maketitle

\section{Introduction}
\label{sec1}
The interrelationship between topological properties and computability
theoretic properties (usually computational power) of a class is
widely studied in various branches of recursion theory. Here
topological property is a fairly feeble term; for example such
properties could involve
\begin{itemize}
\item all infinite homogeneous sets of a coloring of $[N]^{n}$.
\item all infinite subsets of $A$ or $\bar{A}$.
\item all paths through a tree $T$. For this case, different
combinatorial or topological conditions on the tree yield
different topological conditions on the corresponding class. For
example: $T$ is finitely branching; homogeneous; $[T]$ is of
positive measure, positive Hausdorff measure; etc.
\item all representations of a continuous function: $[0,1]\rightarrow
[0,1]$.
\end{itemize}

This paper focuses on two kinds of classes, $\textit{Part}_k$ and
$[Q]$, where
\begin{multline*}
\textit{Part}_k(A_1,A_2,\ldots,A_{k-1})= \\ \{X\in2^\omega:(\exists i\leq
k-1,X\subseteq A_i\vee X\subseteq
\omega-\bigcup_{j=1}^{k-1}A_j)\wedge |X|=\infty\},
\end{multline*}
and $[Q]$
is a closed set of $2^\omega$. On the positive side, Hirschfeldt
et al.\ \cite{hirschfeldt2008strength} proved that there exists
$A\in\Delta_2^0$ such that $\textit{DNR}\leq_u \textit{Part}_2(A)$, where
$\textit{DNR}=\{f\in\omega^\omega: \varphi_n(n)\!\uparrow\vee\varphi_n(n)\ne
f(n)\}$, and $\leq_u$ denotes the Muchnik reducibility; i.e.,
$\mathcal{C}_2\leq_u\mathcal{C}_1$ means $\forall X\in
\mathcal{C}_1\exists Y\in\mathcal{C}_2$ such that $Y\leq_T X$; while
on the negative side Dzhafarov and Jockusch \cite{Dzhafarov2009} showed
that for any $0<_T C$ and any $A\in 2^\omega$, $\{C\}\nleq_u
\textit{Part}_2(A)$.
Kjos-Hanssen \cite{kjos2009infinite} showed every real of positive
effective Hausdorff dimension computes an infinite
subset of a Martin-L\"{o}f random set, and Greenberg and Miller
\cite{greenbergdiagonally} showed the same for every $f\textit{-DNR}$ (an
$f\textit{-DNR}$ is a $\textit{DNR}$ $g$ such that $(\forall n) g(n)\leq f(n)$)
where $f$ is a
sufficiently ``slow growing'' computable function. Cholak
et al.\ \cite{Cholak2001} showed that for any $A\in\Delta_2^0$
there exists an infinite low$_2$ set $G$ contained in either $A$ or
$\bar{A}$.

We say we can cone avoid $\mathcal{C}_2$ within
$\mathcal{C}_1$ iff $\mathcal{C}_2\nleq_u\mathcal{C}_1$. In this
paper we generalize the result of the author's \cite{Liu2010RT} which
proves the cone avoidance result for arbitrary $\textit{Part}_k$ and
$[Q]=2\textit{-DNR}$
(where $2\textit{-DNR}=\{X\in 2^{\omega}:\forall
n\ \varphi_n(n)\!\downarrow\rightarrow\varphi_n(n)\ne X(n)\}$).

The following is the main theorem in this paper. For a closed set
$[Q]$ (also denoted as $\mathcal{Q}$) of $2^\omega$ let $Q=\{\rho\in
2^{<\omega}: [\rho]^{\preceq}\cap[Q]\neq\emptyset\}$ (where $[\rho]^{\preceq} =
\{X\in 2^\omega: X\supset \rho \}$).
\begin{definition}[Beigel et al.\ \cite{beigela-enumerations}]
\label{def1}
Fix the canonical representation of finite sets, with each finite set
denoted by $D_n$, where $n$ is the (canonical) index of this finite
set. Let $k$ be a positive integer. Let $\{X_i\}_{i\in\omega}$ be an array of non-empty sets.

A \textit{strong $k$-enumeration of
$\{X_i\}_{i\in\omega}$} is a function $h\in \omega^\omega$ such that $|D_{h(n)}|\leq k$
and $X_n\cap D_{h(n)}\neq \emptyset$. A \textit{strong
constant-bound-enumeration} of $\{X_i\}_{i\in\omega}$ is a strong $k$-enumeration for \emph{some}
$k$.

Let $Q$ be a set of strings, say a tree; in the following text we also regard $Q$ as an array of
sets, i.e.\ $Q = \bigcup_{n\in\omega} Q_n$
where $Q_n$ is the set of $n$-length strings of $Q$. A strong $k$-enumeration of
$Q$ means a strong $k$-enumeration
of $\{Q_i\}_{i\in\omega}$.
\end{definition}

Note that if $Q$ is a tree, $W\subseteq \omega$ is infinite, then a strong $k$-enumeration
of $\{Q_{r_n}\}_{r_n\in W}$ can be effectively translated to a strong $k$-enumeration
of $\{Q_i\}_{i\in \omega}$.

\begin{theorem}
\label{th1}
Suppose $\mathcal{M}=\{C^{(1)},C^{(2)},\ldots\}$ is a countable class
of sets, and $\mathcal{Q}=[Q]$ is a closed set (of $2^{\omega}$) such that
\begin{itemize}
\item $\forall C^{(i)},C^{(j)}\in \mathcal{M}, C^{(i)}\oplus
C^{(j)}\in \mathcal{M}$ and
\item $Q$ doesn't admit a strong
constant-bound-enumeration computable in $\mathcal{M}$
(i.e.\ computable in some member of $\mathcal{M}$).

\end{itemize}
Then for any set $A$, there exists an infinite set $G$ such that
\begin{itemize}
\item $G\subseteq A\vee G\subseteq \bar{A}$ and
\item $\forall C\in\mathcal{M}$, $G\oplus C$ also doesn't compute any
strong constant-bound-e\-num\-e\-ra\-tion of $Q$. In particular,
$G\oplus C$ does not compute any member of $[Q]$, since otherwise
 clearly $G\oplus C$
computes a strong 1-enumeration of Q.
\end{itemize}
\end{theorem}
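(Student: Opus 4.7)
I would construct $G$ via a two-sided Mathias-style forcing. A condition is a triple $(F_0, F_1, X)$ with $F_0 \subseteq A$, $F_1 \subseteq \bar A$ finite, $\max(F_0 \cup F_1) < \min X$, and $X$ an infinite \emph{preserving} reservoir in the sense that $C \oplus X$ computes no strong constant-bound-enumeration of $Q$ for any $C \in \mathcal{M}$. The initial condition takes $X = \omega$, whose preservation is exactly the hypothesis of the theorem on $Q$. An extension appends finitely many elements of $X \cap A$ to $F_0$ or of $X \cap \bar A$ to $F_1$, and shrinks $X$ to a still-preserving infinite subset. A sufficiently generic filter produces two candidate sets $G_0 \subseteq A$, $G_1 \subseteq \bar A$; since $X = (X \cap A) \cup (X \cap \bar A)$ has an infinite part at every stage, standard bookkeeping keeps at least one $G_i$ infinite, and this is the $G$ I output.

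\textbf{Diagonalization.} Fix $e$, $k$, and $C \in \mathcal{M}$. I need density of conditions forcing $\Phi_e^{G_i \oplus C}$ to fail as a strong $k$-enumeration of $Q$ for \emph{each} $i \in \{0,1\}$ (handling the two sides sequentially is fine, as moves on one side do not disturb the other). Suppose toward contradiction that no extension of $(F_0, F_1, X)$ diagonalizes side $0$: i.e.\ no extension witnesses, at some $n$, that $\Phi_e^{G_0 \oplus C}(n)$ diverges, outputs an index of a set of size $> k$, or outputs an index of a set disjoint from $Q_n$. Then for every $n$ a $C \oplus X$-computable search over finite $\sigma \subseteq X \cap A$ locates one with $\Phi_e^{(F_0 \cup \sigma) \oplus C}(n) \downarrow = m$, $|D_m| \leq k$, and $D_m \cap Q_n \neq \emptyset$. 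The function $n \mapsto m$ is then a strong $k$-enumeration of $Q$ computable from $C \oplus X$, contradicting preservation of $X$. The symmetric argument handles side $1$, and the countably many requirements indexed by $(e, k, C)$ with $C$ ranging over the countable class $\mathcal{M}$ and $k$ over $\omega$ are dispatched in turn.

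\textbf{Main obstacle.} The principal difficulty is preserving the reservoir property along shrinking: at each step one must produce an infinite $X' \subseteq X$ with $C \oplus X'$ computing no strong constant-bound-enumeration of $Q$ for every $C \in \mathcal{M}$, since the diagonalization argument critically uses preservation. Arbitrary infinite subsets of a preserving reservoir need not preserve (subset and Turing reducibility do not interact monotonically), so the construction of $X'$ must be delicate, typically via an effective-measure or Ramsey-style split of $X$ controlled by $\mathcal{M}$; this is where the $\oplus$-closure of $\mathcal{M}$ is exploited, to combine the computational information used to build the new reservoir with oracles already in $\mathcal{M}$ without leaving the class. A related technicality is forcing divergence of $\Phi_e^{G_i \oplus C}(n)$: since divergence cannot be witnessed by a single finite convergent computation, one needs a compactness-style argument inside the reservoir to rule out all possible completions, and the ``constant-bound'' (rather than fixed-$k$) form of the hypothesis provides the elasticity needed when such reservoir surgery inflates the effective bound in the extracted enumeration.
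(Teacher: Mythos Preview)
Your diagonalization step has a fatal gap. You claim that, assuming no extension diagonalizes on side $0$, ``for every $n$ a $C \oplus X$-computable search over finite $\sigma \subseteq X \cap A$'' locates a convergent computation with good output. But $A$ is an arbitrary set, not computable from $C \oplus X$, so you cannot restrict the search to $\sigma \subseteq X \cap A$. If instead you search over all finite $\sigma \subseteq X$, the hypothesis ``no extension of side $0$ diagonalizes'' says nothing about those $\sigma$ that straddle $A$ and $\bar A$: such a $\sigma$ may well produce an output $D_m$ with $D_m \cap Q_n = \emptyset$, and you can neither detect this (since $Q$ is only the tree of a closed set, not decidable) nor use $\sigma$ to extend either side. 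So from $C \oplus X$ alone you establish neither the totality nor the correctness of the enumeration you wish to extract. The same non-computability of $A$ obstructs any attempt to ``force divergence'' by shrinking the reservoir, since the relevant $\Pi^0_1$ condition again quantifies over subsets of $X \cap A$.

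This is exactly why the paper abandons single-reservoir Mathias forcing. A condition there is an $\mathcal M$-computable \emph{tree} of ordered $k$-partitions of a cofinite set, carrying a pair of initial segments per part, and the number of parts $k$ \emph{grows} along the construction. The undecidable test ``output misses $Q_n$'' is replaced by the decidable test ``output misses the clopen set $V$'': for each clopen $V$ one forms the $\Pi^{0,\mathcal M}_1$ class $T_V$ of $2k$-partitions refining the current tree on which no functional abandons $V$. The combinatorial core (Definition~\ref{def2}, Facts~\ref{fac1}, \ref{fac4}, \ref{fac5}, and especially Lemma~\ref{fac8}) is a genuine dichotomy: either one finds a $k'$-\emph{disperse} family $V^{(1)},\dots,V^{(n)}$ with all $T_{V^{(m)}}$ nonempty, whence the \emph{Cross} operation builds a new partition tree on every part of which the requirement is forced; or the $\mathcal M$-c.e.\ collection of clopen $V$ with $T_V = \emptyset$ is rich enough to yield an $\mathcal M$-computable strong $k'$-enumeration of $Q$, where $k' = \sum_i (e_{il} + e_{ir})$ is typically much larger than the bound being diagonalized---this inflation is precisely why the hypothesis reads ``no strong \emph{constant-bound}-enumeration.'' Your ``main obstacle'' paragraph correctly senses that something delicate is needed, but the mechanism you gesture at (``effective-measure or Ramsey-style split'') does not do the job here; the disperse/Cross machinery is what actually carries the argument.
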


To gain some interest, we first introduce some applications in reverse
mathematics and algorithm randomness theory, which will be derived
from Theorem \ref{th1} in Section \ref{sec5}.
\newline

Reverse mathematics studies the proof theoretic strength of various second order arithmetic
statements. Several statements are so important and fundamental that they serve as level
lines. Many mathematical theorems are found to be equivalent to these statements and
they are unchanged under small perturbations of themselves. The relationships between
these statements and ``other'' statements draw much attention. $\mathsf{WKL}_0$
is one of these statements. $\mathsf{WKL}_0$ states that every infinite binary
tree admits an infinite path. It is well known that as a second order arithmetic
statement, $\mathsf{WKL}_0$ is equivalent to the
statement that for any set $C$ there exists $B\gg C$, where $B\gg C$ means $B$
is of \textrm{PA}-degree relative to $C$.
A good survey of reverse mathematics is \cite{simpson1999subsystems} or
\cite{friedman1975some}, \cite{friedman1976systems}. One of the second order
arithmetic statements close to $\mathsf{WKL}_0$ is $\mathsf{RT}_2^2$.

\begin{definition}
Let $[X]^k$ denote $\{F\subseteq X: |F|=k\}$. A $k$-coloring $f$ is a function $[X]^n\rightarrow \{1,2,\ldots, k\}$. A
set $H\subseteq [X]^k$ is homogeneous for $f$ iff $f$ is constant on $[H]^k$. A stable coloring $f$ is a
2-coloring of $[\mathbb{N}]^2$ such that $(\forall n\in\mathbb{N})(\exists N)(\forall m>N)$
$f(\{m,n\})=f(\{N,n\})$. For a stable coloring $f$, $f_1=\{n\in\mathbb{N}: (\exists N)(\forall
m>N),f(m,n)=1\}$, $f_2=\mathbb{N}-f_1$.
\end{definition}

\begin{Ramsey's theorem}[Ramsey \cite{ramsey1930problem}]
 For any n and k, every k-coloring of $[\mathbb{N}]^n$ admits an infinite homogeneous set.
 \end{Ramsey's theorem}

Let $\mathsf{RT}_k^n$ denote Ramsey's theorem for $k$-colorings of $[\mathbb{N}]^n$ and $\mathsf{SRT}_k^2$ denote Ramsey's theorem
restricted to stable coloring of pairs. It is clear that $\mathsf{RT}_k^2$ implies $\mathsf{SRT}_k^2$.

 Jockusch \cite{jockusch1972ramsey} showed that for $n>2$ $\mathsf{RT}_2^n$ is equivalent to $\mathsf{ACA}_0$, while Seetapun and Slaman
 \cite{seetapun1995strength} showed that $\mathsf{RT}_2^2$ does not imply $\mathsf{ACA}_0$. As to $\mathsf{WKL}_0$, Jockusch
 \cite{jockusch1972ramsey} proved that $\mathsf{WKL}_0$ does not imply $\mathsf{RT}_2^2$. Whether $\mathsf{RT}_2^2$ implies $\mathsf{WKL}_0$ remained open. A more detailed survey of Ramsey's theorem in view of reverse mathematics can be found in Cholak, Jockusch and Slaman
 \cite{Cholak2001}. Say a set $S$ cone avoids a class $\mathcal{M}$ iff $(\forall C\in\mathcal{M})[ C\not\leq_T S]$.

This problem has been a major focus in reverse mathematics in the past twenty years. The first important progress was made by Seetapun and Slaman
\cite{seetapun1995strength}, where they showed that
\begin{theorem}[Seetapun and Slaman \cite{seetapun1995strength}]
For any countable class of sets $\{C_j\}$, $j\in\omega$, such that each $C_i$ is non-computable, any computable
2-coloring of pairs admits an infinite cone avoiding (for $\{C_j\}$) homogeneous set.
\end{theorem}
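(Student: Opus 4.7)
The plan is to build $G$ by a Mathias-style forcing whose conditions are pairs $(F,X)$ with $F$ a finite set, $X\subseteq\omega$ infinite with $\min X>\max F$, some color $i\in\{1,2\}$ such that $f(\{a,x\})=i$ for all $a\in F$ and $x\in X$, and the reservoir $X$ cone-avoiding $\{C_j\}_{j\in\omega}$. Extension is by end-extending $F$ into $X$ and correspondingly shrinking $X$. A sufficiently generic $G=\bigcup F$ is automatically infinite and homogeneous of color $i$.

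For cone avoidance it suffices to meet, for each pair $(e,j)\in\omega^2$, the requirement $\Phi_e^G\neq C_j$. Given a condition $(F,X)$, examine the collection of finite homogeneous extensions $F\cup H$ with $H\subseteq X$. Either (a) there exist $H_1,H_2\subseteq X$ and some $n$ with $\Phi_e^{F\cup H_1}(n)\!\downarrow\,\neq\,\Phi_e^{F\cup H_2}(n)\!\downarrow$, in which case at least one of these values differs from $C_j(n)$ and we extend into the corresponding branch; or (b) every convergent computation agrees, in which case the partial function $g$ defined by $g(n)=y\Longleftrightarrow(\exists H\subseteq X)[\Phi_e^{F\cup H}(n)\!\downarrow\,=y]$ satisfies $g\leq_T X$. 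Since $C_j\not\leq_T X$, either $g$ is not total (and the generic $\Phi_e^G$ automatically diverges at some $n$ outside the domain of $g$) or $g$ is total but $g\neq C_j$ at some $n$; in the latter case we extend to a witnessing $H$ to force the disagreement.

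The main obstacle is the non-stable case: for an arbitrary computable $f$ one cannot keep a fixed reservoir on which $f(\{a,x\})$ stays constant in $a$ as $F$ grows, so the reservoir must be refined each time $F$ gains an element. The standard Seetapun--Slaman workaround is a two-level construction. First, build an $f$-cohesive set $R$ --- an infinite set such that for every $n$, $R$ is almost contained in either $\{m:f(\{n,m\})=1\}$ or $\{m:f(\{n,m\})=2\}$ --- with $R$ itself cone-avoiding $\{C_j\}_{j\in\omega}$; this is a separate Mathias argument with computable reservoirs. Then, working inside $R$, the coloring restricted to pairs of $R$ is stable, the shrinking of the reservoir required at each stage is $R$-computable, and the dichotomy above is executed with reservoirs $X\subseteq R$. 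Propagating cone avoidance of the whole class $\{C_j\}_{j\in\omega}$ through both levels --- both the cohesive set and the subsequent homogeneous set --- is the technical heart of the argument, and is the structural pattern that Theorem \ref{th1} of the present paper generalizes.
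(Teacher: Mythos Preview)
The paper does not give its own proof of this theorem: it is quoted in the introduction purely as background, attributed to Seetapun and Slaman, with no argument supplied. So there is no ``paper's proof'' to compare against.

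Your outline follows the now-standard Cholak--Jockusch--Slaman decomposition (cohesive set first, then the stable case), and the cohesive stage is fine. The Mathias forcing you describe for the stable stage, however, has a real gap. You fix a single color $i$ and take conditions $(F,X)$ with $f(\{a,x\})=i$ for all $a\in F$, $x\in X$. Inside the cohesive set $R$, every $a\in R$ has a limit color, giving a partition $R=A_1\cup A_2$; an element $a$ can be added to $F$ while keeping the reservoir infinite only if $a\in A_i$. In your dichotomy, the witnesses $H_1,H_2\subseteq X$ in case~(a) are found by searching over \emph{all} finite $H\subseteq X$ making $F\cup H$ homogeneous of color $i$; nothing forces $H_k\subseteq A_i$, so after extending, the shrunk reservoir $X'=\{x\in X: x>\max H_k,\ (\forall a\in H_k)\,f(\{a,x\})=i\}$ may be finite. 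If instead you restrict the search to $H\subseteq A_i\cap X$, the function $g$ in case~(b) is no longer $X$-computable, since $A_i$ is only $\Delta^{0,R}_2$, and the appeal to $C_j\not\leq_T X$ fails.

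The fix is exactly what the Dzhafarov--Jockusch result (cited immediately after this one in the paper) and Theorem~\ref{th1} itself do: carry both colors at once, with a pair of finite sets over a common reservoir, and accept that in the end only \emph{one} side is guaranteed to be infinite and cone-avoiding. From an infinite cone-avoiding $G\subseteq A_i$ a computable thinning then produces the homogeneous set.
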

\noindent Parallel to this result, using Mathias Forcing in a different manner, Dzhafarov and Jockusch \cite{Dzhafarov2009} Lemma 3.2 proved that
\begin{theorem}[Dzhafarov and Jockusch \cite{Dzhafarov2009}]
For any set $A$ and any countable class $\mathcal{M}$, such that each member of $\mathcal{M}$ is non-computable, there exists an
infinite set $G$ contained in either $A$ or its complement such that $G$ is cone avoiding for $\mathcal{M}$.
\end{theorem}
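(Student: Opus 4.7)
The plan is to build $G$ via Mathias forcing with computable reservoirs, run in parallel on the two sides of the partition $\omega = A \cup \bar A$. Because $A$ is arbitrary and not effectively accessible, a condition is a triple $(E_0, E_1, X)$ with $E_0, E_1$ disjoint finite, $E_0 \subseteq A$, $E_1 \subseteq \bar A$, and $X$ infinite computable with $\min X > \max(E_0 \cup E_1)$. This represents parallel approximations to candidates $G_0 \subseteq A$ and $G_1 \subseteq \bar A$; since $X$ is always partitioned by $A$ and $\bar A$ with at least one part infinite, at least one of $G_0, G_1$ ends up infinite, and we take $G$ to be that side.

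Enumerate the countably many requirements
\[
R_{e, j, i} : \ \Phi_e^{G_i} \ne C^{(j)} \qquad (e, j \in \omega,\ i \in \{0, 1\},\ C^{(j)} \in \mathcal{M}),
\]
and meet them one at a time by extending the current condition. The key combinatorial lemma is: for any condition $(E_0, E_1, X)$ and any triple $(e, j, i)$, some extension either meets $R_{e, j, i}$ on side $i$ directly or effectively kills side $i$ while making genuine progress on side $1 - i$. The proof is a dichotomy based on the c.e.\ set
\[
S = \bigl\{ H \subseteq X \text{ finite} : \exists n\ \Phi_e^{E_i \cup H}(n)\!\downarrow\ \ne C^{(j)}(n) \bigr\}.
\]
If some $H \in S$ is monochromatic in the color of side $i$ (that is, $H \subseteq A$ when $i = 0$, or $H \subseteq \bar A$ when $i = 1$), absorb $H$ into $E_i$: by the use principle the disagreement is preserved under further extension, permanently meeting $R_{e, j, i}$. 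If no such $H$ exists, every $H \in S$ contains elements of the opposite color; these can be harvested to extend $E_{1 - i}$ and redirect effort to side $1 - i$, the alternative being that a computable search through $S$ furnishes an $X$-computable description of $C^{(j)}$, contradicting non-computability.

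The main obstacle is the second branch of the dichotomy: converting the failure to find a monochromatic diagonalizing witness into productive progress on the opposite side. This is the two-sided pigeonhole technique of Dzhafarov and Jockusch, and it crucially relies on (i) maintaining two candidate sets $G_0, G_1$ so that being stuck on one side is constructive for the other, and (ii) keeping the reservoir $X$ computable throughout, so that any effective procedure for $C^{(j)}$ derived in the bad case is an absolute computation, contradicting non-computability. Once the lemma is established, the standard Mathias iteration through the countably many requirements produces a descending sequence of conditions whose generic candidates satisfy the stated cone avoidance, and the infinite side is the desired $G$.
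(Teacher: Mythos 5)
You have the right frame (Mathias conditions $(E_0,E_1,X)$ with $E_0\subseteq A$, $E_1\subseteq\bar A$ and a computable reservoir serving two parallel candidates), but the second branch of your ``key combinatorial lemma'' is where the entire content of the theorem lives, and as sketched it does not work. First, a small point: $S$ is not c.e.\ --- membership refers to $C^{(j)}(n)$, so $S$ is only c.e.\ in $C^{(j)}$; the effective object is the set of triples $(H,n,v)$ with $\Phi_e^{E_i\cup H}(n)\!\downarrow=v$. The serious problem is the failure case. Suppose no $H\in S$ is monochromatic in color $i$. A ``computable search through $S$'' does not then furnish a computation of $C^{(j)}$: the sets $H$ such a search finds need not be monochromatic, so the values they output carry no information (a wrong value from a mixed $H$ contradicts nothing), while restricting the search to $H\subseteq A$ (resp.\ $\bar A$) turns it into an $A\oplus X$-computation --- and $A$ is arbitrary, so it may itself compute $C^{(j)}$ and there is no contradiction with noncomputability. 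Likewise, ``harvesting'' the opposite-colored elements of such $H$ into $E_{1-i}$ is not progress in any sense your iteration can use: it meets no requirement $R_{e',j',1-i}$, and it does not force partiality of $\Phi_e^{G_i}$ either. So the dichotomy, as stated, either meets the requirement or does nothing, and the theorem is not proved.

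What is missing is precisely the two-sided (cross-partition) question: one must ask, for a coupled family of functionals on both sides, whether there exist $n$ and a finite $F\subseteq X$ such that for \emph{every} split $F=F_0\sqcup F_1$ some side $i$ yields $\Phi_{e_i}^{E_i\cup F_i}(n)\!\downarrow\ne C^{(j)}(n)$. In the positive case the true split $F\cap A$, $F\cap\bar A$ hands you, non-uniformly, a legitimate one-sided extension; in the negative case the hypothesis no longer mentions $A$ at all, and (after a further sub-dichotomy handling divergence) it yields a procedure computing $C^{(j)}$ from the reservoir alone, which is where noncomputability is finally contradicted. This is exactly what the present paper's machinery does in its generalization (Theorem \ref{th1}): the notion of ``abandoning'', the disperse sequences of clopen sets, and Lemma \ref{fac8} implement the $A$-free computation in the failure case, while requirements are coupled across the left and right sides of every part of an ordered partition tree rather than treated one side at a time. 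You also leave the endgame unaddressed: with per-side requirements and a rule that ``kills'' a side whenever it is stuck, nothing prevents different requirements from killing both sides, nor guarantees that the surviving side is simultaneously infinite and has met \emph{all} of its own requirements. In the paper this bookkeeping is exactly Lemma \ref{fac7} (some part still meets both $A$ and $\bar A$, so the P-operation succeeds) together with the selection of an infinite path through the finitely branching construction tree; Dzhafarov and Jockusch's own proof needs an analogous disjunctive pairing and final extraction argument. Note also that the paper only cites this result; its proof technique for Theorem \ref{th1} is the relevant correct template to follow.
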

\noindent The main idea is to restrict the computational complexity (computability power) of the homogeneous set as much as possible, with complexity measured by various measurements. Along this line, with simplicity measured by extent of
lowness, Cholak, Jockusch and Slaman \cite{Cholak2001} Theorem 3.1 showed, by a fairly ingenious argument,
\begin{theorem}[Cholak, Jockusch, and Slaman \cite{Cholak2001}]
For any computable coloring of the unordered pairs of natural numbers with finitely many colors, there is an infinite
$low_2$ homogeneous set $X$.
\end{theorem}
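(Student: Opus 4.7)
The plan is to follow the Cholak--Jockusch--Slaman decomposition, splitting the problem into a \emph{cohesive} step and a \emph{stable} step, each handled by a low${}_2$ Mathias-style forcing. Given a computable $f\colon[\omega]^2\to k$, first I would consider the uniformly computable family $\{R_{m,i}:m\in\omega,\,i<k\}$ with $R_{m,i}=\{n>m:f(\{m,n\})=i\}$, and build an infinite set $C$, low${}_2$, that is cohesive for this family. By definition of cohesiveness, on such a $C$ the restriction $f\restriction[C]^2$ is stable: for every $m\in C$ there is a unique color $c_f(m)<k$ with $f(\{m,n\})=c_f(m)$ for almost every $n\in C$ above $m$, and the induced coloring $c_f\colon C\to k$ is $\Delta_2^{0,C}$.

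Second, I would apply a low${}_2$-preservation construction to the $\Delta_2^{0,C}$ partition $c_f$ to extract an infinite $H\subseteq C$ monochromatic for $c_f$, with $H$ low${}_2$ relative to $C$. Combining with the low${}_2$-ness of $C$, the set $H$ is low${}_2$ outright; and since $c_f(n)$ is the eventual $f$-color of $\{n,m\}$ for $m$ above $n$ in $C$, by thinning $H$ (using an internal diagonalization at each added element to drop the finitely many $m$'s where $f(\{n,m\})\ne c_f(n)$) one obtains an $f$-homogeneous infinite subset of $H$, still low${}_2$. The case $k>2$ is reduced to $k=2$ by iterating $\lceil\log_2 k\rceil$ times: group the colors into two blocks, apply the $k=2$ result, and recurse on the surviving block; low${}_2$-ness is preserved through finitely many such applications.

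Both forcing constructions are Mathias forcings with conditions $(\sigma,X)$, where $\sigma$ is a finite stem and $X$ an infinite reservoir, $\max\sigma<\min X$. For the cohesive step the reservoirs are required to be contained in one of $R_{m,i}$ or its complement for each $(m,i)$ listed below $|\sigma|$; for the stable step they are required to lie in one color class of $c_f$. The main obstacle, which occupies the bulk of the argument, is controlling the $\Sigma_2^G$-theory of the generic $G$ so that $G''\leq_T\emptyset''$. The standard resolution is to enumerate the $\Sigma_2$ requirements $\{\varphi_e\}$ and, at stage $e$, use $\emptyset''$ to decide whether some extension of the current condition forces $\varphi_e(G)$ to hold. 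The key technical lemma needed is that at each stage one can find an admissible reservoir that is itself ``low over $\emptyset'$'', i.e.\ lies in a $\Pi_1^{0,\emptyset'}$-class whose members all have jump computable from $\emptyset''$; this is produced by the low basis theorem relativized to $\emptyset'$. With this in hand, the entire construction is carried out below $\emptyset''$, yielding $G''\leq_T\emptyset''$ and hence a low${}_2$ infinite $f$-homogeneous set.
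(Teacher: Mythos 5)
First, a point of comparison: the paper does not prove this statement at all --- it is quoted from Cholak, Jockusch and Slaman \cite{Cholak2001} (their Theorem 3.1) --- so your outline can only be measured against the original argument. Your overall architecture is indeed the classical one: a low$_2$ cohesive set $C$ for the family $R_{m,i}$ to reduce to the stable case, a jump-controlled Mathias forcing for the induced limit colouring $c_f$, relativization to compose the two low$_2$ bounds, thinning to pass from $c_f$-monochromatic to $f$-homogeneous, and colour-halving for $k>2$; all of that is fine in outline.

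The genuine gap is in the stable step, which is the heart of the theorem. You stipulate that the reservoirs of the second forcing ``are required to lie in one color class of $c_f$,'' i.e.\ you fix a colour class in advance and force inside it. This cannot work. For a stable computable colouring the limit class is an essentially arbitrary $\Delta^{0,C}_2$ set; for instance it can be (a copy of) the set of codes of initial segments of $C'$, in which case the class is infinite but every infinite subset of it computes $C'\geq_T\emptyset'$, hence has double jump above $\emptyset'''$ and is not low$_2$. In such a case the low$_2$ homogeneous set exists only inside the \emph{other} class, and no rule applied before the construction (in particular not ``pick an infinite class'' --- both classes are infinite in this example) can tell you which side to commit to. The actual Cholak--Jockusch--Slaman construction, like the tree-forcing machinery of the present paper with its pairs $\langle\rho_{il},\rho_{ir}\rangle$ and the ``either the left-hand side or the right-hand side steps forward'' bookkeeping, carries candidate stems on \emph{both} sides simultaneously, with stems constrained to $A$ and to $\bar{A}$ respectively but with a shared reservoir that is \emph{not} required to lie in either class; only the verification shows that one of the two sides makes progress infinitely often, and that side yields the homogeneous set. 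Without this two-sided structure your $\Sigma_2$-deciding machinery has nothing to fall back on when the chosen class is the bad one. Relatedly, the lemma you single out as the key (the low basis theorem relative to $\emptyset'$ to keep reservoirs low over $\emptyset'$) is not where the difficulty lies: in the cohesive step the reservoirs are computable Boolean combinations of the $R_{m,i}$, and in the stable step one can use reservoirs computable from $C'$; the real work is in formulating and answering the $\Sigma_2$/$\Pi_2$ forcing questions for the two sides at once and in the side-selection just described.
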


The author's \cite{Liu2010RT} proved that $\mathsf{RT_2^2}$ does not imply $\mathsf{WKL_0}$.
Meanwhile, it had also been wondered whether $\mathsf{RT_2^2}$ implies $\mathsf{WWKL_0}$.
Here $\mathsf{WWKL_0}$ is a weaker version of $\mathsf{WKL_0}$ as follows.

\begin{definition}
$\mathsf{WWKL_0}$: $\forall T\subseteq 2^{<\omega},$ if $T$
is a tree s.t.\ $(\exists a>0\exists b\forall n) \dfrac{|\{\rho\in T_n\}|}{2^n}>\dfrac{a}{b}$,
then there exists an infinite path $X\in [T]$.
\end{definition}
Intuitively, $\mathsf{WWKL_0}$ states: for every infinite tree
$T\subseteq2^{<\omega}$, if $\mu([T])>0$ then there is a path through
$T$. It is also considered as a statement in the language of second order arithmetic.
For background on reverse mathematics see
\cite{simpson1999subsystems}, and \cite{Cholak2001}, which focuses on
the proof theoretic strength of Ramsey's Theorem for pairs.

Our major application is that,
\begin{corollary}
\label{cor1}
Over $\mathsf{RCA}_0$, $\mathsf{RT_2^2}$ \text{ does not imply } $\mathsf{ WWKL_0}$.
\end{corollary}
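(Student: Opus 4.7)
The plan is to exhibit an $\omega$-model of $\mathsf{RCA}_0+\mathsf{RT}_2^2+\neg\mathsf{WWKL}_0$, constructed as a countable Turing ideal $\mathcal{M}$ built by stages so as to apply Theorem~\ref{th1} while preserving the absence of Martin-L\"of randoms.

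Fix a $\Pi^0_1$ class $\mathcal{Q}=[Q]$ of positive measure all of whose members are Martin-L\"of random; concretely, take $[Q]=\{X:\forall n\,K(X\restriction n)>n-c\}$ for a suitable $c$, which is $\Pi^0_1$, of positive measure (Levin--Chaitin), and contained in $\mathrm{MLR}$ (Levin--Schnorr). A standard measure-theoretic argument shows that $Q$ admits no computable strong constant-bound-enumeration. By Ku\v{c}era's universality theorem, a set computes a Martin-L\"of random if and only if it computes a member of $[Q]$, so the invariant ``no element of $\mathcal{M}$ computes a strong constant-bound-enumeration of $Q$'' implies $\mathcal{M}$ contains no Martin-L\"of random (since any such random would, by Ku\v{c}era, compute some $Y\in[Q]$, and $Y$ itself gives the strong $1$-enumeration $n\mapsto\{Y\restriction n\}$).

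Build $\mathcal{M}=\bigcup_s\mathcal{M}_s$ in stages, with $\mathcal{M}_0$ the computable sets, enumerating all $\mathsf{RT}_2^2$ instances via the Cholak--Jockusch--Slaman decomposition $\mathsf{RT}_2^2\equiv_{\mathsf{RCA}_0}\mathsf{SRT}_2^2+\mathsf{COH}$. For a stable $2$-coloring $f\in\mathcal{M}_s$, let $A=\{n:\lim_m f(n,m)=1\}$ and apply Theorem~\ref{th1} to $(\mathcal{M}_s,\mathcal{Q},A)$: this yields an infinite $G\subseteq A$ or $G\subseteq\bar A$ such that $G\oplus C$ never computes a strong constant-bound-enumeration of $Q$ for any $C\in\mathcal{M}_s$. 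A $G\oplus f$-computable thinning $H$ of $G$ is homogeneous for $f$, and since $H\oplus C'\le_T G\oplus(f\oplus C')$ for any $C'\in\mathcal{M}_s$, the Turing ideal generated by $\mathcal{M}_s\cup\{H\}$ preserves the invariant. For a $\mathsf{COH}$ instance, a Mathias-forcing variant of the same partitioning argument, iterated along the given sequence $\{R_i\}$, yields a cohesive set preserving the invariant.

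At the end, $\mathcal{M}\models\mathsf{RCA}_0+\mathsf{RT}_2^2$ and $\mathcal{M}$ contains no Martin-L\"of random; by the standard characterization of $\mathsf{WWKL}_0$ as the existence, for every oracle $Z\in\mathcal{M}$, of a set in $\mathcal{M}$ that is $1$-random relative to $Z$ (take $Z=\emptyset$), we conclude $\mathcal{M}\not\models\mathsf{WWKL}_0$. The principal obstacle is the $\mathsf{COH}$ step: preserving the strong-enumeration invariant inside a Mathias-forcing construction for cohesiveness requires a careful adaptation of the forcing underlying Theorem~\ref{th1}, analogous to how $\mathsf{COH}$ was handled in the author's earlier $\mathsf{WKL}_0$-avoidance construction. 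A secondary concern is the initial verification that $Q$ admits no computable strong constant-bound-enumeration, which is the hypothesis needed to launch the iteration.
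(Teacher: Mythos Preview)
Your plan is essentially the paper's: build a Turing ideal by iterating Theorem~\ref{th1} on $\mathsf{SRT}_2^2$ instances and handling $\mathsf{COH}$ separately, via the Cholak--Jockusch--Slaman decomposition. Two points where your expectations diverge from the paper are worth flagging.

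First, what you call the principal obstacle is not one. The paper's $\mathsf{COH}$ step is a completely ordinary finite-extension Mathias argument, not an adaptation of the tree forcing behind Theorem~\ref{th1}. With reservoir $L=\bigcap_{i\le s}R^i$ (computable from $\bigoplus_i R^{(i)}\in\mathcal{M}$) and a single functional $\Psi_e$, either some finite $\rho$ with $\set(\rho)\subseteq L$ forces $\Psi_e^\rho(n)$ to be undefined, too large, or disjoint from $Q$; or else searching over all such $\rho$ yields a $\bigoplus_i R^{(i)}$-computable strong $e$-enumeration of $Q$, contradicting the invariant. No partitioning or disperse-sequence machinery is needed here.

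Second, the verification that $Q$ admits no computable strong constant-bound-enumeration is not measure-theoretic; the paper proves it (Lemma~\ref{lem1}) straight from Kolmogorov complexity: any computable list of at most $k$ strings of length $n$ consists of strings of complexity $O(\log n)$, so for large $n$ none is $c$-incompressible. Your single-$[Q]$-plus-Ku\v{c}era reduction is a legitimate alternative to the paper's choice of avoiding all the classes $[T_c]$ simultaneously; the paper takes the latter route and thereby avoids invoking Ku\v{c}era.
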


Here $\mathsf{RCA}_0$ is the axiom which says that for any member $X$ of a model of arithmetic, all sets computable in $X$ should also be included in the model. This
axiom corresponds to the wide intuition in mathematics that things
that are derivable, constructible (in some sense) from a given object
that is known to exist should also be considered to exist.

Let $\textit{DNR}=\{f\in\omega^\omega: \forall
e,f(e)\ne\Phi_e(e)\vee\Phi_e(e)\!\uparrow\}$,
$\textit{DNR}^{X}=\{f\in\omega^\omega: \forall
e,f(e)\ne\Phi_e^X(e)\vee\Phi_e^X(e)\!\uparrow\}$, and $g\textit{-DNR}=\{f\in\omega^\omega: f\in \textit{DNR}\wedge (\forall n)\ f(n)\leq g(n)\}$.
$\mathsf{DNR}$ as a second order arithmetic statement says: $\forall X\exists f\in \mathsf{DNR}^{X}$.
Hirschfeldt et al.\ \cite{hirschfeldt2008strength} Theorem 2.3 showed that $\mathsf{SRT_2^2}$ implies $\mathsf{DNR}$; they constructed a $\Delta_2^0$ set $A$ such that any infinite subset of $A$ or $\bar{A}$ computes a $\textit{DNR}$ function. It is also well known that
$\mathsf{WWKL}_0$ implies $\mathsf{DNR}$.
Combining this with Corollary \ref{cor1} yields:
\begin{corollary}[Ambos-Spies et al.\ \cite{ambos2004comparing}]
\label{cor5}
$\mathsf{DNR}$ is implied by $\mathsf{WWKL_0}$ but not versa vice.
\end{corollary}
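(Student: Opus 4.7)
The plan is to prove the two directions of the biconditional separately. The forward implication $\mathsf{WWKL_0}\Rightarrow\mathsf{DNR}$ is classical and I would only rehearse the standard argument: relativize $\mathsf{WWKL_0}$ to any set $X$ to obtain a Martin-L\"of $X$-random real $Y$ (as a path through a positive-measure $X$-computable tree complementing some level of the universal $X$-ML-test), then extract a $\textit{DNR}^X$ function $f$ by setting $f(e)$ to be the code of $Y\upharpoonright 2e$. Were $f(e)=\Phi_e^X(e)$ for some $e$, the sets $U_n=\{Z:(\exists e\ge n)\,Z\upharpoonright 2e=\Phi_e^X(e)\}$ would form a uniform $\Sigma^{0,X}_1$ test of measure at most $\sum_{e\ge n}2^{-2e}$ that catches $Y$, contradicting its randomness.

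The reverse direction $\mathsf{DNR}\not\Rightarrow\mathsf{WWKL_0}$ is where the content of the paper pays off. By the just-proved Corollary \ref{cor1}, there is an $\omega$-model $\mathcal{M}$ of $\mathsf{RCA_0}+\mathsf{RT_2^2}$ in which $\mathsf{WWKL_0}$ fails. I would then invoke the result of Hirschfeldt et al.\ (Theorem 2.3 in the reference cited above) that $\mathsf{SRT_2^2}$ implies $\mathsf{DNR}$ over $\mathsf{RCA_0}$, combined with the trivial $\mathsf{RT_2^2}\Rightarrow\mathsf{SRT_2^2}$, to conclude $\mathcal{M}\models\mathsf{DNR}$. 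Hence $\mathcal{M}$ separates $\mathsf{DNR}$ from $\mathsf{WWKL_0}$, which is exactly what is required.

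No real obstacle is anticipated: once Corollary \ref{cor1} is in hand, the non-implication reduces to chaining the sequence $\mathsf{RT_2^2}\Rightarrow\mathsf{SRT_2^2}\Rightarrow\mathsf{DNR}$ against the new separation furnished by that corollary, and the positive direction is folklore. The only step one might wish to spell out is the randomness-to-$\textit{DNR}$ extraction, but as this is standard I would leave it as a one-line reminder.
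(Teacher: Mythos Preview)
Your proposal is correct and matches the paper's own argument: the paper simply notes that $\mathsf{WWKL_0}\Rightarrow\mathsf{DNR}$ is well known and then obtains the non-implication by chaining $\mathsf{RT_2^2}\Rightarrow\mathsf{SRT_2^2}\Rightarrow\mathsf{DNR}$ against Corollary~\ref{cor1}, exactly as you do. Your added sketch of the randomness-to-$\textit{DNR}$ extraction is a harmless elaboration of what the paper leaves as folklore.
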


Actually, Corollary \ref{cor1} yields more. For definitions of basic
notions from algorithmic randomness and dimension used below, see
\cite{downey2008algorithmic} or \cite{nies2009computability}.
Lutz \cite{lutz2000gales}
first studied the effective version of Hausdorff dimension. Later Mayordomo
\cite{mayordomo2002kolmogorov} and Ryabko \cite{ryabko1984coding} gave a
characterization using Kolmogorov complexity. Earlier than them, some results
of Levin, Cai, Hartmanis and Staiger also indicate the deep relationship between
Hausdorff dimension and Kolmogorov complexity.
\begin{definition}
$\dim(A)=\liminf_{n\rightarrow \infty} \dfrac{K(A\upharpoonright n)}{n}$.
\end{definition}
Clearly for any constant $0< d\leq 1$, $T_d=\{\rho\in2^{<\omega}: \dfrac{K_U(\rho)}{|\rho|}\geq d\}$ is a $\Pi_1^0$ set
that does not admit a computable strong constant-bound-enumeration
 (the proof proceeds exactly the same as Lemma \ref{lem1}),
 therefore the induced $\Pi_1^0$ class satisfies the conditions
 given in Theorem \ref{th1}.

 The following corollary related to algorithmic randomness theory
 answers a question of Joe Miller.
\begin{corollary}
\label{cor4}
There exists a $\textit{DNR}$ function that does not compute
\emph{any} binary sequence with positive effective Hausdorff dimension.
\end{corollary}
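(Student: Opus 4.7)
The plan is to apply Theorem~\ref{th1} in tandem with the result of Hirschfeldt et al.\ \cite{hirschfeldt2008strength} already cited above, which produces a $\Delta_2^0$ set $A$ such that every infinite subset of $A$ or $\bar A$ computes a \textit{DNR} function. Given such $A$, it will suffice to produce an infinite $G$ contained in $A$ or $\bar A$ which computes no binary sequence of positive effective Hausdorff dimension, since then the \textit{DNR} function $f\leq_T G$ delivered by Hirschfeldt's construction inherits this property and witnesses the corollary.

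A useful preliminary observation is that if $\dim(X) > 0$, then one can always find a rational $d > 0$ so small that $K_U(X\upharpoonright n) \geq d n$ holds for \emph{every} $n$, not merely cofinitely many: the tail bound comes from the definition of $\liminf$, and the finitely many small-$n$ values contribute only a bounded shift in $d$. Hence every positive-dimension $X$ is a path through the $\Pi_1^0$ tree $\hat T_d = \{\rho : (\forall \sigma\preceq\rho)\,K_U(\sigma) \geq d|\sigma|\}$ for some $d = 1/k$, and by the remark preceding the corollary each $\hat T_{1/k}$ admits no computable strong constant-bound-enumeration.

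Since Theorem~\ref{th1} handles a single closed set at a time, the next step is to amalgamate the countable family $\{\hat T_{1/k}\}_{k\geq 1}$ into a single tree $Q^* \subseteq 2^{<\omega}$ such that (i) each node of $Q^*$ is extendible to a path, (ii) any strong constant-bound-enumeration of $Q^*$ uniformly yields a strong constant-bound-enumeration of each $\hat T_{1/k}$, and hence (iii) $Q^*$ itself admits no computable strong constant-bound-enumeration. A natural candidate is an effective disjoint union that places a copy of $\hat T_{1/k}$ above an antichain tag $c_k$; the construction must be arranged carefully so that the shared prefix structure of the tags does not create a computable spurious branch through $Q^*$ which would supply a computable strong $1$-enumeration of it.

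With such a $Q^*$ in hand, apply Theorem~\ref{th1} with $\mathcal{M} = \{\text{computable sets}\}$, the set $A$ above, and $Q = Q^*$, obtaining an infinite $G \subseteq A$ or $G \subseteq \bar A$ such that $G$ computes no strong constant-bound-enumeration of $Q^*$. By (ii) together with the final clause of Theorem~\ref{th1}, $G$ then computes no path of any $[\hat T_{1/k}]$, hence by the preliminary observation no sequence of positive effective Hausdorff dimension; the \textit{DNR} function $f\leq_T G$ furnished by Hirschfeldt's theorem then witnesses the corollary. I expect the amalgamation step to be the main obstacle, since one must simultaneously guarantee extendibility of every node, the uniform decomposition in (ii), and the absence of any computable branch through $Q^*$.
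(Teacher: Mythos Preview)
Your overall plan matches the paper exactly: take the Hirschfeldt et al.\ $\Delta^0_2$ set $A$, build an infinite $G\subseteq A$ or $G\subseteq\bar A$ that computes no positive-dimension sequence, and pass to a \textit{DNR} $f\le_T G$. Your preliminary observation (any $X$ of positive dimension lies in some $[\hat T_{1/k}]$) is also correct and is what the paper uses.

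The divergence, and the genuine gap, is your single-tree amalgamation. The obstacle you flag is real and, for the natural tag choices, fatal: any infinite antichain $\{c_k\}\subseteq 2^{<\omega}$ has an infinite prefix-closure, so by K\"onig's Lemma $[Q^*]$ acquires at least one extra ``spine'' path that is a limit of the tags. With the obvious choices (e.g.\ $c_k=0^{k-1}1$) that spine path is computable, whence $n\mapsto\{0^n\}$ is a computable strong $1$-enumeration of $Q^*$ and Theorem~\ref{th1} does not apply. One could try to choose the tags so that the spine tree itself has no computable strong constant-bound-enumeration, but that grafts a second nontrivial construction onto the proof.

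The paper avoids this entirely. Rather than one infinite amalgamation, it uses the remark preceding the proof of Corollary~\ref{cor1-2}: the forcing behind Theorem~\ref{th1} can be run against countably many closed sets simultaneously, provided every \emph{finite} join $\bigvee_{i\le n}Q_i$ still lacks an $\mathcal{M}$-computable strong constant-bound-enumeration. Finite joins use a finite antichain of tags, so no spine path appears, and the hypothesis for $\bigvee_{i\le n}\hat T_{1/k_i}$ follows by the same argument as Lemma~\ref{lem1}. Operationally one just dovetails: each requirement $R(e;j)$ carries an extra index specifying which $Q_k$ it protects, and the R-i/R-ii operations are carried out with $Q$ replaced by the relevant $Q_k$. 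That is precisely the paper's proof of Corollary~\ref{cor4}: choose $d_n\to 0$, apply the adapted Theorem~\ref{th1} to avoid all $[T_{d_n}]$ simultaneously, and finish as you do. So the fix to your plan is to drop the global $Q^*$ and invoke the adapted construction directly.
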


Another interesting application in algorithmic randomness theory
is the following.
\begin{corollary}[Kjos-Hanssen \cite{kjos2009infinite}]
For every Martin-L\"{o}f random set $A\in 2^\omega$, there exists an infinite set $G\subseteq A\vee G\subseteq \bar{A}$ such that $G$ does not compute any Martin-L\"{o}f random set.
\end{corollary}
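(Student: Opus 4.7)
The plan is to apply Theorem~\ref{th1} with $\mathcal{M}=\{\emptyset\}$ (trivially closed under disjoint union) and closed set $\mathcal{Q}=[T_d]$ for some fixed rational $d\in(0,1)$, where $T_d=\{\rho\in 2^{<\omega}:K_U(\rho)\geq d|\rho|\}$ is the $\Pi_1^0$ set introduced in the remark just above. Since $T_d$ admits no computable strong constant-bound-enumeration, the hypotheses of Theorem~\ref{th1} are met; applied to the given Martin-L\"of random set $A$, the theorem supplies an infinite $G\subseteq A$ or $G\subseteq\bar{A}$ that computes no strong constant-bound-enumeration of $T_d$.

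I would then derive the conclusion by showing that any Martin-L\"of random $Y\leq_T G$ would force $G$ to compute such an enumeration, contradicting the output of Theorem~\ref{th1}. So suppose $Y\leq_T G$ is Martin-L\"of random; fix $c$ with $K_U(Y\upharpoonright n)\geq n-c$ for all $n$, and set $N_0=\lceil c/(1-d)\rceil$. Then for every $n\geq N_0$ we have $K_U(Y\upharpoonright n)\geq n-c\geq dn$, so $Y\upharpoonright n\in T_d$. Define $h\in\omega^\omega$ by letting $h(n)$ be the canonical index of $\{Y\upharpoonright n\}$ when $n\geq N_0$ and the canonical index of $2^n$ (all length-$n$ strings) when $n<N_0$. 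Then $|D_{h(n)}|\leq 2^{N_0}$ uniformly, and $D_{h(n)}\cap (T_d)_n\neq\emptyset$ for every $n$: when $n\geq N_0$ via $Y\upharpoonright n$, and when $n<N_0$ because $(T_d)_n$ is nonempty (it contains any incompressible length-$n$ string) and $D_{h(n)}=2^n\supseteq (T_d)_n$. Hard-coding $N_0$ into the Turing program and reading $Y\upharpoonright n$ off the oracle $G$ gives $h\leq_T G$, so we obtain a strong $2^{N_0}$-enumeration of $T_d$ computable from $G$, yielding the desired contradiction.

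The main obstacle is that a Martin-L\"of random $Y$ need not itself be a path of $[T_d]$, because its short initial segments may fail $K_U(\rho)/|\rho|\geq d$. This is precisely why we exploit the flexibility of strong \emph{constant-bound}-enumeration rather than asking $G$ to compute a member of $[T_d]$: the finitely many offending short levels are absorbed into the bound $k=2^{N_0}$. The value of $N_0$ depends on the unknown randomness deficiency $c$ of $Y$, but since any $N_0\geq c/(1-d)$ will do and such an $N_0$ certainly exists, the mere existence of \emph{some} Turing functional from $G$ producing a valid strong $k$-enumeration is all we need to trigger the conclusion of Theorem~\ref{th1}.
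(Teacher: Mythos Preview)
Your argument is correct and takes a somewhat different route from the paper's own proof. The paper invokes the adaptation of Theorem~\ref{th1} to \emph{countably many} closed sets, applying it simultaneously to the family $[T_c]=\{X:\forall n\ K_U(X\upharpoonright n)\geq n-c\}$ for all $c\in\omega$; since every Martin-L\"of random real lies in $[T_c]$ for \emph{some} $c$, a $G$ that computes no strong constant-bound-enumeration of any $T_c$ in particular computes no Martin-L\"of random. You instead stay with Theorem~\ref{th1} exactly as stated for a single closed set, choosing one $T_d$ with $d<1$ and absorbing the finitely many low-complexity prefixes of a hypothetical random $Y\leq_T G$ into the enumeration bound $k=2^{N_0}$. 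Your route thus avoids the (only sketched) countable-family extension at the price of the short explicit calculation with $N_0$; the paper's route is more direct once that extension is granted. Both are valid, and your observation that the \emph{enumeration} conclusion of Theorem~\ref{th1}, rather than mere failure to compute a path through $[T_d]$, is what makes a single $d<1$ suffice is a nice point.
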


In Section \ref{sec3} we illustrate the basic ideas and demonstrate
the vital part of the construction in an informal style for step 1 and
step $s$,  each parallel with the other. The frame of the proof,
``tree-forcing'', together with a review of Mathias forcing, is given
in Subsection \ref{subsec2}. Definitions that simplify statements and
needed notions are given in Section \ref{sec2}, and basic facts
concerned are given in Section \ref{sec3}. In Section \ref{sec4} we
first define some operations used in the construction which are basic
processes in the construction, then in Subsection \ref{subsec6} give
a concrete construction stated via operations introduced in Subsection
\ref{subsec5}, and in Subsection \ref{subsec7} prove that the
construction does provide the desired set. Section \ref{sec5} gives some
applications of this result. Section \ref{sec6} briefly discusses
generalizations of this result and proposes related questions.

\section{Preliminaries}
\label{sec2}
$\Psi$ is sometimes not a single Turing functional but a pair of
Turing functionals, $\langle\Psi_l,\Psi_r\rangle$. $\vec{\Psi}$
denotes an array of pairs of Turing functionals. $\vec{\Psi_i}$
denotes the $i^{\text{th}}$ component. Similarly $\rho$ is sometimes not a
single string but a pair, $\vec{\rho}$ a sequence of pairs,
$\vec{\rho_{i}}=\langle\rho_{il},\rho_{ir}\rangle$. $\Psi^\rho$ means
$\langle\Psi_l^{\rho_l},\Psi_r^{\rho_r}\rangle$,
$\vec{\Psi}^{\vec{\rho}}$ means
$(\langle\Psi_{1l}^{\rho_{1l}},\Psi_{1r}^{\rho_{1r}}\rangle,\langle\Psi_{2l}^{\rho_{2l}},\Psi_{2r}^{\rho_{2r}}\rangle,\ldots)$.

$V$ denotes a clopen set of $2^\omega$; we equate $V$ with
a finite set of strings, i.e.\
$[V]^{\preceq}=\bigcup_{i=1}^n[\rho_i]^{\preceq}$, where
$\rho_1,\rho_2,\ldots, \rho_n$ are mutually incompatible. Write $\height(V)$
for $\max\{|\rho|:\rho\in V\}$. For a closed set $\mathcal{Q}$ (of
$2^\omega$) identify $\bar{\mathcal{Q}}$ with $\{\rho:[\rho]
\subseteq\bar{\mathcal{Q}}\}$ so that $\rho\in\bar{\mathcal{Q}}$
makes sense.

Let $\set(\rho)$ denote $\{i\in\omega:\rho(i)=1\}$, and let
$\rho/\sigma$
($Z/\sigma$) be $\rho$ ($Z$) with the first $|\sigma|$ many values
replaced by $\sigma$.

For a set $X$, view $X$ as an infinite binary string, and let
$\pi_i^n(X)=X_i$ where $X_i\in 2^\omega$ is such that
$X_i(j)=X((j-1)n+i)$, i.e.\ $X=\bigoplus_{i=1}^n X_i$. $\pi$
can be defined on finite strings in the same way.

We say $X$ codes an ordered $k$-partition of $W$ iff
$\bigcup_{i=1}^k\pi_i^k(X)=W$,
and say that $\pi_i^k(X)$ is the
$i^{\text{th}}$ part of this partition. A tree $T\subseteq 2^{<\omega}$ is an
ordered $k$-partition tree of $W$ iff $\forall X\in[T]$ $X$ is an
ordered $k$-partition of $W$. Note that in this paper, ``partition''
does not mean piecewise disjoint partition.

\begin{definition}
\label{def2}
For $i=1,2,\ldots, u$, let $\mathcal{K}_i=\{K_{i,1},K_{i,2},\ldots,
K_{i,m_i}\}$, where each $K_{i,j}$ is a subset of $\{1,2,\ldots, n\}$,
and let $\mathcal{\vec{K}}=(\mathcal{K}_1,\mathcal{K}_2,\ldots,
\mathcal{K}_u)$. We call $\mathcal{\vec{K}}$ a $u$-supporter of
$\{1,2,\ldots, n\}$ iff for every ordered $u$-partition
(not necessarily pairwise disjoint) of $\{1,2,\ldots, n\}$,
namely $P^{(1)}\cup P^{(2)}\cup \cdots \cup P^{(u)}=\{1,2,\ldots, n\}$, there exists some $\mathcal{K}_i$ and some
$K_{i,j}\in\mathcal{K}_i$ such that $K_{i,j}\subseteq P^{(i)}$.

A sequence of $n$ clopen sets $V^{(1)},V^{(2)},\ldots, V^{(n)}$ is
$u$-disperse iff for any ordered $u$-partition (not
necessarily pairwise disjoint) of $\{1,2,\ldots, n\}$, $P^{(1)}\cup
P^{(2)} \cup \cdots \cup P^{(u)}=\{1,2,\ldots, n\}$, there
exists $i\leq u$ such that $\bigcap_{j\in
P^{(i)}}[V^{(j)}]^\preceq=\emptyset$.
\end{definition}

\begin{definition}
For $n$ many ordered $u$-partitions $X^{(1)},\ldots, X^{(n)}$, and for
$\vec{\mathcal{K}}=\{\mathcal{K}_1,\mathcal{K}_2,\ldots,
\mathcal{K}_u\}$, where each $\mathcal{K}_i$ is a finite class of
finite subsets of $\{1,2,\ldots, n\}$, whose members are denoted by $K_{i,j}$, let
\begin{multline*}
\textit{Cross}(X^{(1)},X^{(2)},\ldots, X^{(n)};\mathcal{\vec{K}})= Y=\\
(\bigoplus_{1\leq j\leq
|\mathcal{K}_1|}Y_{K_{1,j}})\oplus(\bigoplus_{1\leq j\leq
|\mathcal{K}_2|} Y_{K_{2,j}})\oplus\cdots
\oplus (\bigoplus_{1\leq j\leq |\mathcal{K}_u|}
Y_{K_{u,j}}),
\end{multline*}
where $Y_{K_{i,j}}=\bigcap_{p\in K_{i,j}\in\mathcal{K}_i}\pi_i^u(X^{(p)})$,
i.e.\ $Y_{K_{i,j}}$ is the intersection of the $i^{\text{th}}$ parts of those
$u$-partitions $X^{(p)}$ for which $p\in K_{i,j}$.
The order of each $Y_{K_{i,j}}$ in $Y$ ``does not matter''; the point is that
given $Y$,$i,j$ and $\mathcal{\vec{K}}$ one could
uniformly (in $Y$,$i,j$ and $\mathcal{\vec{K}}$) compute each $Y_{K_{i,j}}$.

If each
$\mathcal{K}_i$ consists of
all of the $m$-element subsets of $\{1,2,\ldots, n\}$, we also abbreviate
$\textit{Cross}(X^{(1)},X^{(2)},\ldots, X^{(n)};\mathcal{\vec{K}})$ by
$\textit{Cross}(X^{(1)},X^{(2)},\ldots, X^{(n)};m)$.
For $n$ classes $\mathcal{C}^{(1)},\mathcal{C}^{(2)},\ldots,
\mathcal{C}^{(n)}$ of ordered $u$-partitions,
\begin{multline*}
\textit{Cross}(\mathcal{C}^{(1)},\mathcal{C}^{(2)},\ldots,
\mathcal{C}^{(n)};\mathcal{\vec{K}})=\{Y\in 2^\omega:\exists
X^{(i)}\in \mathcal{C}^{(i)} \textrm{ for }1\leq i\leq n, \\
Y=\textit{Cross}(X^{(1)},\ldots, X^{(n)};\mathcal{\vec{K}})\}.
\end{multline*}
Note that the operation $\textit{Cross}$ can be defined on binary
strings in a natural way; therefore if $T^{(1)},\ldots, T^{(n)}$ are
computable trees, then $\textit{Cross}([T^{(1)}],\ldots,
[T^{(n)}];\mathcal{\vec{K}})$ is a $\Pi_1^0$ class. Let
$\textit{Cross}(T^{(1)},\ldots, T^{(n)};\mathcal{\vec{K}})$ denote the
corresponding computable tree.
\end{definition}

Fixing a bijection from $\omega$ to representations of finite subsets
of $2^{<\omega}$, for any $X$, we view each function $\Psi_e^X$ as enumeration,
identifying $\Psi_e(n)$ with the finite subset
of $2^{<\omega}$ it represents, so that
$|\Psi_e(n)|$,$\Psi_e(n)\cap
X,[\Psi_e(n)]^\preceq=\bigcup_{i=1}^{|\Psi_e(n)|}[\rho_i]^{\preceq}$,
etc.\ make sense (if $\Psi_e(n)\!\uparrow$ we view $\Psi_e(n)$ as the
empty set).

\section{Basic ideas and some steps}
\label{sec3}
We will construct a set $G$ with $G\subseteq A\vee G\subseteq
\bar{A}$, satisfying for all $e,j\in\mathbb{N}$:
\medskip

\begin{itemize}
\item $R(e;j)$: $\Psi_e^{C^{(j)}\oplus G}$ is not a strong
  $e$-enumeration of $Q$. I.e.,
  $\Psi_e^{C^{(j)}\oplus G}$ is not total, or $(\exists
  n)(|\Psi_e^{C^{(j)}\oplus G}(n)|>e\vee \Psi_e^{C^{(j)}\oplus
    G}(n)\cap Q=\emptyset)$, or $(\exists l)(\forall n)(\exists\rho\in \Psi_e^{C^{(j)}\oplus G}(n))
    |\rho| < l$.

\item $P_e$: $|G|\geq e$.
\end{itemize}
\medskip

\noindent
\subsection{\textbf{STEP 1}: Satisfying $\mathbf{R(1;j)}$}
\label{subsec1}
\noindent

\textbf{Case i}:
 If we \emph{can} find some $\rho\in 2^{<\omega}$ such that
 $\set(\rho)\subset A\vee \set(\rho)\subset\bar{A}$
 and some $n$ such that $\Psi_1^{C^{(j)}\oplus
   \rho}(n)\!\downarrow\cap Q=\emptyset\vee |\Psi_1^{C^{(j)}\oplus
   \rho}(n)|>1$, then we can satisfy $R(1;j)$ by finitely extending our
   initial segment requirement to $\rho$. In this case we say case i occurs to $R(1;j)$.

\textbf{Case ii}: If there is no such $\rho$, i.e.\ case i doesn't
occur to $R(1;j)$. Then we make $\Psi_1^{C^{(j)}\oplus G}$ non-total;
we can achieve this by \emph{trying} to find three disjoint clopen
sets $[V^{(1)}]^\preceq,[V^{(2)}]^\preceq,[V^{(3)}]^\preceq$ such that
the following classes are non-empty:
\begin{multline*}
[T_{V^{(i)}}]=\{X=X_l\oplus X_r: X_l\cup X_r=\omega \wedge[(\forall Z
  \textrm{ s.t.\ } Z \subseteq X_l\vee Z\subseteq X_r)(\forall n)\\
\Psi_1^{C^{(j)}\oplus Z}(n)\!\downarrow\rightarrow(|\Psi_1^{C^{(j)}\oplus Z} (n)|\leq 1\wedge[\Psi_1^{C^{(j)}\oplus Z} (n)]^\preceq\cap [V^{(i)}]^\preceq\ne \emptyset)]\}.
\end{multline*}

Let
\[
[T_1]=\textit{Cross}([T_{V^{(1)}}],[T_{V^{(2)}}],[T_{V^{(3)}}]; 2),
\]
i.e.\ $\forall X\in [T_1],\ X=Y_1\oplus Y_2\oplus Y_3\oplus Y_4\oplus
Y_5\oplus Y_6$, where
\begin{eqnarray*}
Y_1=\pi_l^2(X')\cap\pi_l^2(X'')& \textrm{for some} &X'\in [T_{V^{(1)}}],X''\in [T_{V^{(2)}}]\\
Y_2=\pi_l^2(X')\cap\pi_l^2(X'')& \textrm{for some} &X'\in [T_{V^{(2)}}],X''\in [T_{V^{(3)}}]\\
Y_3=\pi_l^2(X')\cap\pi_l^2(X'')& \textrm{for some} &X'\in [T_{V^{(3)}}],X''\in [T_{V^{(1)}}]\\
Y_4=\pi_r^2(X')\cap\pi_r^2(X'')& \textrm{for some} &X'\in [T_{V^{(1)}}],X''\in [T_{V^{(2)}}]\\
Y_5=\pi_r^2(X')\cap\pi_r^2(X'')& \textrm{for some} &X'\in [T_{V^{(2)}}],X''\in [T_{V^{(3)}}]\\
Y_6=\pi_r^2(X')\cap\pi_r^2(X'')& \textrm{for some} &X'\in [T_{V^{(3)}}],X''\in [T_{V^{(1)}}].\\
\end{eqnarray*}

Note:
\begin{enumerate}
\item $[T_{V^{(i)}}]$ is a $\Pi_1^{0,C^{(j)}}$ class. For any general
$\Psi$ (instead of $\Psi_1$) and $V$ (instead of $V^{(i)}$) the index of
$[T_{V}]$, which is induced by $\Psi, V$, can
  be uniformly computed from indices for $\Psi$ and $V$.
\item If $[V]^\preceq\supseteq [Q]$ then $T_{V}\ne\emptyset$, as
  otherwise case i occurs: For $X_1=A$ and $X_2=\bar{A}$, if we assume
  that $X_1\oplus
  X_2\notin T_{V}$ then there are a $\rho$ such that
  $\set(\rho)\subseteq A\vee \set(\rho)\subseteq
  \bar{A}$ and an $n\in\mathbb{N}$ with $[\Psi_1^{C^{(j)}\oplus
      \rho}(n)\!\downarrow]^\preceq \cap
      [V]^\preceq=\emptyset\vee|\Psi_1^{C^{(j)}\oplus\rho}(n)|>1$,
      which implies that $[\Psi_1^{C^{(j)}\oplus \rho}(n)]^\preceq\cap
      Q=\emptyset\vee |\Psi_1^{C^{(j)}\oplus\rho}(n)|>1$.
\item For any $i\leq 6, G\subseteq Y_i$,
$\Psi_1^{C^{(j)}\oplus G}$ is not a strong
1-enumeration of $Q$.
To see this, suppose on the contrary that for some $G\subseteq Y_1$,
$\Psi_1^{C^{(j)}\oplus G}$ is a strong
1-enumeration of $Q$. Then there exists
$n$ such that $\forall \rho\in\Psi_1^{C^{(j)}\oplus
  G}(n),|\rho|=n >\max\{\height(V^{(1)}),\height(V^{(2)})\}$. Then $\forall \rho\in
\Psi_1^{C^{(j)}\oplus G}(n)$ either $[\rho]^\preceq\cap[
  V^{(1)}]^\preceq=\emptyset$ or $[\rho]^\preceq\cap[
  V^{(2)}]^\preceq=\emptyset$ (since $[V^{(1)}],[V^{(2)}]$ are
disjoint), say $[\rho]^\preceq\cap[ V^{(1)}]^\preceq=\emptyset$, which
implies $[\Psi_1^{C^{(j)}\oplus G}(n)]\cap
[V^{(1)}]^\preceq=\emptyset$ (note that at this step there is at most
one $\rho\in\Psi_1^{C^{(j)}\oplus G}(n)$). But $G\subseteq
Y_1\subseteq \pi_l^2(X')$ for some $X'\in [T_{V^{(1)}}]$, and by the definition of
$[T_{V^{(1)}}]$, $\forall n,[\Psi_1^{C^{(j)}\oplus G}(n)]^\preceq\cap
[V^{(1)}]^\preceq\ne\emptyset$ if $\Psi_1^{C^{(j)}\oplus
  G}(n)\!\downarrow$.
\item $[T_1]$ is a $\Pi_1^{0,C^{(j)}}$ class (note that
  $\textit{Cross}$ can be applied to binary strings), whose index can
  be $C^{(j)}$-computed from indices for $\Psi$, $V^{(1)}$, $V^{(2)}$,
  and $V^{(3)}$.
\item $\bigcup_{i=1}^6 Y_i=\omega$ (see Fact \ref{fac5}; this is just the
  pigeonhole principle). This is why we try to choose \emph{three}
  mutually disjoint clopen sets at \emph{this} step, i.e.\ if we
  choose, say, two disjoint clopen sets, then the union of all parts of
  some path through $T_1$ might be finite, which leads to difficulties
  at the
  next step of the construction. The advantage of requiring $T$ to be an
  ordered-partition-tree will be clear in Lemma \ref{fac7}. In general,
  such three mutually disjoint clopen sets can't be found, so instead
  we \emph{try} to find a $2$-disperse sequence of clopen sets for some
  $2$-supporter at \emph{this} step, while at later steps $2$-disperse
  sequences will not be enough, and will have to be replaced by
  $k'$-disperse sequences.
\end{enumerate}
\noindent
\emph{\textbf{How the requirement R(1;j) is satisfied}}: If case i occurs,
 we require that if finally $G\subseteq A(\bar{A})\wedge set(\rho)\subset A(\bar{A})$
 then $G\supset \rho$. If case ii occurs, we require that
  for some path $X\in [T_1]$, $G$ will be contained in some $Y_i$ such that $Y_i=\pi_i^6(X)$.
  By the above notes clearly $R(1;j)$ is satisfied. Furthermore, Lemma \ref{fac8} will prove that if
  no such $2$-disperse sequence exists such
  that all induced classes are non-empty (i.e.\ case ii does not occur),
  then case i occurs,  else one can compute a strong $k$-enumeration of $Q$.


\subsection{Tree forcing}
\label{subsec2}
A Mathias forcing condition is a pair $(\rho,L)$ where $\rho\in
2^{<\omega}$ and $L\in 2^\omega$. We write $(\rho,L)\geq(\rho',L')$
and say that $(\rho',L')$ extends $(\rho,L)$
iff $\rho'\supset\rho\wedge L'\cup \set(\rho')\subseteq
L\cup \set(\rho)$. A set $G$ satisfies $(\rho,L)$ iff $G\supset \rho\wedge G\subseteq \set(\rho)\cup L$.

In this paper, a tree forcing condition is a triple
$(\vec{\rho},T,k)$, where $T\subseteq 2^{<\omega}$ is a tree and
$\vec{\rho}=(\langle\rho_{1l},\rho_{1r}\rangle,\langle\rho_{2l},\rho_{2r}\rangle,\ldots,
\langle\rho_{kl},\rho_{kr}\rangle)$ is an array of $k$ pairs of binary strings, such that:
\begin{enumerate}
\item each $X\in [T]$ together with $\vec{\rho}$ codes in a uniform
  way $2k$ Mathias forcing conditions,
  $\langle\rho_{1l},\pi_1^k(X)\rangle,\langle\rho_{1r},\pi_1^k(X)\rangle,\ldots,
  \langle\rho_{kl},\pi_k^k(X)\rangle,\langle\rho_{kr},\pi_k^k(X)\rangle$,
  i.e., there is \emph{one} initial segment requirement pair for \emph{each} part.

\item $\bigcup_{i=1}^k \pi_i^k(X)=^*\omega$ (i.e.\ $\bigcup_{i=1}^k \pi_i^k(X)$
is $\omega$ minus some finite set).
\item $(\forall i\leq k,X\in [T])\ \pi_i^k(X)\cap\{1,2,\ldots,\max\{|\rho_{il}|,|\rho_{ir}|\}\}=\emptyset$.
\item $\set(\rho_{il})\subseteq A\wedge \set(\rho_{ir})\subseteq\bar{A}$
where $A\in2^\omega$ is the given set in Theorem \ref{th1}.
\end{enumerate}
\medskip

A set $G$ satisfies $(\vec{\rho},T,k)$ iff there exist $X\in [T]$ and
$i\leq k$ such that $G$ satisfies either $\langle\rho_{il},\pi_i^k(X)\rangle$ or
$\langle\rho_{ir},\pi_i^k(X)\rangle$.
$(\vec{\rho}\ ',T',k')$ extends $(\vec{\rho},T,k)$, written as
$(\vec{\rho}\ ',T',k')\leq(\vec{\rho},T,k)$, iff there exists a
function $p:\{1,2,\ldots, k'\}\rightarrow \{1,2,\ldots, k\}$ such that
\begin{enumerate}
\item $\forall i\leq k'\ \rho_{il}'\supset \rho_{p(i),l}\wedge \rho_{ir}'\supset\rho_{p(i),r}$
 \item $\forall i\leq k'\ \forall X'\in [T']\exists X\in [T]\,
   \set(\rho_{il}')\cup\pi_i^{k'}(X')\subseteq
   \set(\rho_{p(i),l})\cup\pi_{p(i)}^k(X)$ and similarly for
   the right-hand side. We call part $i$ of $T'$ a child part of part $p(i)$ of $T$.
\end{enumerate}
\medskip

We will construct a sequence
$(\vec{\rho}^{\ 1},T_{1},k_1)\geq(\vec{\rho}^{\ 2},T_{2},k_2)
\geq\cdots \geq(\vec{\rho}^{\ s},T_{s},k_s)\geq\cdots$ of tree forcing
conditions. Each part $i\leq k_s$ of $(\vec{\rho}^{\ s},T_s,k_s)$ will
correspond to a progress information sequence
$(j;e^{s,j}_{i,r},e^{s,j}_{i,l})$ for $j\in\mathbb{N}$, which means
that for all $i\leq k_s$ and $X\in[T_s]$, if $G$ satisfies
$\langle\rho^s_{il},\pi_i^{k_s}(X)\rangle$ then $G$ satisfies $R(e;j)$
for all $j\in\mathbb{N}$ and $e< e^{s,j}_{i,l}$, and if $G$ satisfies
$\langle\rho^s_{ir},\pi_i^{k_s}(X)\rangle$ then $G$ satisfies $R(e;j)$
for all $j\in\mathbb{N}$  and $e< e^{s,j}_{i,r}$. Furthermore, we will
have $[T_s]\ne\emptyset$, and each $T_s$ will be computable in some
$C\in\mathcal{M}$ where $\mathcal{M}$ is the given countable class in
Theorem \ref{th1}.

Let $C^{(j)}$ for $j \in \mathbb N$ be a sequence of sets of
$\mathcal{M}$ that is cofinal
in $\mathcal{M}$, i.e.\ $(\forall
j\in\mathbb{N})[C^{(j)}\in\mathcal{M}\wedge (\forall C\in
\mathcal{M}\exists C^{(j)},\ C\leq_T C^{(j)})]$, and such that
$\forall j\in\mathbb{N},\ C^{(j)}\leq_T C^{(j+1)}$. Fix a sequence
$C^{t_s}$ for $s \in \mathbb N$ of
elements of $\{C^{(j)}\}_{j\in\mathbb{N}}$ such that each $C^{(j)}$
appears infinitely often in this sequence,
i.e.\ $\exists^{\infty}s,t_s=j$. Step $s$ will be devoted to
$R(\cdots;t_s)$, which means we make sure that if part $i'$ of $T_s$ has a
child in $T_{s+1}$ then on each child of part $i'$, for example part
$i$ of $T_{s+1}$, for $j=t_s$ either $e^{s,j}_{ir}>e^{s-1,j}_{i'r}$ or
$e^{s,j}_{il}>e^{s-1,j}_{i'l}$.

To avoid too many indices we use formal parameters
$T,\vec{\rho},\vec{\Psi}$. Their values are updated at each step, so
$\vec{\Psi}=\vec{e}^{\ s-1,j}$ means we assign $e^{s-1,j}_{il}$ to be
the index of $\Psi_{il}$, and writing $T=\{\rho\in
2^{<\omega}:\ldots\sigma\in T\ldots\}$ makes sense. Within each step,
and from one step to the next, progress information may also need to
be updated, so writing $e^{s-1,j}_{il}=e^{s-1,j}_{il}+1$ or
$e^{s,j}_{il}=e^{s-1,j}_{il}+1$ also makes sense.

\subsection{Some definitions and facts}
\label{subsec3}
Before we get a glimpse at step $s$, we make the following definition
in order to simplify our statements:

\begin{definition}
\label{def3}
\noindent
\begin{enumerate}
\item $\Psi^{C\oplus\rho}$ \emph{abandons} $V$ on a set $Y$ iff there
exist $Z\subseteq Y$ and $n\in\mathbb{N}$ such that either $\Psi^{C\oplus
Z/\rho}(n)\!\downarrow=D$ for a finite set $D$ with $[D]^\preceq\cap
[V]^\preceq =\emptyset$, or $|\Psi^{C\oplus Z/\rho}(n)|$ is greater
than the index of $\Psi$.
\medskip

\item $\langle\Psi_l^{C\oplus\rho_l},\Psi_r^{C\oplus\rho_r}\rangle$ (also denoted by $\Psi^{C\oplus\rho}$) abandons $V$ on $X_1\oplus X_2$ iff \emph{either} $\Psi_l^{C\oplus\rho_l}$ abandons $V$ on $X_1$ \emph{or} $\Psi_r^{C\oplus\rho_r}$ abandons $V$ on $X_2$.
\medskip

\item $\Psi^{C\oplus\rho}$ (which means
$\langle\Psi_l^{C\oplus\rho_l},\Psi_r^{C\oplus\rho_r}\rangle$)
\emph{abandons} $V$ on $X$ iff for \emph{all} ordered 2-partitions
$X_1\cup X_2=X$,
$\langle\Psi_l^{C\oplus\rho_l},\Psi_r^{C\oplus\rho_r}\rangle$ abandons
$V$ on $X_1\oplus X_2$. I.e.,
$\langle\Psi_l^{C\oplus\rho_l},\Psi_r^{C\oplus\rho_r}\rangle$ doesn't
abandon $V$ on $X$ iff there exists an ordered partition $X_1\cup
X_2=X$ such that
$\forall Y\supset \rho_l,Y\subseteq X_1\cup
\set(\rho_l) \Rightarrow  (\forall n)\ [(\Psi_l^{C\oplus
  Y}(n)\!\uparrow)\vee ([\Psi_l^{C\oplus Y}(n)]^\preceq\cap
       [V]^\preceq\neq\emptyset\wedge |\Psi_l^{C\oplus Y}(n)|\leq l)]$ and
$\forall Y\supset \rho_r,Y\subseteq X_2\cup
\set(\rho_r) \Rightarrow (\forall n) [(\Psi_r^{C\oplus
  Y}(n)\!\uparrow)\vee ([\Psi_r^{C\oplus Y}(n)]^\preceq\cap
       [V]^\preceq\ne\emptyset \wedge |\Psi_r^{C\oplus Y}(n)|\leq r)]$.
\medskip

\noindent
(In the above definitions, ``abandons on a set'' can be replaced by
``abandons on a finite string''; the definitions generalize
naturally.)
\medskip

\item $\vec{\Psi}^{C\oplus\vec{\rho}}$ abandons $V$ on
  $X=\bigoplus_{i=1}^k X_i$ iff $\exists i\leq k$ such that the
  pair $\Psi_i^{C\oplus\rho_i}$ abandons $V$ on $X_i$. I.e.,
  $\vec{\Psi}^{C\oplus\vec{\rho}}$ doesn't abandon $V$ on
  $X=\bigoplus_{i=1}^k X_i$ iff for all $i$, there exist
  $X_{il},X_{ir}$ with $X_{il}\cup X_{ir}=X_i$ such that $\Psi_{il}^{C\oplus\rho_{il}}$ doesn't abandon $V$ on $X_{il}$ and $\Psi_{ir}^{C\oplus \rho_{ir}}$ doesn't abandon $V$ on $X_{ir}$.

    We also say that $\vec{\Psi}^{C\oplus\vec{\rho}}$ doesn't abandon
    $V$ on $\bigoplus_{i=1}^k (X_{il}\oplus X_{ir})$ to indicate that
    for all $i$, $\Psi_{il}^{C\oplus\rho_{il}}$ doesn't abandon $V$ on $X_{il}$ and $\Psi_{ir}^{C\oplus \rho_{ir}}$ doesn't abandon $V$ on $X_{ir}$. (Note that this is a little abuse of notation but the lower index $il$,$ir$ shall avoid confusion.)
Let $\pi_{il}^{k}(X)=X_{il}$ and $\pi_{ir}^{k}(X)=X_{ir}$.
\end{enumerate}
\end{definition}
The following two simple facts illustrate the central idea of the construction.
\begin{fact}
\label{fac2}
Let $V^{(1)},V^{(2)},\ldots, V^{(m)}$ be an $e$-disperse sequence of
clopen sets and let $\Psi$ be a Turing functional such that for each
$i$, $\Psi$ doesn't abandon $V^{(i)}$ on
$X$. Then, for any $Y\subseteq X$, $\Psi^Y$ is not total or is
not a strong $e$-enumeration. (The proof also holds if $\Psi$ is relativized,
i.e. it also holds if for some oracle $C$ and some $\rho$
we replace $\Psi$ by $\Psi^{C\oplus\rho}$ and
$\Psi^Y$ by $\Psi^{C\oplus Y/\rho}$. )
\end{fact}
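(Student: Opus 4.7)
The plan is to argue by contradiction: suppose some $Y\subseteq X$ makes $\Psi^Y$ a total strong $e$-enumeration of $Q$. A defining feature of such an enumeration in the paper's sense (the negation of the third disjunct of $R(e;j)$) is that the lengths of the strings appearing in $\Psi^Y(n)$ are unbounded in a uniform way: for every $l$ there is some $n$ for which every $\rho\in\Psi^Y(n)$ satisfies $|\rho|\ge l$. I would use this with $l=\max_{i\le m}\height(V^{(i)})+1$ to fix a single stage $n$ at which every $\rho\in\Psi^Y(n)$ is strictly longer than every string in every $V^{(i)}$.

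Applying the non-abandonment hypothesis with $Z=Y\subseteq X$ then gives, for each $i\le m$, both $|\Psi^Y(n)|\le e$ and $[\Psi^Y(n)]^\preceq\cap [V^{(i)}]^\preceq\ne\emptyset$. The length control upgrades each non-empty intersection to an actual inclusion: if $\rho\in\Psi^Y(n)$ is compatible with some $\sigma\in V^{(i)}$, then $|\rho|>|\sigma|$ forces $\rho\supseteq\sigma$, and so $[\rho]^\preceq\subseteq [V^{(i)}]^\preceq$. Writing $\Psi^Y(n)=\{\rho_1,\ldots,\rho_u\}$ with $u\le e$, I can therefore pick for each $i\le m$ some index $j_i\le u$ with $[\rho_{j_i}]^\preceq\subseteq [V^{(i)}]^\preceq$.

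Setting $P^{(j)}=\{i\le m:j_i=j\}$ produces an ordered $u$-partition of $\{1,\ldots,m\}$ in which every non-empty part $P^{(j)}$ satisfies $\bigcap_{i\in P^{(j)}}[V^{(i)}]^\preceq\supseteq [\rho_j]^\preceq\ne\emptyset$. Padding up to an ordered $e$-partition by repeating one of the non-empty parts into the extra slots keeps every part with non-empty intersection, directly contradicting $e$-disperseness of $V^{(1)},\ldots,V^{(m)}$. The relativized variant goes through unchanged, since the argument uses only the input--output behavior of $\Psi$. The main delicate point I anticipate is the length bookkeeping of the very first step: without $n$ chosen large relative to the heights of the $V^{(i)}$, one cannot convert ``intersects'' into ``is contained in,'' and the pigeonhole against $e$-disperseness collapses. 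This is precisely the role played by the third disjunct of $R(e;j)$ in the paper's bookkeeping of strong enumerations of $Q$.
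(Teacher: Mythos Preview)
Your proposal is correct and matches the paper's own proof essentially step for step: argue by contradiction, pick $n$ with all strings in $\Psi^Y(n)$ longer than $\max_i\height(V^{(i)})$, use non-abandonment to get $|\Psi^Y(n)|\le e$ and $[\Psi^Y(n)]^\preceq\cap[V^{(i)}]^\preceq\ne\emptyset$ for every $i$, upgrade intersection to inclusion via the length bound, and then partition $\{1,\ldots,m\}$ according to which $\rho_j$ witnesses the inclusion to contradict $e$-disperseness. The only cosmetic difference is that the paper indexes the partition parts by the $\rho_j$'s and collects all $V^{(i)}$ compatible with $\rho_j$ (allowing overlap, which the paper's notion of partition permits), whereas you pick one $j_i$ per $i$; both produce the needed $e$-partition with all parts having nonempty intersection.
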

\begin{proof}
Here and below $\Psi$ abandoning $V$ means $\Psi^{\emptyset\oplus\varepsilon}$
abandoning $V$, where $\varepsilon$ is the empty string.
Suppose not. Then there is some $n$ and some $Y\subseteq X$ such that $|\Psi^Y(n)|\leq
e$ and $$\forall
\rho\in\Psi^Y(n),\ |\rho|>\max\{\height(V^{(1)}),\height(V^{(2)}),\ldots,
\height(V^{(m)})\},$$ and furthermore for each $i\leq m$, $[V^{(i)}]^\preceq\cap
[\Psi^Y(n)]^\preceq\ne\emptyset$. For $1\leq i\leq e$, let
$P^{(i)}=\{V^{(j)}: [V^{(j)}]^\preceq\cap[\rho_i]^\preceq\ne\emptyset \textrm{,
  where } \rho_i \textrm{ is the } i^{\text{th}} \textrm{ string in }
\Psi^Y(n)\}$ (let $P^{(i)}=P^{(1)}$ if $i>|\Psi_e^Y(n)|$). Note that since
$|\rho_i| > \height{V^{(r)}}$, we have that $[\rho_i]^\preceq\cap [V^{(r)}]^{\preceq}\ne\emptyset$
implies $[\rho_i]^\preceq\subseteq [V^{(r)}]^{\preceq}$. The
$P^{(i)}$ form an $e$-partition of $V^{(1)},V^{(2)},\ldots, V^{(m)}$ but
$\forall i,\bigcap_{r\in
  P^{(i)}}[V^{(r)}]^\preceq\supseteq[\rho_i]^\preceq\ne\emptyset$,
which contradicts the assumption that
$V^{(1)},V^{(2)},\ldots,V^{(m)}$ is an $e$-disperse sequence.
\end{proof}
\begin{fact}
\label{fac3}
If a single Turing functional $\Psi_e^{C\oplus\rho}$ doesn't abandon $V$ on $X$ then for any $Y\subseteq X$, $\Psi_e^{C\oplus\rho}$ doesn't abandon $V$ on $Y$.
\end{fact}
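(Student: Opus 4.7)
The plan is to unfold Definition \ref{def3}(1) and argue by contrapositive. Suppose $\Psi_e^{C\oplus\rho}$ abandons $V$ on $Y$. By definition, there exist $Z\subseteq Y$ and $n\in\mathbb{N}$ witnessing this: either $\Psi_e^{C\oplus Z/\rho}(n)\!\downarrow = D$ with $[D]^{\preceq}\cap[V]^{\preceq}=\emptyset$, or $|\Psi_e^{C\oplus Z/\rho}(n)|>e$.

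Since $Y\subseteq X$, we have $Z\subseteq X$ as well, so the very same pair $(Z,n)$ witnesses that $\Psi_e^{C\oplus\rho}$ abandons $V$ on $X$, contradicting the hypothesis. Hence $\Psi_e^{C\oplus\rho}$ cannot abandon $V$ on $Y$.

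There is essentially no obstacle here; the fact is a one-line consequence of the monotonicity built into the existential quantifier ``$\exists Z\subseteq Y$'' in the definition of ``abandons''. The content of the lemma is just that the ``abandons'' predicate is downward-monotone in the ambient set argument, which is why it is recorded as a fact rather than a theorem: it will be used repeatedly in later steps to transfer non-abandonment from a large ordered-partition part down to whatever subset of it the eventual generic $G$ happens to lie in.
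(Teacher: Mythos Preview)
Your argument is correct and is exactly the approach the paper takes: the paper's proof simply reads ``By the definition of `abandon', item (1)'', and your contrapositive unpacking of that definition is the intended content of that one-line reference.
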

\begin{proof}
By the definition of ``abandon'', item (1).
\end{proof}
Combining Fact \ref{fac2} and Fact \ref{fac3}:
\begin{fact}
\label{fac4}
Let $V^{(1)},V^{(2)},\ldots, V^{(m)}$ be an $e$-disperse sequence of
clopen sets and let $\Psi$ be a Turing functional such that for each
$i$, $\Psi$ as a strong $e$-enumeration doesn't abandon $V^{(i)}$ on
a set $X^{(i)}$. Then, for any $Y\subseteq \bigcap_{i=1}^{m}X^{(m)}$,
$\Psi^Y$ is not total or is not a strong $e$-enumeration. (As Fact \ref{fac2}
this also holds for the relativized version.)
\end{fact}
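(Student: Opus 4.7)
The plan is simply to chain Fact \ref{fac3} and Fact \ref{fac2} together: Fact \ref{fac3} exists precisely to bridge the gap between the hypothesis of Fact \ref{fac2} (which requires a \emph{single} set $X$ on which $\Psi$ fails to abandon every $V^{(i)}$) and the hypothesis of Fact \ref{fac4} (which allows a potentially different $X^{(i)}$ for each $V^{(i)}$).

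Concretely, I would first fix $Y \subseteq \bigcap_{i=1}^{m} X^{(i)}$. For each $i \leq m$, the inclusion $Y \subseteq X^{(i)}$ together with the hypothesis ``$\Psi$ doesn't abandon $V^{(i)}$ on $X^{(i)}$'' lets me invoke Fact \ref{fac3} to conclude that $\Psi$ doesn't abandon $V^{(i)}$ on $Y$. Doing this for every $i$ produces a common set, namely $Y$ itself, on which $\Psi$ simultaneously avoids abandoning each of $V^{(1)}, \ldots, V^{(m)}$.

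At this point the hypotheses of Fact \ref{fac2} are met with $X$ taken to be $Y$, so Fact \ref{fac2} applies directly and gives that $\Psi^{Y'}$ is not total or not a strong $e$-enumeration for any $Y' \subseteq Y$; specializing to $Y' = Y$ yields the conclusion. The relativized statement requires no additional argument, because both Fact \ref{fac2} and Fact \ref{fac3} were stated (or proved) to hold in the form obtained by replacing $\Psi$ with $\Psi^{C \oplus \rho}$ and $\Psi^{Y}$ with $\Psi^{C \oplus Y/\rho}$; the same two-line chaining then goes through verbatim.

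I do not expect any genuine obstacle here, since the only real content of Fact \ref{fac4} beyond Fact \ref{fac2} is the passage from ``$\Psi$ doesn't abandon $V^{(i)}$ on the common ambient set'' to ``$\Psi$ doesn't abandon $V^{(i)}$ on each individual $X^{(i)}$,'' and that passage is exactly what Fact \ref{fac3} supplies. Accordingly I'd expect this to be one of the shortest proofs in the paper.
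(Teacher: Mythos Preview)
Your proposal is correct and matches the paper's own approach exactly: the paper simply writes ``Combining Fact \ref{fac2} and Fact \ref{fac3}'' as the entire justification for Fact \ref{fac4}, and you have spelled out precisely how that combination works. (One tiny slip in your closing paragraph: the direction of the passage is from the individual $X^{(i)}$'s down to the common $Y$, not the reverse---but your actual proof steps have it right.)
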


Together with Fact \ref{fac4}, the following two facts tell us how to
apply the $\textit{Cross}$ operation in order to ensure that $T_s$ is an
ordered-partition-tree.

\begin{fact}
\label{fac1}
Let $e_1,e_2,\ldots, e_u$ be $u$ many positive numbers and let
$k'=\sum_{i=1}^u e_i$. If $V^{(1)},V^{(2)},\ldots, V^{(n)}$ is a
$k'$-disperse sequence of clopen sets, for each $1\leq i\leq u$,
let $\mathcal{K}_i=\{K\subseteq \{1,2,\ldots, n\}: \{V^{(j)}\}_{j\in
K}  \textrm{ is an $e_i$-disperse class}\}$. Then
$\mathcal{\vec{K}}=(\mathcal{K}_1,\ldots, \mathcal{K}_u)$ is a
$u$-supporter of $\{1,2,\ldots, n\}$.
\end{fact}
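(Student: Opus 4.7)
The plan is to argue the contrapositive by packaging the supposed failure of $u$-support into a witness refuting $k'$-dispersity. The result is essentially a definitional repackaging, so the whole proof is a two-step unpacking followed by a concatenation.

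Concretely, I would assume $\vec{\mathcal{K}}$ is \emph{not} a $u$-supporter and fix an ordered $u$-partition $P^{(1)}\cup\cdots\cup P^{(u)}=\{1,\ldots,n\}$ such that for every $i\leq u$ and every $K\in\mathcal{K}_i$, $K\not\subseteq P^{(i)}$. Specializing to $K=P^{(i)}$ itself forces $P^{(i)}\notin\mathcal{K}_i$, i.e.\ $\{V^{(j)}\}_{j\in P^{(i)}}$ is itself \emph{not} $e_i$-disperse, for each $i\leq u$. Unpacking the definition of $e_i$-dispersity, for every $i$ I obtain a witnessing ordered $e_i$-partition $P^{(i)}=P^{(i)}_1\cup\cdots\cup P^{(i)}_{e_i}$ with the property that $\bigcap_{j\in P^{(i)}_l}[V^{(j)}]^{\preceq}\neq\emptyset$ for every $l\leq e_i$.

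Concatenating these $u$ witnesses part by part, I claim the list
\[
(P^{(1)}_1,\ldots,P^{(1)}_{e_1},\ P^{(2)}_1,\ldots,P^{(2)}_{e_2},\ \ldots,\ P^{(u)}_1,\ldots,P^{(u)}_{e_u})
\]
is an ordered $k'$-partition of $\{1,\ldots,n\}$: it has exactly $k'=\sum_{i=1}^u e_i$ entries, and $\bigcup_{i,l}P^{(i)}_l=\bigcup_i P^{(i)}=\{1,\ldots,n\}$. By construction every one of its $k'$ parts $P^{(i)}_l$ satisfies $\bigcap_{j\in P^{(i)}_l}[V^{(j)}]^{\preceq}\neq\emptyset$, contradicting $k'$-dispersity of $V^{(1)},\ldots,V^{(n)}$.

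There is no serious obstacle here; the only subtlety worth flagging is that ``partition'' in Definition \ref{def2} means ``covering'' rather than ``pairwise disjoint partition'', so concatenating the $u$ sub-partitions automatically covers $\{1,\ldots,n\}$ and I do not have to do any disjointness bookkeeping. This is what makes the straightforward concatenation legitimate.
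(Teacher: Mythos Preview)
Your proof is correct and follows essentially the same route as the paper's own proof: assume the contrapositive, deduce that each $P^{(i)}\notin\mathcal{K}_i$, unpack non-$e_i$-dispersity to get an $e_i$-subpartition of $P^{(i)}$, and concatenate to contradict $k'$-dispersity. Your extra line justifying $P^{(i)}\notin\mathcal{K}_i$ (by specializing the hypothesis to $K=P^{(i)}$) and your remark that ``partition'' here means ``covering'' are both accurate clarifications that the paper leaves implicit.
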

\begin{proof}
Suppose, on the contrary, there is an ordered partition
$P^{(1)},P^{(2)},\ldots, P^{(u)}$ of $\{1,2,\ldots,n\}$ such that
for
all $1\leq i\leq u$, $P^{(i)}\notin \mathcal{K}_i$,
i.e.\ $\{V^{(j)}\}_{j\in P^{(i)}}$ is not an $e_i$-disperse sequence of
clopen sets. Then for each $i$, there exists a partition
$P^{(i,1)}\cup P^{(i,2)} \cup\cdots \cup P^{(i,e_i)}=P^{(i)}$ such
that $(\forall e\leq e_i)(\bigcap_{j\in P^{(i,e)}}[V^{(j)}]^\preceq
\ne\emptyset)$. However, then $P^{(1,1)},P^{(1,2)},\ldots,
P^{(1,e_1)},\ldots, P^{(i,j)},\ldots, P^{(K,e_u)}$ is a $k'$-partition
of $\{1,2,\ldots, n\}$ that contradicts the assumption that
$V^{(1)},V^{(2)},\ldots, V^{(n)}$ is a $k'$-disperse class of clopen
sets.
\end{proof}
By the definition of $u$-supporter we have:
\begin{fact}
\label{fac5}
Let $\mathcal{\vec{K}}=(\mathcal{K}_1,\ldots,\mathcal{K}_u)$ be a
$u$-supporter of $\{1,2,\ldots, n\}$, let the sets $X^{(1)}, X^{(2)}, \ldots,
X^{(n)}$ be ordered $u$-partitions of $W$, and let
$K'=\sum_{i=1}^{K}|\mathcal{K}_i|$. Then
$\textit{Cross}(X^{(1)},X^{(2)},\ldots, X^{(n)};\mathcal{\vec{K}})$ is
an ordered $K'$-partition of $W$.
\end{fact}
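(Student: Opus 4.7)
The plan is to verify that every $w \in W$ lies in at least one of the components $Y_{K_{i,j}}$ of $\textit{Cross}(X^{(1)},\ldots,X^{(n)};\mathcal{\vec{K}})$. The converse inclusion $\bigcup_{i,j} Y_{K_{i,j}} \subseteq W$ is immediate because each $Y_{K_{i,j}}$ is an intersection of sets $\pi_i^u(X^{(p)})$, each of which is a subset of $W$ (as $X^{(p)}$ is a $u$-partition of $W$).

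For the main direction, I would fix an arbitrary $w \in W$ and use the fact that each $X^{(p)}$ is an ordered $u$-partition of $W$ to select, for every $p \in \{1,2,\ldots,n\}$, some index $i_p \leq u$ with $w \in \pi_{i_p}^u(X^{(p)})$. Then I would define, for each $i \in \{1,\ldots,u\}$, the set $P^{(i)} = \{p \in \{1,\ldots,n\} : w \in \pi_i^u(X^{(p)})\}$. The assignment $p \mapsto i_p$ witnesses $\bigcup_{i=1}^u P^{(i)} = \{1,2,\ldots,n\}$, so $(P^{(1)},\ldots,P^{(u)})$ is an ordered $u$-partition of $\{1,2,\ldots,n\}$ in the (not necessarily disjoint) sense of Definition \ref{def2}.

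Next I would invoke the $u$-supporter property of $\mathcal{\vec{K}}$ applied to this partition: there exist $i \leq u$ and some $K_{i,j} \in \mathcal{K}_i$ with $K_{i,j} \subseteq P^{(i)}$. By the defining property of $P^{(i)}$, every $p \in K_{i,j}$ satisfies $w \in \pi_i^u(X^{(p)})$, whence $w \in \bigcap_{p \in K_{i,j}} \pi_i^u(X^{(p)}) = Y_{K_{i,j}}$. Since $Y_{K_{i,j}}$ is one of the $K' = \sum_{i=1}^{u}|\mathcal{K}_i|$ components listed in the definition of $\textit{Cross}$, and $w$ was arbitrary, we conclude $\bigcup_{i,j} Y_{K_{i,j}} = W$.

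I do not anticipate any genuine obstacle: the fact is essentially a packaging statement that translates the combinatorial definition of ``$u$-supporter'' into the geometric content of the $\textit{Cross}$ operation, so the proof amounts to unwinding definitions and applying the supporter property exactly once. The only modest point to articulate carefully is that the $P^{(i)}$ built from $w$ really form an ordered $u$-partition in the overlapping sense of Definition \ref{def2}, since the choice of $i_p$ need not be unique.
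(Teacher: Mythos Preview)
Your proposal is correct and follows essentially the same argument as the paper: fix $w\in W$, form the sets $P^{(i)}=\{p:w\in\pi_i^u(X^{(p)})\}$, observe they constitute an ordered $u$-partition of $\{1,\ldots,n\}$, and apply the $u$-supporter property to obtain $K_{i,j}\subseteq P^{(i)}$ with $w\in Y_{K_{i,j}}$. Your write-up is slightly more explicit (you also note the easy reverse inclusion and the auxiliary choice of $i_p$), but the route is identical.
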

\begin{proof}
This is straightforward by the definition of $u$-supporter. We show
that for each $x\in W$ there exists an $i$ and $K_{i,j}\in\mathcal{K}_i$, such
that $x\in \bigcap_{p\in K_{i,j}}\pi_i^u(X^{(p)})$. Note that for each
$m\leq n$, $x$ belongs to some part of $X^{(m)}$, and therefore
$P^{(i)}=\{m\leq n: x\in\pi_i^u(X^{(m)})\}$, $1\leq i\leq u$, is a
$u$-partition of $\{1,2,\ldots, n\}$. By the definition of
$u$-supporter, there exists an $i$ and $K_{i,j}\in\mathcal{K}_i$ such that
$K_{i,j}\subseteq P^{(i)}$, which implies that $x\in\bigcap_{p\in
K_{i,j}}\pi_i^K(X^{(p)})$.
\end{proof}

Furthermore, the following plain fact tells us what kind of finite
extension of an initial segment requirement is allowed.
\begin{fact}
\label{fac9}
If $\Psi^{C\oplus \rho}$ doesn't abandon $V$ on $Y$ and
$\sigma\supset\rho$ is such that $\set(\sigma)\subseteq Y\cup
\set(\rho)$, then $\Psi^{C\oplus \sigma}$ also doesn't abandon
$V$ on $Y$. Furthermore, if $\vec{\Psi}^{C\oplus\vec{\rho}}$ doesn't
abandon $V$ on $X=\bigoplus_{i=1}^k X_k$ and
$\vec{\sigma}\supset\vec{\rho}$ is such that  $(\forall i\leq
k)\set(\sigma_i)\subseteq X_i\cup \set(\rho_i)$, then
$\vec{\Psi}^{C\oplus \vec{\sigma}}$ also doesn't abandon $V$ on $X$.
\end{fact}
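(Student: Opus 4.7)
The plan is to reduce everything to the single-functional statement (item (1) of Definition \ref{def3}), then read off the pair and vector versions from items (2)--(4). The convenient form is the contrapositive: if $\Psi^{C\oplus\sigma}$ abandons $V$ on $Y$, then so does $\Psi^{C\oplus\rho}$. So the task is to convert any witness of abandoning with respect to $\sigma$ into one with respect to $\rho$, which is purely a bookkeeping exercise with the definition of $Z/\rho$.

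For the single-functional case, suppose $\Psi^{C\oplus\sigma}$ abandons $V$ on $Y$, witnessed by some $Z\subseteq Y$ and $n\in\omega$. Since $\rho\prec\sigma$, I would set
\[
Z' = (\set(\sigma)\setminus\set(\rho))\cup\bigl(Z\cap\{i:i\geq|\sigma|\}\bigr).
\]
The key computation is the set identity $Z'/\rho = Z/\sigma$: both equal $\set(\sigma)\cup(Z\cap\{i\geq|\sigma|\})$, using $\set(\rho)\subseteq\set(\sigma)$ from $\rho\prec\sigma$. The inclusion $Z'\subseteq Y$ follows from the hypothesis $\set(\sigma)\subseteq Y\cup\set(\rho)$, which forces $\set(\sigma)\setminus\set(\rho)\subseteq Y$, together with $Z\cap\{i\geq|\sigma|\}\subseteq Z\subseteq Y$. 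Hence $\Psi^{C\oplus(Z'/\rho)}(n) = \Psi^{C\oplus(Z/\sigma)}(n)$, and the same output that witnesses abandoning for $\sigma$ witnesses it for $\rho$ via $Z'$.

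For the pair $\Psi = \langle\Psi_l,\Psi_r\rangle$, item (3) of Definition \ref{def3} says that not-abandoning on $Y$ means there is an ordered 2-partition $Y_l\cup Y_r = Y$ with neither $\Psi_l^{C\oplus\rho_l}$ nor $\Psi_r^{C\oplus\rho_r}$ abandoning on its respective side. Using $\sigma_l\supset\rho_l$, $\sigma_r\supset\rho_r$ and the componentwise containment built into the hypothesis, the single-functional case applied to each component shows that the same 2-partition continues to work under $\sigma$. The vector version then follows coordinate-by-coordinate from item (4), reusing the 2-partitions of each $X_i$ supplied by the hypothesis on $\vec\Psi^{C\oplus\vec\rho}$.

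The only nontrivial step is the set identity $Z'/\rho = Z/\sigma$, and even that is a direct unfolding of the overwriting operation $\cdot/\cdot$; everything else amounts to matching nested definitions. There is no genuine obstacle here — the purpose of recording this ``plain fact'' is simply to have it on hand when extending initial segment requirements within the forcing argument.
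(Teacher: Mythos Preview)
The paper gives no proof of Fact~\ref{fac9}; it simply labels it a ``plain fact'' and moves on. Your contrapositive argument for the single-functional case, with the explicit witness $Z' = (\set(\sigma)\setminus\set(\rho))\cup(Z\cap\{i\geq|\sigma|\})$ and the verification that $Z'/\rho = Z/\sigma$, is correct and is exactly the kind of routine unfolding the paper is suppressing.

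One small caution on the pair/vector extension: reusing the \emph{same} 2-partition $X_{il}\cup X_{ir}=X_i$ for $\vec\sigma$ requires $\set(\sigma_{il})\subseteq X_{il}\cup\set(\rho_{il})$ (and likewise on the right), whereas the Fact as literally stated only gives $\set(\sigma_i)\subseteq X_i\cup\set(\rho_i)$ --- containment in the whole part, not in a specific side of the existentially-chosen partition. In the paper's actual usage (e.g.\ extensions chosen inside $A\cap X_i$ on the left and $\bar A\cap X_i$ on the right, with the partition $X_{il}=X_i\cap A$, $X_{ir}=X_i\cap\bar A$) this stronger sidewise containment always holds, so your ``componentwise containment built into the hypothesis'' is the right reading in context; just be aware that it is slightly more than the displayed hypothesis promises.
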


\subsection{\textbf{STEP s}: construct $(\vec{\rho}^{\, s},T_s)$}
\label{subsec4}
Suppose we have a $C$-computable ordered $k_{s-1}$-partition tree
$T_{s-1}$, for some $C\in \mathcal{M}$ (here we mean a $k$-partition
of some
$W=^*\omega$; note that in above argument $T_1$ is a 6-partition tree
of $\omega$), and we need to construct $T_{s}$ ensuring that on
each child part, either the left-hand side or the right-hand side
steps forward, i.e., if $G$ satisfies
$(\rho_{il},\pi_i^{k}(X))$ for some $X\in [T_{s-1}]$ then
$\Psi_{il}^{C^{(j)}\oplus G}$ is not a strong
$e^{s-1,j}_{i,l}$-enumeration of $Q$ (where
$e^{s-1,j}_{i,l}$ is the index of $\Psi_{il}$ as updated at step
$s-1$), and if $G$ satisfies $(\rho_{ir}, \pi_i^{k}(X))$ for some
$X\in [T_{s-1}]$ then $\Psi_{ir}^{C^{(j)}\oplus G}$ is not a
 strong $e^{s-1,j}_{i,r}$-enumeration of $Q$.

\textbf{Case i}: If there exists $\rho$ and $i$ satisfying either
\begin{multline*}\rho\supset \rho_{il} \wedge (\exists n,\ |\Psi_{il}^{C^{(j)}\oplus \rho}(n)|>e^{s-1,j}_{il}\vee[\Psi_{il}^{C^{(j)}\oplus\rho}(n)\!\downarrow]\cap [Q]=\emptyset)\wedge\\ (\set(\rho)\subseteq A)\wedge
    (\exists X\in [T],\ \set(\rho)\subseteq\pi_i^k(X)/\rho_{il})\end{multline*}
or
\begin{multline*}\rho\supset \rho_{ir} \wedge (\exists n,\ |\Psi_{ir}^{C^{(j)}\oplus \rho}(n)|>e^{s-1,j}_{ir}\vee[\Psi_{ir}^{C^{(j)}\oplus\rho}(n)\!\downarrow]\cap [Q]=\emptyset)\wedge\\ (\set(\rho)\subseteq \bar{A})\wedge
    (\exists X\in [T],\
  \set(\rho)\subseteq\pi_i^k(X)/\rho_{ir}).
\end{multline*}
Then we can extend $\rho_{il}$ or $\rho_{ir}$ to $\rho$, which ensures
that either $\Psi_{il}^{C^{(j)}\oplus G}$ or $\Psi_{ir}^{C^{(j)}\oplus G}$ is
not a strong constant-bound-enumeration of $Q$, assuming $G$
satisfies $\langle\rho_{il},\pi_{i}^k(X)\rangle$ or
$\langle\rho_{ir},\pi_{i}^k(X)\rangle$, respectively, for some $X\in
[T]$. Note that this operation doesn't increase the total number of
parts, and that the new $\Pi_1^0$ class $[T]$ is still nonempty since
$\exists X\in [T]$ such that
$\set(\rho)\subseteq\pi_i^k(X)/\rho_{il}$ (or similarly for
the right-hand side).  During
the construction the above process will be repeatedly carried out within
step $s$ until case i fails to occurs, which must happen as proved in
Lemma \ref{fac6}.

\textbf{Case ii}: If case i fails to occur.
 Let $k'=\sum_{i=1}^k (e^{s-1,j}_{i,l}+e^{s-1,j}_{i,r})$, where $k =
 k_{s-1}$, and each $e^{s-1,j}_{i,r}$ is as
updated in previous applications of case i this stage, if any,
 i.e.\ the present progress for $R(\cdots;j)$, part $i$. \emph{Try} to find a
 $k'$-disperse sequence of clopen sets $V^{(1)},V^{(2)}\ldots,
 V^{(n)}$ (in step 1, $k'=1+1=2$), such that the following classes are
 nonempty for $m \leq n$:

\begin{multline*}
[T_{V^{(m)}}]=
\{X=\bigoplus_{i=1}^k (X_{il}\oplus X_{ir}): \bigoplus_{i=1}^k (X_{il}\cup X_{ir})\in [T_s]
\wedge \\ \vec{\Psi}^{C^{(j)}\oplus \vec{\rho}}
\text{ doesn't abandon }V^{(m)} \text{ on } \bigoplus_{i=1}^k
(X_{il}\oplus X_{ir})\}.
\end{multline*}
The above condition means that
\begin{multline*}
(\forall i\leq k \forall G\subseteq X_{il} \forall
  n)(|\Psi_{il}^{C^{(j)}\oplus G/\rho_{il}}(n)|\leq e^{s-1,j}_{i,l}
  \wedge \\ (\Psi_{il}^{C^{(j)}\oplus G/\rho_{il}}(n)\!\uparrow\vee
[\Psi_{il}^{C^{(j)}\oplus G/\rho_{il}}(n)]^\preceq\cap
[V^{(m)}]^\preceq\ne\emptyset)),
\end{multline*}
and similarly for the right-hand side. Note that each $[T_{V^{(m)}}]$
is a $\Pi_1^{0,C^{(j)}}$ class of ordered $2k$-partitions.

For all $1\leq i\leq 2k$, if $i$ is even then let $i'=\frac{i}{2}$ and
define $\mathcal{K}_i=\{K\subseteq \{1,2,\ldots, n\}:
\{V^{(m)}\}_{m\in K} \textrm{ is an $e^{s-1,j}_{i',l}$-disperse
  class}\}$; if $i$ is odd then let $i'=\frac{i+1}{2}$ and define
$\mathcal{K}_i=\{K\subseteq \{1,2,\ldots, n\}: \{V^{(m)}\}_{m\in K}
\textrm{ is an $e^{s-1,j}_{i',r}$-disperse class}\}$. Note that by
Fact \ref{fac1}, $\mathcal{\vec{K}}=(\mathcal{K}_1,\ldots,
\mathcal{K}_{2k})$ is a $2k$-supporter of $\{1,2,\ldots, n\}$.

Let
\[T_{s}=\textit{Cross}(T_{V^{(1)}},T_{V^{(2)}},\ldots, T_{V^{(n)}};\mathcal{\vec{K}}),\]
i.e.,
$$
\forall X\in [T_{s}],\ X=\bigoplus_{i\leq 2k}^{}(\bigoplus_{j\leq
  |\mathcal{K}_i|}Y_{K_{i,j}}),$$ where
$$Y_{K_{i,j}}=\bigcap_{p\in K_{i,j}}\pi_{i}^{2k}(X^{(p)}) \textrm{ for }
X^{(p)}\in [T_{V^{(p)}}].$$

Then we try to satisfy positive requirements by appropriately extending the present
initial segment requirements which were updated in previous applications of case i of
the P-Operation in the last stage.
Then we update progress information (i.e.\ update $\vec{e}^j$).

Note:
\begin{enumerate}
\item $[T_{V^{(m)}}]$ is a $\Pi_1^{0,C\oplus C^{(j)}}$ class.
\medskip

\item If $[V]^\preceq\supseteq [Q]$ then $[T_V]\ne\emptyset$, as
otherwise case i occurs.
\medskip

\item For all $i$, if $i$ is even then $\Psi_{il}^{C^{(j)}\oplus
  G/\rho_{il}}$ is not a strong
  $e^{s-1,j}_{il}$-enumeration of $Q$ for any
  $G\subseteq Y_{K_{i,j}}$, and if $i$ is odd then
  $\Psi_{ir}^{C^{(j)}\oplus G/\rho_{ir}}$ is not a strong
  $e^{s-1,j}_{ir}$-enumeration of $Q$ for any
  $G\subseteq Y_{K_{i,j}}$ (by Fact \ref{fac4} and definition of
  $\vec{\mathcal{K}}$), i.e.\ on each child part $R(\cdots;j)$ steps forward
  on either the left-hand side or the right-hand side.
\medskip

\item $[T_{s}]$ is a $\Pi_1^{0,C\oplus C^{(j)}}$ class;
\medskip

\item $\bigcup_{i\leq 2k}(\bigcup_{j\leq |\mathcal{K}_i|}
  Y_{K_{i,j}})=W$, i.e.\ $T_{s}$ is an ordered-$K'$-partition tree,
  where $K'=\sum_{i=1}^K |\mathcal{K}_i|$. (See Fact \ref{fac5}; this
  is why we choose a $k'$-disperse class of clopen sets at \emph{this} step.)
\end{enumerate}

\emph{\textbf{How the requirement R(e;j) is satisfied}}: For some path
$X\in [T_{s}]$, $G$ will be contained in some $\pi_{i'}^{K'}(X)$, and
we will have $G\supset \rho_{i'l}$ or $G\supset \rho_{i'r}$. Therefore
either $R(e^{s-1,j}_{il};j)$ or $R(e^{s-1,j}_{ir};j)$ is satisfied on
part $i'$ of $T_s$, where part $i$ of $T_{s-1}$ is the parent of part $i'$ of $T_s$.

\section{Construction and verification}
\label{sec4}
 Given a countable class $\mathcal{M}$ closed under disjoint union, a
 set $A$, and a closed set $[Q]$ of $2^\omega$ satisfying  the
 conditions stated in Theorem \ref{th1}, we give a concrete
 construction of an infinite set $G$ with $G\subseteq A\vee
 G\subseteq\bar{A}$, such that $\forall C\in\mathcal{M},G\oplus C$
 does not compute a constant-bound-enumeration of
  $Q$, and prove that the construction provides the
 desired $G$.

 If $A\leq_T C^{(j)}$ for some $C^{(j)}\in\mathcal{M}$ then just let
 $G=A$ if $A$ is infinite, and $G=\bar{A}$ otherwise. So without loss of generality assume $\forall C\in\mathcal{M},A\nleq_T C$.

\subsection{Some operations}
\label{subsec5}
Suppose we are given $A\in2^\omega$, and let $T$ be an ordered
$k$-partition tree of $2^{<\omega}$, with $\vec{\rho}$ being the
corresponding initial segment requirement.

To satisfy positive requirements we will apply the P-Operation.
\begin{definition}[\textbf{P-Operation}]
\label{}
P-Operation applied to the left-hand side of $T$'s part $i$: Choose
$\rho\supset\rho_{il}$ (if one exists) such that $[T_{\rho,i}]=\{X\in
[T]:set(\pi_i^k(X))\supset set(\rho)\}\neq\emptyset$ and $\emptyset\ne
\set(\rho)-\set(\rho_{il})\subset A$, then update initial segments: let
$\rho_{il}=\rho/\rho_{il}$, let $\rho_{jl}=\rho_{jl}$ for $j\neq i$,
and let $\rho_{ir}=\rho_{ir}$ for all $i$. Finally let $T = T_{\rho,i}$.

If such a $\rho$ exists, we say the P-Operation succeeds. Otherwise do
nothing; in this case we say the P-Operation fails.

The P-Operation applied to the right-hand side of $T$'s part $i$ is
analogous, with $A$ replaced by $\bar{A}$.
\end{definition}
To satisfy $R(\cdots;j)$ we will apply the R-i-Operation or the
R-ii-Operation depending on which of case i or case ii occurs.

\begin{definition}[\textbf{R-i-Operation} to
    $(T,\vec{\rho},j,\vec{\Psi}=\vec{e}^{\ s-1,j})$] Choose the least
  $i\leq k$ for which either

  \begin{itemize}
  \item there exists a $\rho\supset\rho_{il}$ such that
\begin{multline*}
(\exists n,|\Psi_{il}^{C^{(j)}\oplus
    \rho}(n)|>e^{s-1,j}_{il}\vee[\Psi_{il}^{C^{(j)}\oplus\rho}(n)\!\downarrow]\cap
  [Q]=\emptyset)\wedge \\ (\set(\rho)\subseteq A)\wedge
    (\exists X\in [T],
  \set(\rho)\subseteq\pi_i^k(X)/\rho_{il}),
\end{multline*}
or

\item there exists a $\rho\supset\rho_{ir}$ satisfying a similar
condition as above but with $\set(\rho)\subseteq A$ replaced by $\set(\rho)\subseteq \bar{A}$
and ``$il$'' replaced by ``$ir$''.
\end{itemize}
\medskip

If such a $\rho$ exists we say case i occurs to $R(\cdots;j)$. For each
subcase we say that the left-hand side of part $i$ steps forward and
that $R(\cdots;j)$ steps forward on the left-hand side; or that the
right-hand side of part $i$ steps forward and that $R(\cdots;j)$ steps
forward on the right-hand side. We say that $R(\cdots;j)$, part $i$ steps
forward no matter which side steps forward.
\medskip

Then update initial segment requirements (according to $\rho,i$), i.e.\ set
$\rho_{il}(\rho_{ir}) = \rho$ in the first (second) subcase,
and update progress information:
\begin{align*}
    e^{s-1,m}_{il}=e^{s-1,m}_{il}  \textrm{ and } e^{s-1,m}_{ir}=e^{s-1,m}_{ir}&\text{ if } m\ne j \\ e^{s-1,j}_{ir}=e^{s-1,j}_{ir}+1  \textrm{ and } e^{s-1,j}_{il}=e^{s-1,j}_{il} &\text{ if the right-hand side steps forward} \\
    e^{s-1,j}_{il}=e^{s-1,j}_{il}+1  \textrm{ and } e^{s-1,j}_{ir}=e^{s-1,j}_{ir} &\text{ if the left-hand side steps forward}
\end{align*}
and $\vec{\Psi}=\vec{e}^{\ s-1,j}$ in any case.

\end{definition}

\begin{definition}[\textbf{R-ii-Operation} to $(T,\vec{\rho},j,\vec{\Psi}=\vec{e}^{\ s-1,j})$]

For a clopen set $W$, let
 \begin{multline*}
[T_{W}]=
    \{X=\bigoplus_{i=1}^k (X_{il}\oplus X_{ir}): \bigoplus_{i=1}^k (X_{il}\cup X_{ir})\in [T]
    \wedge \\ \vec{\Psi}^{C^{(j)}\oplus \vec{\rho}}
    \text{ doesn't abandon }W \text{ on } \bigoplus_{i=1}^k (X_{il}\oplus X_{ir})\}\ne\emptyset.
    \end{multline*}

In this operation we find a $k'$-disperse sequence of
clopen sets $V^{(1)},V^{(2)},\ldots, V^{(n)}$ where
$k'=\sum_{i=1}^{k}e^{s-1,j}_{il}+e^{s-1,j}_{ir}$ such that for all $m \leq n$,
$[T_{V^{(m)}}]\ne \emptyset$.

If such a $k'$-disperse sequence can be found, we say case
    ii occurs. We will show in Lemma \ref{fac8} that if case i fails to
    occur then case ii must occur, as otherwise we obtain a
    $C^{(j)}$-computable constant-bound-enumeration of $Q$.
\medskip

Define $\mathcal{\vec{K}}$: for $i=1,2,\ldots, 2k$,
     \begin{enumerate}
     \item if $i$ is even then let $i'=\frac{i}{2}$ and define
       $$\mathcal{K}_i=\{K\subseteq \{1,2,\ldots, n\}:
       \{V^{(m)}\}_{m\in K} \textrm{ is an $e^{s-1,j}_{i',l}$-disperse class}\};$$
     \item if $i$ is odd then let $i'=\frac{i+1}{2}$ and define
       $$\mathcal{K}_i=\{K\subseteq \{1,2,\ldots, n\}:
       \{V^{(m)}\}_{m\in K}  \textrm{ is an $e^{s-1,j}_{i',r}$-disperse class}\};$$
         \end{enumerate}
and let $u=\sum_{i=1}^{2k}|\mathcal{K}_i|$.
\medskip

      Update $T$:  $T=T_s=\textit{Cross}(T_{V^{(1)}},T_{V^{(2)}},\ldots, T_{V^{(n)})};\mathcal{\vec{K}})$;
      Update $k=k_s=u$.

      \noindent(Note that each new part is contained in an old part of
      some path through $T$. Call a new part $i'$ a left child of an old
      part $i$ iff for any $X\in [T_{s}]$, $\pi_{i'}^{k_{s}}(X)$ is the
      intersection of $\pi_{il}^{k_{s-1}}(X^{(p)})$, $p\in K_{2i,m}$
      for some $K_{2i,m}$, where $X^{(p)}\in [T_{V^{(p)}}]$; and
      similarly for the right-hand side with $2i$ replaced by $2i-1$.)
\medskip

      Update progress information: for $i=1,2,\ldots, u$, let
      \begin{align*}
    e^{s,m}_{il}=e^{s-1,m}_{il},\ e^{s-1,m}_{ir}=e^{s-1,m}_{ir}&\text{ if } m\ne j \\ e^{s,j}_{ir}=e^{s-1,j}_{i'r}+1,\ e^{s-1,j}_{il}=e^{s-1,j}_{i'l} &\text{ if part $i$ of }T_{s} \text{ is a right child of part $i'$ of }T_{s-1} \\
    e^{s,j}_{il}=e^{s-1,j}_{i'l}+1,\ e^{s-1,j}_{ir}=e^{s-1,j}_{i'r} &\text{ if part $i$ of }T_{s} \text{ is a left child of part $i'$ of }T_{s-1}.
     \end{align*}
Finally, assign the value $\vec{e}^{\ s,t_{s+1}}$ to $\vec{\Psi}$.
\end{definition}

\subsection{Construction}
\label{subsec6}
Let $C^{(j)}$ be a sequence of sets of $\mathcal{M}$ that is cofinal
in $\mathcal{M}$, i.e.\ $(\forall
j\in\mathbb{N})[C^{(j)}\in\mathcal{M}\wedge (\forall C\in
\mathcal{M}\exists C^{(j)},C\leq_T C^{(j)})]$; furthermore we require
that $\forall j\in\mathbb{N},C^{(j)}\leq_T C^{(j+1)}$. Fix a sequence
of elements of $\{C^{(j)}\}_{j\in\mathbb{N}}$, namely $C^{(t_s)}$,
such that each $C^{(j)}$ appears infinitely often in this sequence,
i.e.\ $\exists^{\infty}s,t_s=j$.

Now suppose we are at the beginning of step $s$, we have a
$C$-computable $k_{s-1}$-partition tree $T=T_{s-1}$ with
$[T]\ne\emptyset$, and $C^{(j)}$ is the corresponding set in the
$s^{\text{th}}$ position of the sequence (i.e.\ $t_s=j$, so step $s$
is devoted to $R(\cdots;j)$), with current progress
$\vec{\Psi}=\vec{e}^{\ s-1,j}$ for $R(\cdots;j)$ and initial segment
requirements $\vec{\rho}$.

\emph{Begin step $s$ of the construction:}
\begin{enumerate}
 \item Repeatedly apply the R-i-Operation to
   $(T,\vec{\rho},j,\vec{\Psi}=\vec{e}^{\ s-1,j})$ until case i fails
   to occur to $R(\cdots;j)$.

 \noindent (We will show in Lemma \ref{fac6} that after finitely many
 repetitions case i will stop occurring).

 \item Then apply the R-ii-Operation to
   $(T,\vec{\rho},j,\vec{\Psi}=\vec{e}^{\ s-1,j})$.

 \noindent (We will show in Lemma \ref{fac8} that if case i doesn't
occur then case ii must occur, as otherwise we obtain a
 strong constant-bound-enumeration of $Q$.)

\item For each part $i$, $i\leq k_s$, if part $i$ is a left child then
  apply the P-Operation to the left-hand side of $T_s$'s part $i$;
  otherwise apply the P-Operation to the right-hand side of $T_s$'s
  part $i$.

     \noindent (Here the P-Operation is applied to each part in order of
     their indices. We will show in Lemma \ref{fac7} that the
     P-Operation succeeds on at least one of the new parts. Note that
     on each ``new'' part at least one side of $R(\cdots;j)$ has just
     stepped forward.)

 \item Finally, go to the next step.
\end{enumerate}
\medskip

 Note that in either case $R(\cdots;j)$ only steps forward but not ``backward'', since each child part is a subset of its parent part, and the initial segment requirements are inherited.

\subsection{Verification}
\label{subsec7}
We need the following three facts as mentioned above.
\begin{lemma}
\label{fac6}
Within each step, case i will not occur to $R(\cdots;j)$ forever.
\end{lemma}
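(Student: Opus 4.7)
The plan is to argue by contradiction: suppose the R-i-Operation is applied infinitely often during step $s$. At the start of the step the tree $T=T_{s-1}$, the number of parts $k=k_{s-1}$, and the initial segment requirement $\vec{\rho}$ are fixed, and each successful application strictly extends some side's initial segment (adding at least one bit) and increments exactly one of the $2k$ counters $e^{s-1,j}_{il}, e^{s-1,j}_{ir}$ by one.

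First, since there are only $2k$ sides, pigeonhole yields a single side, say the left of some part $i_0$, to which case i is triggered infinitely often. Restricting to that subsequence produces a proper chain $\rho^0\subsetneq\rho^1\subsetneq\rho^2\subsetneq\cdots$ in $2^{<\omega}$ with $\set(\rho^m)\subseteq A$, corresponding indices $e_m\to\infty$, and witnesses $n_m$ satisfying either $|\Psi_{e_m}^{C^{(j)}\oplus\rho^m}(n_m)|>e_m$ or $\Psi_{e_m}^{C^{(j)}\oplus\rho^m}(n_m)\cap Q=\emptyset$. The compatibility clause additionally provides $X_m\in [T]$ with $\set(\rho^m)\subseteq\pi_{i_0}^k(X_m)$, and since $[T]\subseteq 2^\omega$ is compact, König's lemma yields a single $X\in [T]$ compatible with every $\rho^m$. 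Then $Y=\lim_m\rho^m\in 2^\omega$ satisfies $\set(Y)\subseteq\pi_{i_0}^k(X)\cap A$, and because $Y$ extends every $\rho^m$ as a binary string, each witnessed misbehaviour is inherited: $\Psi_{e_m}^{C^{(j)}\oplus Y}$ fails to be a strong $e_m$-enumeration of $Q$ for every $m$.

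The main obstacle will be to turn this configuration into a genuine contradiction with the standing hypothesis of Theorem~\ref{th1}. My plan is to observe that, once the tree $T$ (which is computable in some $C\in\mathcal{M}$) is fixed, the search for a witness $\rho^m$ at each stage can be carried out by a procedure whose behaviour is controlled by $C^{(j)}\oplus C$, and the role of the $A$-oracle in selecting $\rho^m$ can be absorbed into the tree structure; this parallels the way Lemma~\ref{fac8} will extract an $\mathcal{M}$-computable strong $k$-enumeration of $Q$ from the dual failure of both case i and case ii. Running that extraction here, with the index $e_m$ ranging uniformly over $m$, should produce a strong $k$-enumeration of $Q$ computable in some $C\in\mathcal{M}$ for some fixed $k$, directly violating the assumption that $Q$ admits no such $\mathcal{M}$-computable enumeration. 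Hence case i must fail after finitely many applications, and the loop in step $s$ terminates.
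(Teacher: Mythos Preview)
Your first paragraph is fine: pigeonhole does give you a single side, say the left of part $i_0$, whose counter $e^{s-1,j}_{i_0,l}$ is incremented infinitely often and hence runs through an unbounded sequence $e_0 < e_1 < e_2 < \cdots$. But from that point on your argument collapses. The compactness step and the limit $Y$ give you a set on which every $\Psi_{e_m}^{C^{(j)}\oplus Y}$ fails to be a strong $e_m$-enumeration of $Q$ --- but that is exactly the \emph{goal} of the whole construction, not a contradiction. Your attempt to salvage this by ``absorbing the $A$-oracle into the tree structure'' and invoking an analogy with Lemma~\ref{fac8} does not work: the $\rho^m$ are chosen subject to $\set(\rho^m)\subseteq A$, and $A$ is by assumption not $\mathcal{M}$-computable, so no $\mathcal{M}$-computable search can reproduce them; and Lemma~\ref{fac8} concerns the situation where case~i and case~ii \emph{both fail}, which is the opposite of your hypothesis that case~i occurs infinitely often.

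The actual contradiction is much more elementary and available already at the point where you have $e_m\to\infty$ on a fixed side. The counter on that side passes through \emph{every} integer from $e_0$ onward, so in particular it must at some stage equal the index of a Turing functional $\Psi_e$ that never halts on any input (there are infinitely many such indices). But the condition for case~i on that side requires $\Psi_e^{C^{(j)}\oplus\rho}(n)\!\downarrow$ for some $\rho,n$; with a nowhere-halting $\Psi_e$ this is impossible, so the counter is frozen at that value and can never be incremented again. This already contradicts $e_m\to\infty$. The paper's proof is precisely this observation applied to all $2k$ sides at once: each side's counter eventually reaches a trivial index and freezes, and once all sides are frozen case~i can no longer occur.
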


\begin{lemma}
\label{fac7}
For any $A$ and any $C$-computable ordered $k$-partition tree $T$ such
that $[T]\ne\emptyset$, if $A\nleq_T C$ then there exist $i\leq k$ and
$X \in [T]$ such that both $\pi_i^k(X)\cap A$ and $\pi_i^k(X)\cap
\bar{A}$ are nonempty, which implies that in case ii, for at least one
of the new parts, the P-Operation succeeds on the side of that part to
which it is applied.
\end{lemma}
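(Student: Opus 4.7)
I prove the contrapositive: suppose that for every $X\in[T]$ and every $i\le k$ we have $\pi_i^k(X)\subseteq A$ or $\pi_i^k(X)\subseteq\bar A$, and derive $A\le_T C$, contradicting the hypothesis. The strategy is to exhibit $A$ as an element of a nonempty finite $\Pi_1^{0,C}$-class; since every nonempty finite $\Pi_1^{0,C}$-class has only $C$-computable members (each element is isolated and therefore is the unique path of a $C$-computable clopen subtree), this yields the contradiction.

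The key combinatorial fact is as follows. For each $X\in[T]$ let $\sim_X$ be the transitive closure on $\omega$ of the relation ``$j,j'\in\pi_i^k(X)$ for some $i\le k$''. Because $X$ covers $\omega$ with $k$ parts and each $\sim_X$-class is the union of the parts sharing an element, $\omega/\!\sim_X$ has at most $k$ classes, and under the monochromaticity assumption $A$ is constant on each class. Let $\sim^{\ast}$ be the transitive closure of $\bigcup_{X\in[T]}\sim_X$; since $\sim^{\ast}\supseteq\sim_X$ for any fixed $X\in[T]$, we still have $|\omega/\!\sim^{\ast}|\le k$, and $A$ remains constant on each $\sim^{\ast}$-class because every one of its edges comes from the parts of some $X\in[T]$. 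I then define the $C$-computable tree $\hat T\subseteq 2^{<\omega}$ by declaring $\tau\in\hat T$ iff for every $\sigma\in T$ with $|\sigma|\ge k|\tau|$, the string $\tau$ is constant on each $\sim_\sigma$-equivalence class intersected with $\{1,\ldots,|\tau|\}$. Then $[\hat T]$ is exactly the set of $Y\in 2^\omega$ that are constant on the $\sim^{\ast}$-classes, so $|[\hat T]|\le 2^{|\omega/\sim^{\ast}|}\le 2^k$, and $A\in[\hat T]$. Finiteness of the $\Pi_1^{0,C}$-class $[\hat T]$ then forces $A\le_T C$.

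\textbf{Main obstacle.} The delicate point is ensuring $A\in[\hat T]$: a non-extendable (``dead-end'') node $\sigma\in T$ could contribute $\sim_\sigma$-edges that $A$ fails to respect, since the monochromaticity hypothesis only constrains actual paths. I handle this by passing to the pruned subtree of $T$, which has the same set of infinite paths; in the ambient construction, $T$ arises from iterated $\textit{Cross}$ operations applied to $\Pi_1^{0}$-classes, so this pruning can be carried out while preserving $C$-computability of the resulting tree. After pruning, every $\sigma\in T$ extends to some $X\in[T]$, so $\pi_i^k(\sigma)\subseteq\pi_i^k(X)$ inherits monochromaticity, $A$ respects every $\sim_\sigma$-edge, and the argument above goes through unchanged.
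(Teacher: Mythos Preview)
Your overall strategy---place $A$ inside a finite $\Pi_1^{0,C}$ class and invoke the fact that isolated members of such a class are $C$-computable---is a natural and elegant idea, and it is genuinely different from what the paper does. The equivalence-relation bookkeeping is correct: under the monochromaticity hypothesis $A$ respects every $\sim_X$, the coarsening $\sim^{\ast}$ still has at most $k$ classes, and $[\hat T]$ (whatever its exact description) is contained in the set of $\sim^{\ast}$-invariant reals, hence has at most $2^k$ elements.

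The gap is exactly where you flagged it, but your fix does not work. The pruned subtree of a $C$-computable tree is in general only co-$C$-c.e., not $C$-computable: deciding whether $\sigma$ is extendable to an infinite path is a $\Pi_1^{0,C}$ question, and for the trees $T_{V}$ and their \textit{Cross} products arising in the construction there is no reason to expect this to be decidable in $C$. (These trees are defined by genuinely $\Pi_1$ conditions of the form ``for all finite $Z$ and all $n$, $\Psi^{C\oplus Z}(n)$ does not abandon $V$'', so dead ends are exactly the nodes where a witness to abandonment appears only later.) Your appeal to ``the ambient construction'' therefore does not rescue the argument. Without pruning, $A$ need not lie in $[\hat T]$, and the proof collapses.

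The paper sidesteps this by giving a direct decision procedure for $A$ rather than trying to capture $A$ in a finite $\Pi_1^{0,C}$ class. It (implicitly) waits, for each $n$, until every node $\rho$ of $T$ at some level either places $n$ unambiguously on one side or has visibly died (no extensions at some higher level). Because dead ends are $C$-c.e.\ events, this waiting can be done $C$-effectively; compactness guarantees the wait terminates. This is the standard trick for coping with dead ends in $\Pi_1^{0,C}$ classes, and it replaces your pruning step. If you want to salvage your approach, you would need to build this waiting into the definition of $\hat T$ itself, but doing so carefully essentially reproduces the paper's direct argument.
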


\begin{lemma}
\label{fac8}
Let $E$ be a set of finite subsets of $2^{<\omega}$ c.e.\ in $\mathbf{e}$
such that, for all $n$:
\begin{enumerate}
\item $\mathcal{W}_n=\{W\subseteq 2^{<\omega}: \forall \rho\in
  W,|\rho|=n\wedge W\notin E\} $ is not a $k'$-disperse class.
\item $Q_n=\{\rho\in 2^{<\omega}: |\rho|=n\wedge [\rho]\cap
  [Q]\ne\emptyset\}\notin E$.
\end{enumerate}
Then there exists a strong $k'$-enumeration $h:\omega\rightarrow \omega$
of $Q$ computable in $\mathbf{e}$.

 Actually, we could further
require that $(\forall \rho\in D_{h(n)})|\rho|=n$.
This implies that for any specific tuple $(T,\vec{\rho},j,\vec{\Psi}=\vec{e}^{\ s-1,j})$
 in the construction either case i or case ii occurs since otherwise
$E=\{W\subseteq 2^{<\omega}: \forall \rho,\sigma\in W,|\rho|=|\sigma|,
[T_W]=\emptyset\}$ is a $C\oplus C^{(j)}$-c.e.\ set which satisfies
(1) and (2) (recall item (2) in the Note to Step $s$ and the
definition of R-ii-Operation).
\end{lemma}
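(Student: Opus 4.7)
The plan is to compute, given $n$, a finite set $D_{h(n)}$ of length-$n$ strings with $|D_{h(n)}|\le k'$ and $D_{h(n)}\cap Q_n\ne\emptyset$. The $\mathbf{e}$-oracle can do this by a uniform finite search: enumerate candidate $k'$-tuples $(\sigma_1,\dots,\sigma_{k'})$ of length-$n$ strings, set $D=\{\sigma_1,\dots,\sigma_{k'}\}$, and, using the enumeration of $E$, wait until every $W\subseteq\{0,1\}^n$ that is disjoint from $D$ has appeared in $E$ (there are only finitely many such $W$). Output the first $D$ for which this verification terminates, and let $h(n)$ be its canonical index.

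Termination of the search would come from condition $(1)$. Fix an ordered covering $P^{(1)}\cup\cdots\cup P^{(k')}=\mathcal{W}_n$ witnessing that $\mathcal{W}_n$ is not $k'$-disperse, so $\bigcap_{W\in P^{(i)}}[W]^\preceq\ne\emptyset$ for each $i$; pick any $X_i$ in this intersection and set $\sigma_i=X_i\upharpoonright n$. For each $W\in P^{(i)}$, the path $X_i$ must extend some string in $W$, and since every string in $W$ has length exactly $n$, that string must be $\sigma_i$; hence $\sigma_i\in W$. So, taking $D=\{\sigma_1,\dots,\sigma_{k'}\}$, any $W\subseteq\{0,1\}^n$ disjoint from $D$ lies outside $\mathcal{W}_n$, hence in $E$, and this $D$ passes the verification in finite time.

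For correctness, when the search outputs some $D$, condition $(2)$ gives $Q_n\notin E$; if $Q_n\cap D$ were empty then the verification would have forced $Q_n\in E$, a contradiction. Hence $Q_n\cap D\ne\emptyset$, so $h$ is a strong $k'$-enumeration of $Q$, automatically satisfying the added requirement that every string in $D_{h(n)}$ have length $n$. For the applicability remark, $E=\{W:\forall\rho,\sigma\in W,\,|\rho|=|\sigma|,\,[T_W]=\emptyset\}$ is $C\oplus C^{(j)}$-c.e., since emptiness of the $\Pi_1^{0,C\oplus C^{(j)}}$ class $[T_W]$ is a $\Sigma_1^{0,C\oplus C^{(j)}}$ predicate; condition $(2)$ holds because $[Q_n]^\preceq\supseteq[Q]$ forces $[T_{Q_n}]\ne\emptyset$ whenever case i does not occur; and condition $(1)$ at level $n$ is equivalent to the non-existence of a finite $k'$-disperse selection from $\mathcal{W}_n$ (restricting a covering to a sub-collection only enlarges each piece's intersection, and one can always normalize a finite family of clopen sets to a common length), which is exactly the failure of case ii.

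The main subtlety is to extract a finite object of size at most $k'$ from the (possibly infinite) covering supplied by condition $(1)$. This step works precisely because every $W\in\mathcal{W}_n$ consists of length-$n$ strings, so a single length-$n$ prefix $\sigma_i=X_i\upharpoonright n$ of an intersection witness $X_i\in\bigcap_{W\in P^{(i)}}[W]^\preceq$ simultaneously picks out one element from every $W\in P^{(i)}$; if $W$'s were allowed to contain strings of varying lengths, distinct $W$'s in $P^{(i)}$ could force distinct prefixes and the reduction to a single $\sigma_i$ would break down.
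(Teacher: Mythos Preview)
Your proposal is correct and follows essentially the same approach as the paper. Both arguments extract, from a witness $P^{(1)}\cup\cdots\cup P^{(k')}=\mathcal{W}_n$ to non-$k'$-disperseness, a set of $k'$ length-$n$ strings meeting every member of $\mathcal{W}_n$ (hence meeting $Q_n$); the paper searches at each stage $t$ for such a partition of $\mathcal{W}_{n,t}$ and outputs the leftmost element of each $\bigcap_{W\in P^{(i)}}W$, while you search directly for the $k'$ strings and verify by waiting for every $W$ missing them to enter $E$---a dual formulation of the same finite search. Your treatment of the applicability clause (that failure of case~i gives $[T_{Q_n}]\ne\emptyset$ via the partition $X_{il}=\pi_i^k(X)\cap A$, $X_{ir}=\pi_i^k(X)\cap\bar A$, and that failure of case~ii gives condition~(1) after normalizing clopen sets to a common height and using that subfamilies of non-disperse families remain non-disperse) matches the paper's reasoning as well.
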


Assuming these facts we now show that the construction does provide
the desired $G$. Note that our construction yields a tree
$\mathcal{T}$: Each node at the $s^{\text{th}}$ level represents a
part at the $s^{\text{th}}$ step, and corresponds to a tuple
$(j;e_l,e_r)$ standing for the progress of step $s$ for $C^{(j)}$ (on
this part), where $C^{(j)}$ is the set that this step is devoted to,
i.e., the $j$ such that $R(\cdots;j)$ steps forward at this step. The
successor nodes of a
node $c$ are those representing parts that are descendant parts
of the part $c$ represents. Clearly $\mathcal T$ is a finitely
branching tree.

\begin{definition}
Say part $i$ fades away on the left-hand side at step $s$ iff $\forall
X\in[T_s],\ \pi_i^k(X)\cap A=\emptyset$ (and similarly for the
right-hand side with $\bar{A}$ replacing $A$).
\end{definition}

Note that if part $i$'s left-hand side fades away at step $s$ then it
fades away  forever, i.e., all its descendants also
fade away at the left-hand side. So by Lemma \ref{fac7}, there
exists an infinite subtree $\mathcal{T'}$ of $\mathcal{T}$
such that each $s^{\text{th}}$ level node of $\mathcal{T'}$
represents a part that has not yet faded away on either side at step
$s$. Therefore there exists a path $f$ through this subtree. Note that
for each $R(\cdots;j)$ there exists some side, say the right-hand side, such
that along $f$, $R(\cdots;j)$ steps forward on this side infinitely
often. Therefore
there exists some side, say the right-hand side, such that for any
$C^{(j)}$ there exists $C^{(j')}$ with $C^{(j)}\leq_T C^{(j')}$, and
there exist infinitely many steps $s_m,m\in\mathbb{N}$, such that step
$s_m$ is devoted to $R(\cdots;j')$, $f(s_m)$'s represented part
$f_{s_m}$ steps forward at the right-hand side at step $s_m$, and the
following P-Operation (applied to the new part corresponding to this
part's right-hand side) succeeds. In other words, for any
$C\in\mathcal{M}$ there exists $C^{(j')}$ such that $C\leq_T
C^{(j')}$ and along $f$, $R(\cdots;j')$ steps forward on the right-hand side
infinitely often
(which implies that all $R(j;e_r)$, $e_r\in\mathbb{N}$ are satisfied) and
the P-Operation succeeds (on the right-hand side) infinitely
often. Therefore,
$G=\bigcup_{s=1}^\infty \rho^{s}_{f_s, r}$ is infinite and
$C^{(j')}\oplus G$ doesn't compute a
constant-bound-enumeration of $Q$,
 so that $G\oplus C\leq_T G\oplus
C^{(j')}$ also fails to compute a strong
constant-bound-enumeration of $Q$, i.e.\ $\forall
C\in\mathcal{M},G\oplus C$ doesn't compute a
strong constant-bound-enumeration of $Q$.

It remains to prove the three facts mentioned above.

\begin{proof}[Proof of Lemma \ref{fac6}]
Recall that there are infinitely many Turing functionals $\Psi_e$ such
that, for any oracle $C \oplus G$, $\Psi_e^{C\oplus G}$ doesn't halt
on any input. Call such Turing functionals trivial. Clearly, at step
$s$, if a part proceeds to $(e_l,e_r)$ (during the case i loop), where
$\Psi_{e_l}$ is a trivial Turing functional, then the R-i-Operation will
never succeed on the left-hand side of this part, and similarly for
the right-hand side if $\Psi_{e_r}$ is trivial. Since there are only
finitely many different parts and sides, and the
R-i-Operation doesn't increase the total number of parts, either case
i fails to occur during the loop or all Turing functionals in the
progress tuples will finally be updated to trivial ones,
which also causes case i to fail to occur,
 since case i occurring implies that at least some Turing functional halts on some
 input.
\end{proof}

\begin{proof}[Proof of Lemma \ref{fac7}]
Suppose on the contrary that $\forall i\leq k([\forall
X\in[T],\pi_i^k(X)\cap A=\emptyset]\vee [\forall X\in[T],\pi_i^k(X)\cap
\bar{A}=\emptyset])$. Let $R=\{i\leq k: \forall
X\in[T],\pi_i^k(X)\cap A=\emptyset\}$ and $L$ be the set of all $i
\leq k$ that are not in $R$. We show that $A$ is computable in $C$. Note that
$T$ is a partition of $W =^* \omega = \omega - F$
for some finite set $F$,
i.e.\ $\forall X\in[T],\bigcup_{i=1}^k \pi_i^k(X)\supseteq \omega-F$.
To decide whether $n \in A$, suppose $n\notin F$,
wait (using $C$) for a moment such that for some $m$,
$\forall \rho, (|\rho|=m\wedge \rho\in T)\Rightarrow[(\forall i\in L,
n\notin \pi_i^k(\rho))\vee (\forall i\in R, n\notin
\pi_i^k(\rho))]\vee[\exists N\in\mathbb{N},\forall \sigma\supset
\rho,|\sigma|=N\rightarrow \sigma\notin T]$. Note that such a moment
must exist by the assumption that each part is contained in either
$A$ or $\bar{A}$, so $n\in A\rightarrow [(\forall i\in R, n\notin
\pi_i^k(\rho))\vee(\exists N\in\mathbb{N},\forall \sigma\supset
\rho,|\sigma|=N\rightarrow \sigma\notin T)]$ and similarly for $n\in
\bar{A}$.
Furthermore, since $\forall X\in[T],\bigcup_{i=1}^k \pi_i^k(X)\supseteq \omega-F$
we have ($\forall i\in L, n\notin \pi_i^k(\rho))\rightarrow
n\in\pi_j^k(\rho)$ for some $j\in R$ and $\rho\subset X \in [T]$. Therefore $n\in
A$ iff
$\exists m\forall \rho, (|\rho|=m\wedge \rho\in T)\Rightarrow(\forall
i\in R, n\notin \pi_i^k(\rho))$ and $n\in \bar{A}$ iff $\exists m\forall
\rho, (|\rho|=m\wedge \rho\in T)\Rightarrow(\forall i\in L, n\notin
\pi_i^k(\rho))$. So we could compute $A-F$ using $C$, which is clearly
equivalent to $A\leq_T C$.
\end{proof}

\begin{proof}[Proof of Lemma \ref{fac8}]
We first show that if neither case i nor case ii occurs then
$E=\{W\subseteq 2^{<\omega}: \forall \rho,\sigma\in W,|\rho|=|\sigma|,
[T_W]=\emptyset\}$ is $C\oplus C^{(j)}$-c.e.\ and satisfies (1) and (2).
Note that $E$ is clearly $C\oplus C^{(j)}$-c.e.

Clearly, if case ii does not
occur then $E$ satisfies (1) by the definition of ``case ii
occurs''. Suppose for some $Q_n$, $[T_{Q_n}]=\emptyset$. Consider an
arbitrary $X\in [T]$. For $i\leq k$ let $X_{il}=\pi_i^k(X)\cap A$ and
$X_{ir}=\pi_i^k(X)\cap\bar{A}$. Since $[T_{Q_n}]=\emptyset$,
$X'=\bigoplus_{i\leq k} (X_{il}\oplus X_{ir})\notin
[T_{Q_n}]$. Therefore on some part $i$, $\vec{\Psi_{i}}^{C^{(j)\oplus
    \vec{\rho_i}}}$ abandons $Q_n$ on $X_{il}\oplus X_{ir}$. Thus, by
the definition of abandoning, item (2), either $(\exists
\rho\supset\rho_{il}, \set(\rho)\subseteq X_{il}\subseteq
A,\exists n\in\mathbb{N})[\Psi_i^{C^{(j)}\oplus \rho}(n)\!\downarrow]$
$\cap[Q_n]=\emptyset\vee|\Psi_{il}^{C^{(j)}\oplus\rho}(n)|> \textrm{index of
$\Psi_{il}$)}$ or $(\exists \rho\supset\rho_{ir},
\set(\rho)\subseteq X_{ir}\subseteq\bar{A},\exists
n\in\mathbb{N})[\Psi_{ir}^{C^{(j)}\oplus \rho}(n)\!\downarrow] \cap
[Q_n]=\emptyset\vee|\Psi_{il}^{C^{(j)}\oplus\rho}(n)|> \textrm{(index of
$\Psi_{il}$)}$. However, this implies case i occurs.

Now we show that the algorithm of $E$ (or equally speaking, the degree $\mathbf{e}$)
 can be used to compute a strong $k$-enumeration of $Q$.
Let $E_t$ denote $E$ at stage $t$ and
$\mathcal{W}_{n,t}=\{W\subseteq 2^{<\omega}:\forall \rho\in W, |\rho|=n\wedge W\notin
E_{t}\}$. Note that since $E$ is an enumeration,
$\mathcal{W}_{n,t+1}\subseteq \mathcal{W}_{n,t}$. Therefore, if there
exists a $k'$-partition of $\mathcal{W}_n$, $P^{(1)}\cup
P^{(2)}\cup\cdots\cup P^{(k')}=\mathcal{W}_n$, such that $(\forall
i)(\bigcap_{W\in P^{(i)}}W\ne\emptyset)$, then this partition can be found in a
finite amount of time. Furthermore, $Q_n\in P^{(i)}$ for some $i$, so
$\forall \rho\in\bigcap_{W\in P^{(i)}}W, \rho\in Q$. It follows that
the function $h$ defined below, which is computable using an algorithm for $E$ (i.e.\ computable in any degree that computes $E$),
is a strong $k'$-enumeration of $Q$:
\begin{multline*}
D_{h(n)}=\{ \rho_1,\rho_2,\ldots, \rho_{k'}: \text{there exists some } t \\
\text{and a }k'\text{-partition of }\mathcal{W}_{n,t}=\bigcup_{i=1}^{k'}P^{(i)} \text{ s.t.\ }
 (\forall j)(\bigcap_{W\in P^{(j)}}W\ne\emptyset);\\
   \text{for }i=1,2,\ldots, k',\ \rho_i \text{ is the leftmost string in }\bigcap_{W\in
    P^{(i)}}W\}.
\end{multline*}
\end{proof}

\section{Applications}
\label{sec5}
\begin{corollary}
\label{cor1-2}
Over $\mathsf{RCA}_0$, $\mathsf{RT_2^2}$ \text{ does not imply } $\mathsf{ WWKL_0}$
\end{corollary}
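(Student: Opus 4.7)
The plan is to build a countable $\omega$-model $\mathcal{M}$ of $\mathsf{RCA}_0 + \mathsf{RT}_2^2$ that refutes $\mathsf{WWKL}_0$. By the equivalence, due to Ambos-Spies, Kjos-Hanssen, Lempp and Slaman \cite{ambos2004comparing}, of $\mathsf{WWKL}_0$ over $\mathsf{RCA}_0$ with the statement ``for every $X$ there exists $Y$ that is Martin-L\"of random relative to $X$,'' it suffices to construct a countable Turing ideal $\mathcal{M}$ closed under $\mathsf{RT}_2^2$ containing no Martin-L\"of random set; specializing the equivalence to $X=\emptyset$ then gives $\mathcal{M}\not\models\mathsf{WWKL}_0$. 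I fix the $\Pi^0_1$ class $[Q]=\{X\in 2^\omega:\forall n,\ K(X\upharpoonright n)\ge n-c\}$ for a constant $c$ chosen large enough that $\mu([Q])>0$; by the Levin--Schnorr characterization every member of $[Q]$ is Martin-L\"of random. Exactly as in the Hausdorff-dimension argument for $T_d$ discussed in the introduction, any computable strong $k$-enumeration of $Q$ would list at level $n$ candidate strings of Kolmogorov complexity $O(\log n)$, which for large $n$ contradicts the $n-c$ lower bound inherited by elements of $Q_n$; hence $Q$ admits no computable strong constant-bound-enumeration.

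I build $\mathcal{M}=\bigcup_s \mathcal{M}_s$ iteratively starting from $\mathcal{M}_0=$ the computable sets, maintaining the invariant that no strong constant-bound-enumeration of $Q$ is computable in $\mathcal{M}_s$. At stage $s$ I handle one $\mathsf{RT}_2^2$ instance using the Cholak--Jockusch--Slaman decomposition $\mathsf{RT}_2^2\equiv\mathsf{SRT}_2^2+\mathsf{COH}$ \cite{Cholak2001}. Given $f\in\mathcal{M}_s$ a $2$-coloring of $[\omega]^2$, I first invoke a $\mathsf{COH}$-preservation lemma to add an infinite set $C$ cohesive for the $\mathcal{M}_s$-computable column sequence $R_i=\{n>i:f(\{i,n\})=1\}$ without breaking the invariant. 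On $C$ the coloring is stable, which defines a $\Delta^0_2(C\oplus f)$ partition $A\cup\bar A$ of $C$. I then apply Theorem \ref{th1} to this $A$, with the Turing ideal generated by $\mathcal{M}_s\cup\{C\}$ playing the role of the countable class, to obtain an infinite $G\subseteq A$ or $G\subseteq\bar A$ preserving the invariant. A routine $(f\oplus G)$-computable thinning of $G$ produces a truly $f$-homogeneous set $H$, and I take $\mathcal{M}_{s+1}$ to be the Turing ideal generated by $\mathcal{M}_s\cup\{C,H\}$.

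At the limit $\mathcal{M}$ is a countable Turing ideal satisfying $\mathsf{RT}_2^2$. The invariant prevents any member of $\mathcal{M}$ from computing a path through $[Q]$, since such a path would itself give a strong $1$-enumeration of $Q$; by Ku\v{c}era's theorem every Martin-L\"of random computes a member of any $\Pi^0_1$ class of positive measure, and in particular of $[Q]$, so no Martin-L\"of random lies in $\mathcal{M}$. The main obstacle is the $\mathsf{COH}$-preservation lemma used above: one must show that cohesive sets can be added to the ideal without introducing any strong constant-bound-enumeration of $Q$. I expect this to follow from a Mathias-forcing argument in the spirit of (but combinatorially much simpler than) the tree-forcing of Theorem \ref{th1}; the ``abandoning'' machinery of Definition \ref{def3} and Fact \ref{fac2} preserves the invariant across the forcing, and the multi-part bookkeeping of that construction is avoided because $\mathsf{COH}$-forcing maintains only a single reservoir at each step.
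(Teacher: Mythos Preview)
Your proposal is correct and follows essentially the same route as the paper: build an $\omega$-model by iterating Theorem~\ref{th1} together with a $\mathsf{COH}$-preservation step, using the Cholak--Jockusch--Slaman decomposition $\mathsf{RT}_2^2 \equiv \mathsf{SRT}_2^2 + \mathsf{COH}$ and the tree $T_c$ of $c$-incompressible strings as the closed set $Q$. Two minor simplifications relative to your sketch: the $\mathsf{COH}$-preservation needs only a straightforward Mathias finite-extension argument (the paper carries this out directly, without the ``abandoning'' machinery), and the appeal to Ku\v{c}era is unnecessary, since the invariant already guarantees that no element of $\mathcal{M}$ lies in $[Q]$, which immediately witnesses the failure of $\mathsf{WWKL}_0$ via the computable positive-measure tree $Q$ itself.
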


Before proving this corollary, we
make some remarks.
It is known that in $\omega$-models $\mathsf{WWKL_0}$ is equivalent to the assertion that
for each $X$ there is a set that is Martin-L\"{o}f random relative to $X$. It is known that
$X\in2^\omega$ is Martin-L\"{o}f random iff $(\forall
n)\ K(X\upharpoonright n)\geq^+ n$, where $K(\rho)$ denotes the prefix-free
Kolmogorov complexity of a string, and furthermore there exists a universal prefix-free machine $U$ such that for every prefix-free machine $M$, $K_U\leq K_M + c$ for some constant depending on $M$.
For more background on algorithmic randomness theory, see \cite{nies2009computability,downey2008algorithmic}.
Fix a universal prefix-free machine $U$ in the following text.
\begin{definition}
A string $\rho$ is $c$-incompressible$_U$ iff $K_U(\rho)\geq |\rho|-c$.
\end{definition}
Note that $[T_c]=\{X\in 2^{\omega}:\forall n,X\upharpoonright n \text{ is c-incompressible}_U\}$ is a closed set of $2^{\omega}$.

\begin{lemma}
\label{lem1}
For any positive integers $k,c$, there is no computable
strong $k$-e\-nu\-me\-ra\-tion of the $c$-incompressible$_U$ strings.
\end{lemma}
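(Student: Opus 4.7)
The plan is to derive a contradiction by a standard compression argument. Suppose for contradiction that $h\in\omega^\omega$ is computable and is a strong $k$-enumeration of the array $\{(T_c)_n\}_{n\in\omega}$, where $(T_c)_n$ is the set of $c$-incompressible$_U$ strings of length exactly $n$. Then for each $n$, $D_{h(n)}$ has at most $k$ members and meets $(T_c)_n$, so $D_{h(n)}$ contains at least one length-$n$ string $\rho_n^{*}$ with $K_U(\rho_n^{*})\geq n-c$; in particular, the collection of length-$n$ strings in $D_{h(n)}$ is nonempty and has size at most $k$.

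The main step is to design a prefix-free machine $M$ that compresses every length-$n$ element of $D_{h(n)}$ to $O(\log n)$ bits. Fix a prefix-free encoding $\nu\colon\omega\to 2^{<\omega}$ with $|\nu(n)|\leq \log_2 n+2\log_2\log_2 n+O(1)$ (for instance an Elias code), and put $b=\lceil\log_2 k\rceil$. On input $\nu(n)\cdot\tau$ with $\tau\in\{0,1\}^b$, $M$ uses the computability of $h$ to evaluate $h(n)$, enumerates the length-$n$ elements of $D_{h(n)}$ in some canonical order, and outputs the $i$-th such element, where $i$ is the integer coded by $\tau$; if fewer than $i+1$ such strings exist, $M$ diverges. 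The domain of $M$ is prefix-free since $\nu$ is prefix-free and $\tau$ has fixed length $b$.

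By universality of $U$ there is a constant $c_M$ (depending on $k$ but not on $n$) with $K_U(\rho)\leq K_M(\rho)+c_M$ for all $\rho$, hence $K_U(\rho_n^{*})\leq \log_2 n+2\log_2\log_2 n+b+c_M+O(1)$. For all sufficiently large $n$ this bound is strictly less than $n-c$, contradicting $K_U(\rho_n^{*})\geq n-c$, and the lemma follows. I do not anticipate any real obstacle: the only point requiring care is that $M$ is genuinely prefix-free, which is guaranteed by concatenating a prefix-free encoding of $n$ with a fixed-length index. The underlying idea is simply that a $k$-enumeration of $\{(T_c)_n\}$ would furnish $O(\log n)$-bit descriptions of witnesses whose Kolmogorov complexity is supposed to be at least $n-c$, an impossibility for large $n$.
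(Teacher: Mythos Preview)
Your argument is correct and is essentially the same compression argument as the paper's: build a prefix-free machine that, given a short description of $n$ together with an index $i\leq k$, outputs the $i$-th element of $D_{h(n)}$, and observe that this would give sub-$(n-c)$ descriptions of a supposedly $c$-incompressible length-$n$ string. The only cosmetic difference is that the paper uses the unary code $\sigma_m=0^m1$ (of length $m+1$) and compensates for its linear length by reading off $D_{h(n^2)}$ instead of $D_{h(n)}$, whereas you use an Elias-type code of length $O(\log n)$ and work directly at level $n$; both choices yield the same contradiction.
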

\begin{proof}
Suppose on the contrary $h:\omega\rightarrow\omega$ is a computable
strong $k$-enumeration of the $c$-incompressible$_U$
strings. Let
$\sigma_n=\underbrace{00\ldots 0}_{n}1$. We define a prefix-free
machine $M$. The algorithm of $M$ is as
follows for each $n$: look for an $m\geq n^2$, then
output $\rho_i$ on input $\sigma_{kn+i}$ (note that $j\leq k$). Note that all strings
$\rho_1,\ldots,\rho_j \in D_{h(m)}$ have length $m = n^2$. Since
$U$ is universal, there exists a constant $d$ such that
$K_{U}(\rho_i)<kn+i+d$, therefore for sufficiently large $n$, $\forall
i\leq k, K_{U}(\rho_i)<|\rho_i|-c$, contradicting the assumption that
$h$ is a strong $k$-enumeration of the
$c$-incompressible$_U$ strings.
\end{proof}
Note that although Theorem \ref{th1} proves the cone avoidance result for
\emph{one} closed set $[Q]$, it can be easily adapted to construct $G$
cone avoiding countably many closed sets as long as for every finite number
of them, $Q_1, Q_2,\ldots, Q_n$, the joint union of them,
$[\bigvee_{i = 1}^{n} Q_i] = \{X\in 2^\omega: (\exists j\leq n\exists Y\in [Q_j])X = \rho_j * Y \}$
where $\rho_j$ is determined by $Q_j$,
satisfies the condition in Theorem \ref{th1}.

Now we can prove the main theorem.
\begin{proof}[Proof of Corollary \ref{cor1-2}]
It suffices to construct a countable class $\mathcal{M}$ satisfying
four conditions: (a) $C,B\in\mathcal{M}\rightarrow C\oplus
B\in\mathcal{M}$; (b) $(C\in\mathcal{M}\wedge B\leq_T C)\rightarrow
B\in\mathcal{M}$; (c) $(\forall C\in\mathcal{M})$ $C$ does not compute
a strong constant-bound-enumeration of any $T_c$;
(d) $\mathcal{M}\vDash \mathsf{RT_2^2}$.

Note that (a) and (b) ensure that $\mathsf{RCA}_0$ is satisfied,
(c) ensures that $\mathsf{WWKL}_0$ is not satisfied, and (d) ensures that $\mathsf{RT}_2^2$ is satisfied.

It is shown in \cite{Cholak2001} Lemma 7.11 and later corrected in \cite{CholakErrata}
that $\mathsf{RCA_0+RT_2^2}$ is equivalent to
$\mathsf{RCA_0+SRT_2^2+COH}$.
Here $\mathsf{COH}$ is the axiom saying that
for any $C = \bigoplus_{i = 0}^\infty C_{i}\in \mathcal{M}$
there exists an infinite set $C^*$ cohesive for $C$, i.e.\ for any component
$C_i$ either $C^* - C_i$ or $ C^* - \bar{C_i}$ is finite.

Moreover, we now show that it can be proved by the finite extension method that
if $\{R^{(i)}\}_{i\in\mathbb{N}}$ is a sequence such that no finite disjoint
union of the $R^{(i)}$ computes a strong
constant-bound-enumeration of any $T_c$, then
there exists an infinite set $G$ cohesive for
$\{R^{(i)}\}_{i\in\mathbb{N}}$ that also does not compute any
strong constant-bound-enumeration of any $T_c$.
To construct such a cohesive set, we try to satisfy the following requirements.

\[
\begin{split}
& R_e: \Psi_e^G \text{ is not a strong $e$-enumeration of any } T_c \\
& N_i: G\subseteq^* R^{(i)}\vee G\subseteq^* \omega-R^{(i)}
\end{split}
\]

Clearly we can inductively define a 0-1 sequence $\nu$ such that, letting $R^i = R^{(i)}$ if $\nu(i) = 1$ and
$R^i = \omega - R^{(i)}$ if $\nu(i) = 0$, we have $(\forall n) |\bigcap_{i = 0}^n R^i| = \infty$.

For each step $s$, let $G_s = \bigcap_{i = 0}^n R^i$; we construct an initial segment $\rho_s$
such that $\rho_s\supset \rho_{s-1}$, $set(\rho_s)\subseteq set(G_{s-1}/\rho_{s-1})$,
 and furthermore, $(\rho_s, G_s)$, as a Mathias forcing condition, forces $R_s$ and $N_s$.
 It is clear that for any $\rho$, $(\rho, G_s)$ forces $N_s$. We now choose $\rho\supset\rho_{s-1}\wedge set(\rho)\subseteq G_{s-1}$
 to satisfy $R_s$. There are two ways to do so: find a  $\rho$ with $\rho\supset\rho_{s-1}\wedge set(\rho)\subseteq G_{s-1}$ such
 that $\Psi_e^\rho(n)$ does not halt, or one such that $|\Psi_e^\rho(n)\!\downarrow|> e\vee \Psi_e^\rho(n)\cap T_c = \emptyset\vee ((\exists \sigma\in\Psi_e^\rho(n))|\sigma|>n)$.
 Suppose we can't find such $\rho$; this  implies that for every $n$,
 there exists $\rho$ with $\rho\supset\rho_{s-1}\wedge set(\rho)\subseteq G_{s-1}$
 such that $|\Psi_e^\rho(n)\!\downarrow|\leq e$ and $\Psi_e^\rho(n)$ is
 a set of length $n$ strings that has nonempty intersection with $T_c$.
 But then one could use this $\Psi_e$ and oracle $\bigoplus_{i=0}^s R^{(i)}$
 to compute a strong $e$-enumeration of $T_c$, a contradiction.

In the following text, for a class of sets $\mathcal{M}$, we say $X$ is
$\mathcal{M}$-computable iff $\exists C\in\mathcal{M}$ s.t.\ $X\leq_T C$.
The notions of  $\mathcal{M}$-effectively closed, $\Pi^{0,\mathcal M}_1$, etc.\ are defined similarly.
Note that the conclusion just proved above is relativizable to
any countable class of sets closed under join. Let $\mathcal{M}_0=$ all
recursive sets. Let $f$ be a computable
stable coloring. By
Theorem \ref{th1}, there exists an infinite $G_0$ with $G_0\subseteq
f_1\vee G_0\subseteq f_2$ such that $\forall
C\in\mathcal{M}_0$, for each $c$, $C\oplus G_0$ does not compute any
strong constant-bound-enumeration of any infinite subset of any
 $T_{c}$. Let
$\mathcal{M}_1=\{X\in2^\omega: X\leq_T C\oplus G_0$ for some
$C\in\mathcal{M}_0\}$. Clearly $\mathcal{M}_1$ satisfies (a)(b)(c). Let
$G_1$ be cohesive for a sequence of uniformly
$\mathcal{M}_1$-computable sets (where $\mathcal{M}_1$-computable
means computable in some $C\in\mathcal{M}_1$), so that $(\forall
C\in\mathcal{M}_1)$ $G_1\oplus C$ does not compute any
strong constant-bound-enumeration of any infinite subset of any $T_c$. Let
$\mathcal{M}_2=\{X\in 2^\omega: \exists C\in\mathcal{M}_1,X\leq_T
C\oplus G_1\}$. Clearly $\mathcal{M}_2$ also satisfies
(a)(b)(c). Iterate the above process to ensure that (1) for any
uniformly $\mathcal{M}_j$-computable sequence $C^{(1)},C^{(2)},\ldots$,
there exists an $i$ and a $G_{i-1}\in\mathcal{M}_i$
such that $G_{i-1}$ is cohesive for
$C^{(1)},C^{(2)},\ldots$ and (2) for any
$C\in\Delta_2^{0,\mathcal{M}_j}$ ($\Delta_2^{0,\mathcal{M}_j}$ means
$\Delta_2^{0,C}$ for some $C\in \mathcal{M}_j$),
there exists an $i$ and an infinite
$G_{i-1}\in\mathcal{M}_i$ such that
$G_{i-1}\subseteq C\vee G_{i-1}\subseteq
\bar{C}$. It follows that
$\mathcal{M}=\bigcup_{i=0}^{\infty}\mathcal{M}_i
\vDash\mathsf{RCA_0+SRT_2^2+COH}\Leftrightarrow
\mathsf{RCA_0+RT_2^2}$ but clearly $\mathcal{M}$ satisfies
(a)(b)(c). The conclusion follows.
\end{proof}

We now prove Corollary \ref{cor4}.

\begin{corollary}
There exists a $\textit{DNR}$ function that does not compute
\emph{any} binary sequence with positive effective Hausdorff dimension.
\end{corollary}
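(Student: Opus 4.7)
The plan is to combine Theorem \ref{th1} with the result of Hirschfeldt et al.\ \cite{hirschfeldt2008strength} producing a set $A \in \Delta_2^0$ with $\textit{DNR} \leq_u \textit{Part}_2(A)$; by the latter, every infinite subset of $A$ or of $\bar{A}$ computes a $\textit{DNR}$ function. I will exhibit a countable family of closed sets whose union captures every real of positive effective Hausdorff dimension, then apply the countable-family adaptation of Theorem \ref{th1} announced in the remark just after Lemma \ref{lem1}, with $\mathcal{M}$ equal to the class of computable sets and with this $A$. The $\textit{DNR}$ function computed from the resulting $G$ will inherit the cone-avoidance property.

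For each rational $d > 0$ and $N \in \omega$ let
\[
T_{d,N} = \{\rho \in 2^{<\omega} : |\rho| < N \text{ or } K_U(\rho) \geq d|\rho|\},
\]
and let $\mathcal{Q}_{d,N} = [T_{d,N}]$. Each $\mathcal{Q}_{d,N}$ is a nonempty $\Pi_1^0$ class (it contains every Martin-L\"of random real), and the associated extendable subtree admits no computable strong constant-bound-enumeration: the prefix-free machine scheme of Lemma \ref{lem1}, now applied at lengths $m \geq N$, describes each member of $D_{h(m)}$ by a code of length $O(\log m + \log k)$, contradicting $K_U \geq dm$ for $m$ large. Componentwise the same bound shows that for any finite subfamily $\{\mathcal{Q}_{d_i,N_i} : i \leq n\}$, the tagged joint union $\bigvee_i T_{d_i,N_i}$ also admits no computable strong constant-bound-enumeration: by pigeonhole some component's tag appears in $D_{h(m)}$ at infinitely many lengths, producing a strong $k$-enumeration of a subsequence of $\{(T_{d_i,N_i})_m\}_m$ which, by the note after Definition \ref{def1}, translates into a full strong $k$-enumeration of that component. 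Applying the countable-family extension of Theorem \ref{th1} with $\mathcal{M}$ the collection of computable sets (closed under $\oplus$), the set $A$, and the family $\{\mathcal{Q}_{d,N} : d \in \mathbb{Q}^+,\ N \in \omega\}$ yields an infinite $G$ with $G \subseteq A \vee G \subseteq \bar{A}$ such that for every computable $C$, $G \oplus C$ computes no strong constant-bound-enumeration of any $T_{d,N}$; by the final clause of Theorem \ref{th1}, $G$ itself computes no member of any $\mathcal{Q}_{d,N}$.

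By the choice of $A$, $G$ computes a $\textit{DNR}$ function $f$. Suppose $Y$ were a real of positive effective Hausdorff dimension computable from $f$; picking a rational $d \in (0, \dim(Y))$ and $N$ with $K_U(Y \restriction n) \geq dn$ for every $n \geq N$ places $Y \in \mathcal{Q}_{d,N}$, so $f \leq_T G$ would compute a member of $\mathcal{Q}_{d,N}$, a contradiction. The principal obstacle is the countably-many-closed-set strengthening of Theorem \ref{th1}: this is the content of the remark after Lemma \ref{lem1} and amounts to interleaving the construction of Section \ref{sec4} across an enumeration of the family, with the no-enumeration hypothesis on finite tagged joint unions playing the role of the single-closed-set hypothesis in the original proof.
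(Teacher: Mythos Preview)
Your proposal is correct and follows essentially the same route as the paper: apply the countable-family version of Theorem \ref{th1} with $\mathcal{M}$ the computable sets and $A$ the Hirschfeldt--Jockusch--Kjos-Hanssen--Lempp--Slaman $\Delta_2^0$ set, then pull a $\textit{DNR}$ function out of the resulting $G$. The only cosmetic difference is the indexing of the closed sets---the paper uses the single-parameter family $[T_{d_n}]$ for a sequence $d_n \to 0$ (and implicitly relies on the stronger ``no enumeration'' conclusion to handle the finitely many short lengths where $K_U(Y\upharpoonright n) < d_n n$), whereas you introduce the explicit cutoff $N$ so that the weaker ``no path'' conclusion already suffices.
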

\begin{proof}
Again note the construction can be adapted to construct a $G$ cone
avoiding countably many closed sets every finite joint union
of which satisfies the conditions of
Theorem \ref{th1}. Therefore, choose a sequence $d_n>0$ with
$\lim_{n\rightarrow \infty}d_n=0$ and let $A$ be a $\Delta_2^0$
set such that every infinite subset of $A$ or $\bar{A}$ computes a
$\textit{DNR}$ function. Now apply Theorem \ref{th1} to construct
$G\subseteq A\vee G\subseteq \bar{A}$ that cone avoids all
corresponding closed sets $[T_{d_n}]$, and let $f\leq_T G$ be a
$\textit{DNR}$ function. Then $f$ does not compute any binary sequence of positive effective Hausdorff dimension.
\end{proof}

For most ``natural'' trees that have been studied, if the induced
closed set has Muchnik degree strictly above $0$ as a mass problem,
then the tree does not admit a
strong constant-bound-enumeration. For example, this fact is true of
homogeneous trees.
\begin{definition}
A tree $T$ is homogeneous iff $(\forall \rho_1,\rho_2\in T\ \forall \rho)(\rho/\rho_1\in T \Leftrightarrow \rho/\rho_2\in T)$.

\end{definition}

\begin{lemma}
\label{lem2}
Let $T\subseteq\omega^{<\omega}$ be a finitely branching homogeneous
tree, and let $\mathbf{a}$ be a Turing degree such that $[T]$ is
$\mathbf{a}$-effectively closed. Then $\mathbf{a}$ computes a
strong $k$-enumeration of $T$ for some
$k\in\mathbb{N}$ iff $\mathbf{a}$ computes a path through $T$.
\end{lemma}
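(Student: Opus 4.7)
The forward direction is immediate: any $\mathbf{a}$-computable path $X \in [T]$ gives an $\mathbf{a}$-computable strong $1$-enumeration of $T$ via $h(n) := \{X \upharpoonright n\}$, hence an $\mathbf{a}$-computable strong $k$-enumeration for every $k \geq 1$.

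For the reverse direction, let $h$ be an $\mathbf{a}$-computable strong $k$-enumeration of $T$. The plan is to exploit the homogeneity of $T$ to decouple path-construction into independent coordinate-wise selections. Because $T$ is homogeneous, finitely branching, and $[T] \neq \emptyset$, the sets
\[
B_n := \{b \in \omega : \sigma * b \in T \text{ for any (hence every) } \sigma \in T_n\}
\]
are nonempty and finite, and $[T] = \prod_n B_n$; hence every sequence $(b_n)_n$ with $b_n \in B_n$ determines a path through $T$, and the choice at each coordinate is independent of the others. Projecting $D_{h(n+1)}$ onto its $n$-th coordinate gives an $\mathbf{a}$-computable finite set
\[
E_n := \{\rho(n) : \rho \in D_{h(n+1)},\ |\rho| = n+1\}
\]
of size at most $k$ with $E_n \cap B_n \neq \emptyset$, since any $\rho \in D_{h(n+1)} \cap T_{n+1}$ has $\rho(n) \in B_n$ by homogeneity. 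The task reduces to $\mathbf{a}$-computing, for each $n$, some $b_n \in E_n \cap B_n$.

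To carry out this selection, I would combine the positive information from $h$ with the $\mathbf{a}$-c.e.\ enumeration of strings pruned from $T$ (supplied by the $\mathbf{a}$-effective closedness of $[T]$). Inductively maintaining a prefix $\sigma_n \in T_n$ produced by the earlier selections, homogeneity yields $b \in B_n \Leftrightarrow \sigma_n * b \in T$, which is a $\Pi_1^{\mathbf{a}}$ condition on $b$. Running this test in parallel against the $\leq k$ candidates in $E_n$, at least one survives forever---any genuine $b \in E_n \cap B_n$. The main obstacle is committing to one such $b$ in finite time: the naive ``take the least candidate not yet eliminated'' strategy produces only a $\Delta_2^{\mathbf{a}}$-approximation. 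I expect the resolution to use the $k$-bound together with homogeneity across multiple higher levels $h(n+2), h(n+3), \ldots$ via a pigeonhole argument, so that any genuine candidate in $E_n$ can be certified in finite $\mathbf{a}$-computable time by its compatibility with the higher-level projections, while spurious candidates are eliminated by the pruning enumeration.
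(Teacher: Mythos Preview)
Your setup for the harder direction is natural---homogeneity gives $[T]=\prod_n B_n$ with each $B_n$ a finite $\Pi_1^{\mathbf a}$ set, and projecting $D_{h(m)}$ to coordinate $n$ yields a $\leq k$-element set $E_n^m$ meeting $B_n$---but the proposed ``pigeonhole across higher levels'' does not close the gap you yourself identify. Looking at $E_n^m$ for many $m$ gives no new leverage: each is just another $\leq k$-element set meeting $B_n$, with exactly the same ambiguity. A candidate $b$ appearing in many $E_n^m$ is no certificate that $b\in B_n$, and since the coordinates are independent there is no ``compatibility'' constraint between levels to exploit. In the situation where every element of every $E_n^m$ already lies in $B_n$, no spurious candidate is ever pruned and your procedure waits forever---yet this is precisely the easy case, where \emph{any} choice would have worked. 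Projecting to a single coordinate has discarded the information that distinguishes this case.

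The paper's proof proceeds instead by induction on $k$, via a non-effective case split on the full sets $D_{h(m)}$. Either cofinitely many $D_{h(m)}$ lie entirely inside $T$: then, fixing the (non-uniform) threshold $N$, every $\sigma\in D_{h(m)}$ with $m>N$ has $\sigma(n)\in B_n$, so letting $g(n)$ be the first such value found gives an $\mathbf a$-computable path directly. Or infinitely many $D_{h(m)}$ contain a string outside $T$: then, using the $\mathbf a$-c.e.\ enumeration of $\bar T$, one searches for such an $m$ with all strings of length $\geq n$, deletes the witnessed bad string, and restricts to length $n$; this yields an $\mathbf a$-computable strong $(k{-}1)$-enumeration, and one recurses. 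The case split is not $\mathbf a$-decidable, but the lemma only asserts the \emph{existence} of an $\mathbf a$-computable path, so non-uniformity is harmless. The idea missing from your proposal is exactly this reduction of $k$ by discarding a certifiably bad string from the whole set $D_{h(m)}$, rather than trying to certify a good value at an individual coordinate.
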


\begin{proof}
$(\Leftarrow)$:
Straightforward, since an enumeration of the initial segments of a
path through $T$ is a strong 1-enumeration of $T$.

$(\Rightarrow)$:
We prove this by induction. Let $h\leq_T \mathbf{a}$ be a
strong $k$-enumeration of $T$. Clearly the
conclusion follows when $k=1$.

Suppose $\exists N\in\mathbb{N}\forall n>N\ D_{h(n)}\subset T$. Then
clearly one can compute a path through $T$ as follows: Let $g(n)$ be
the first integer $g_n$ we find such that $\exists m>N \exists
\sigma\in D_{h(m)},\ \sigma(n)=g_n$. Since $T$ is homogeneous, $g$ is a
path through $T$.

Suppose $\forall N\in\mathbb{N}\exists n>N\ D_{h(n)}\not\subset
T$. Since $[T]$ is $\mathbf{a}$-effectively closed, which means
$\bar{T}$ is $\mathbf{a}$-c.e., $\{n\in\mathbb{N}: D_{h(n)}\not\subset
T\}$ is an infinite $\mathbf{a}$-c.e.\ set. Therefore one can compute
a strong $(k-1)$-enumeration of $T$ as follows:
Search for an $m$ and a $\sigma$ such that $\sigma\in D_{h(m)}\cap
\bar{T}_m$ and $\forall \rho\in D_{h(m)}\ |\rho|\geq n$, and let
$g(n)$ be the index of $\{\rho\upharpoonright_n:\rho\in D_{h(m)}-\{\sigma\}\}$.

Note that $|D_{g(n)}|\leq k-1$ and $D_{g(n)}\cap T\ne\emptyset$
(as otherwise $D_{h(m)}\cap T=\emptyset$), i.e.\ $g\in\omega^\omega$
is a strong $(k-1)$-enumeration  of $T$. The conclusion follows by induction.
\end{proof}

Combining Theorem \ref{th1} and Lemma \ref{lem2} yields:

\begin{corollary}
\label{cor2}
Let $\mathcal{M}$ be a countable class closed under disjoint union,
and let $Q\subseteq \omega^{<\omega}$ be
an infinite $\mathcal{M}$-computably bounded homogeneous tree such
that $[Q]$ is $\mathcal{M}$-effectively closed. Then $$(\forall A\in
2^\omega)(\exists G \textrm{ s.t.\ }(G\subseteq A\vee G\subseteq
\bar{A}) \wedge |G|=\infty)(\forall C\in\mathcal{M},f\in[Q])\ f\nleq_T G\oplus C$$
if and only if $[Q]\nleq_u\mathcal{M}$. Here $\leq_u$ denotes the Muchnik reducibility.
\end{corollary}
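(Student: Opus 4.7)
The plan is to dispatch the two directions separately. The forward direction is essentially by inspection, while the backward direction reduces to Theorem~\ref{th1} via a binary encoding and an appeal to Lemma~\ref{lem2}.

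For the forward direction, I would instantiate the cone-avoidance conclusion at $A=\emptyset$ to obtain an infinite $G$ satisfying $f\nleq_T G\oplus C$ for every $C\in\mathcal{M}$ and every $f\in[Q]$. Since $C\leq_T G\oplus C$, this immediately forces $f\nleq_T C$ for all such $C,f$, which is precisely the (class-wide) failure of Muchnik reducibility $[Q]\nleq_u\mathcal{M}$.

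For the backward direction, I would first pool the $\mathcal{M}$-witnesses into one set: by closure of $\mathcal{M}$ under disjoint union, fix $C_0\in\mathcal{M}$ that simultaneously computes an upper bound $g$ on $Q$ and enumerates $\omega^{<\omega}\setminus Q$. Without loss of generality $C_0\leq_T C$ for every $C\in\mathcal{M}$ under consideration, by replacing $\mathcal{M}$ with $\{C\oplus C_0:C\in\mathcal{M}\}$ (still closed under join and still witnessing $[Q]\nleq_u\mathcal{M}$). Using $g$, I would encode each level $n$ of $Q$ as a block of $\lceil\log_2(g(n)+1)\rceil$ binary digits, and let $Q^\flat\subseteq 2^{<\omega}$ be the set of extendable nodes in the resulting binary tree, so that $[Q^\flat]$ and $[Q]$ are in $C_0$-computable bijection.

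The crucial step is verifying that $Q^\flat$ meets the hypothesis of Theorem~\ref{th1}: no $C\in\mathcal{M}$ computes a strong $k$-enumeration of $Q^\flat$ for any $k$. Suppose some $C\in\mathcal{M}$ did; then level-by-level decoding through the $C_0$-computable blocking yields a $(C\oplus C_0)$-computable strong $k$-enumeration of $Q$ itself. Since $[Q]$ is $(C\oplus C_0)$-effectively closed, $Q$ is finitely branching, and $Q$ is homogeneous, Lemma~\ref{lem2} upgrades this enumeration to a $(C\oplus C_0)$-computable path of $Q$; but $C\oplus C_0\in\mathcal{M}$, contradicting $[Q]\nleq_u\mathcal{M}$. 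Now apply Theorem~\ref{th1} to $Q^\flat$, $\mathcal{M}$, and the given $A$ to obtain the required infinite $G\subseteq A\vee G\subseteq\bar A$: if some $C\in\mathcal{M}$ and $f\in[Q]$ satisfied $f\leq_T G\oplus C$, the corresponding $f^\flat\in[Q^\flat]$ would satisfy $f^\flat\leq_T G\oplus(C\oplus C_0)$, contradicting the fact that $G\oplus(C\oplus C_0)$ does not even compute a strong $1$-enumeration of $Q^\flat$. The main technical obstacle is the bookkeeping of the $\omega^{<\omega}\to 2^{<\omega}$ encoding so that strong $k$-enumerations and paths translate faithfully using only $C_0$-computable resources; once that is in place, Lemma~\ref{lem2} together with Theorem~\ref{th1} delivers the rest.
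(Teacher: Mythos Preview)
Your argument is correct, but the route you take for the backward direction differs from the paper's. The paper does not encode $Q$ into a binary tree and then invoke Theorem~\ref{th1} as a black box; instead it treats Theorem~\ref{th1} as though it had been stated for $\mathcal{M}$-computably bounded subtrees of $\omega^{<\omega}$, and the sole justification it offers is the remark immediately following the corollary: the $\mathcal{M}$-computable bound is exactly what is needed to keep each $\mathcal{W}_n$ in the proof of Lemma~\ref{fac8} a $\Pi_1^{0,\mathcal{M}}$ class uniformly in $n$, so that the entire construction of Section~\ref{sec4} goes through unchanged in the Baire-space setting. In other words, the paper re-runs the proof, while you reduce to the Cantor-space statement via a level-by-level coding into binary blocks of length $\lceil\log_2(g(n)+1)\rceil$.

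Both approaches use Lemma~\ref{lem2} in the same place, namely to upgrade a hypothetical $\mathcal{M}$-computable strong $k$-enumeration of the homogeneous tree to an $\mathcal{M}$-computable path, contradicting $[Q]\nleq_u\mathcal{M}$. Your approach has the virtue of treating Theorem~\ref{th1} as a finished product rather than revisiting its internals; the paper's approach avoids the coding bookkeeping and makes explicit \emph{why} the boundedness hypothesis is there. Your forward direction via instantiating $A=\emptyset$ is fine and is left implicit in the paper.
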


Note that the computable boundedness is needed to ensure that in the proof of Lemma
\ref{fac8} the class $\mathcal{W}_n$ is $\Pi_1^{0,\mathcal{M}}$ uniformly in $n$,
which makes sure that the enumeration algorithm is computable in $\mathcal{M}$.

For $f\in\mathbb{N}^\mathbb{N}$, $\mathsf{f\textrm{-}WKL_0}$ as a second order
arithmetic statement says: for any set $X$ there exists a $g\in
f\textit{-DNR}^X$. $\mathsf{WKL_0}$ is $\mathsf{f\textrm{-}WKL_0}$ restricted
to $f\equiv 2$. Using Corollary \ref{cor2}, we have:
\begin{corollary}
\label{cor3}
For any computable $f\in\mathbb{N}^\mathbb{N}$ such that $f(n)>1$  i.o., $\mathsf{RT_2^2}$ does not imply $\mathsf{f\textrm{-}WKL_0}$.
\end{corollary}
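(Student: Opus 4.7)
The plan is to mirror the proof of Corollary \ref{cor1-2} almost verbatim, with the family of trees $T_c$ of $c$-incompressible strings replaced by the single tree $T_f\subseteq \omega^{<\omega}$ whose paths are the $f\textit{-DNR}$ functions, namely $T_f=\{\sigma\in\omega^{<\omega}: (\forall n<|\sigma|)(\sigma(n)\leq f(n)\wedge (\Phi_n(n)\!\downarrow\Rightarrow \sigma(n)\ne \Phi_n(n)))\}$. Since $f$ is computable and the defining condition at position $n$ depends only on $\sigma(n)$, the tree $T_f$ is computable, computably bounded (by $f$), and homogeneous; the standard diagonal argument shows $[T_f]$ contains no computable function, and because $f(n)>1$ holds infinitely often, $T_f$ branches cofinally. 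By Lemma \ref{lem2} any degree $\mathbf{a}$ computes a strong constant-bound-enumeration of $T_f$ iff it computes some element of $[T_f]$; hence an $\omega$-model $\mathcal{M}$ satisfies $\mathsf{f\textrm{-}WKL_0}$ iff $[T_f]\leq_u \mathcal{M}$. It therefore suffices to exhibit a countable Turing ideal $\mathcal{M}\models \mathsf{RT_2^2}$ with $[T_f]\not\leq_u \mathcal{M}$.

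I would construct such an $\mathcal{M}=\bigcup_i \mathcal{M}_i$ by the same nested iteration used in the proof of Corollary \ref{cor1-2}, starting from $\mathcal{M}_0=$ the recursive sets, each $\mathcal{M}_i$ being a countable Turing ideal with $[T_f]\not\leq_u \mathcal{M}_i$. At stage $i$, one (a) applies Corollary \ref{cor2} with $Q=T_f$ — applicable because $T_f$ is $\mathcal{M}_i$-computably bounded, in fact bounded by the computable function $f$ — to adjoin, for every stable $\mathcal{M}_i$-computable $2$-coloring of pairs and every $\Delta_2^{0,\mathcal{M}_i}$ set $A$, an infinite homogeneous (resp.\ $A$- or $\bar A$-contained) set $G$ such that $G\oplus C$ does not compute any element of $[T_f]$ for any $C\in\mathcal{M}_i$; and (b) adjoins, for each uniformly $\mathcal{M}_i$-computable sequence $\{R^{(j)}\}_{j\in\mathbb{N}}$, a cohesive set $G$ with the same cone-avoidance property, by the finite-extension argument appearing in the proof of Corollary \ref{cor1-2} with $T_c$ replaced by $T_f$ throughout. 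The cohesive step succeeds because failure to extend the current Mathias condition so as to force $\Psi_s^G$ not to be a strong $s$-enumeration of $T_f$ would yield, as in that proof, an $\mathcal{M}_i$-computable strong $s$-enumeration of $T_f$, which by Lemma \ref{lem2} produces an $\mathcal{M}_i$-computable element of $[T_f]$, contradicting $[T_f]\not\leq_u \mathcal{M}_i$. Taking the limit yields $\mathcal{M}\models \mathsf{RCA}_0+\mathsf{SRT_2^2}+\mathsf{COH}\equiv \mathsf{RCA}_0+\mathsf{RT_2^2}$ while preserving $[T_f]\not\leq_u \mathcal{M}$.

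The only substantive point to verify is that $T_f$ is genuinely homogeneous in the sense required by Lemma \ref{lem2}: at each position $n$ the constraint on $\sigma(n)$ (namely $\sigma(n)\leq f(n)$ together with $\sigma(n)\ne \Phi_n(n)$ when $\Phi_n(n)\!\downarrow$) is independent of the earlier values, so $T_f$ is literally a product tree $\prod_n S_n$ and the equivalence $\rho/\rho_1\in T_f \Leftrightarrow \rho/\rho_2\in T_f$ is immediate for any $\rho_1,\rho_2\in T_f$ of equal length. I do not foresee any genuine obstacle beyond this straightforward check; the only bookkeeping required is that the $\mathcal{M}_i$-computable boundedness hypothesis of Corollary \ref{cor2} persists through the iteration, which holds trivially because $f$ is a fixed computable function and so belongs to every $\mathcal{M}_i$.
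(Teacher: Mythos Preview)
Your proposal is correct and is exactly the approach the paper intends: the paper's entire proof reads ``Exactly the same as Corollary~\ref{cor1-2},'' and you have spelled out what that means, replacing the trees $T_c$ by the single homogeneous, computably $f$-bounded tree $T_f$ and invoking Lemma~\ref{lem2}/Corollary~\ref{cor2} in place of Lemma~\ref{lem1}. One small inaccuracy: the ``iff'' you state (that $\mathcal{M}\models\mathsf{f\textrm{-}WKL_0}$ iff $[T_f]\leq_u\mathcal{M}$) is not literally true, since $\mathsf{f\textrm{-}WKL_0}$ is a relativized statement; but only the direction you actually use---$[T_f]\nleq_u\mathcal{M}$ implies $\mathcal{M}\not\models\mathsf{f\textrm{-}WKL_0}$ via the witness $X=\emptyset$---is needed, and that direction is correct.
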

\begin{proof}
Exactly the same as Corollary \ref{cor1-2}.
\end{proof}

We now mention an interesting result derived by Kjos-Hanssen
in algorithmic randomness theory.
\begin{corollary}[Kjos-Hanssen \cite{kjos2009infinite}]
\label{cor6}
For every Martin-L\"{o}f random set $A\in 2^\omega$, there exists an infinite
set $G\subseteq A\vee G\subseteq \bar{A}$ such that $G$ does not compute any Martin-L\"{o}f random set.
\end{corollary}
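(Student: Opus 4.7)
The plan is to invoke the adaptation of Theorem~\ref{th1} to countably many closed sets (the one noted just before the proof of Corollary~\ref{cor1-2}), applied to the family $\{[T_c]\}_{c \in \mathbb{N}}$ where $T_c$ is the tree of $c$-incompressible$_U$ strings. Since $X \in 2^\omega$ is Martin-L\"of random if and only if $X \in [T_c]$ for some $c$, producing an infinite $G \subseteq A \vee G \subseteq \bar A$ that computes no member of any $[T_c]$ immediately yields the corollary.

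First, take $\mathcal{M}$ to be the class of all computable sets (closed under join). By Lemma~\ref{lem1}, no computable function is a strong $k$-enumeration of $T_c$ for any $k, c$, so each individual $[T_c]$ satisfies the hypothesis of Theorem~\ref{th1}. The substantive step is to verify the hypothesis required by the countable-family adaptation: for every finite subfamily $T_{c_1}, \ldots, T_{c_n}$, the joint union tree $\bigvee_{i=1}^n T_{c_i}$ (whose paths are $\rho_j \ast Y$ for $Y \in [T_{c_j}]$ and fixed pairwise incompatible prefixes $\rho_j$) admits no computable strong constant-bound-enumeration. Given a putative computable strong $k$-enumeration $h$ of this joint tree, I would reproduce the compression argument of Lemma~\ref{lem1}: at level $m = n^2$, the set $D_{h(m)}$ contains at most $k$ strings of length $m$, and at least one has the form $\rho_j \ast \sigma$ with $\sigma \in (T_{c_j})_{m - |\rho_j|}$. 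Build a prefix-free machine $M$ that on input $0^{kn+i}1$ computes $h(n^2)$, selects the $i$-th member of $D_{h(n^2)}$, detects which $\rho_j$ it extends, and outputs the suffix $\sigma$. For large $n$ the description length $kn + i + O(1)$ falls strictly below $|\sigma| - c_j = n^2 - |\rho_j| - c_j$, contradicting the $c_j$-incompressibility of $\sigma$.

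With the joint-union hypothesis verified, applying the adapted Theorem~\ref{th1} to the given Martin-L\"of random $A$ produces an infinite $G \subseteq A \vee G \subseteq \bar A$ such that $G$ computes no element of any $[T_c]$, and therefore no Martin-L\"of random set. The main obstacle is the joint-union verification in the previous paragraph; everything else is a direct invocation of Theorem~\ref{th1} together with Lemma~\ref{lem1}.
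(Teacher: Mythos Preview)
Your proposal is correct and follows essentially the same route as the paper: invoke the countable-family adaptation of Theorem~\ref{th1} with $\mathcal{M}$ the computable sets and the family $\{[T_c]\}_c$, after checking that finite joint unions of the $T_c$ admit no computable strong constant-bound-enumeration. The paper simply cites Lemma~\ref{lem1} for that last check, whereas you spell out the (easy) adaptation of the compression argument to the joint union; otherwise the arguments coincide.
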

\begin{proof}
We've already shown in Lemma \ref{lem1} that every
finite joint union of the countably many
effectively closed sets
$[T_c]=\{X\in 2^{\omega}:\forall n,X\upharpoonright n \text{ is c-incompressible}_U\}$
satisfies the condition of Theorem \ref{th1}.
So the conclusion follows from Theorem \ref{th1} directly.
\end{proof}

\section{Remarks}
\label{sec6}
\begin{remark}
Although at each step of the construction we hold an
$\mathcal{M}$-computable tree $T$, updated by the R-ii-operation
and the P-operation, this is not essential. One could
pre-choose a path through $T$. Nonetheless, the construction tree
$\mathcal{T}$ is not avoidable. Furthermore, holding the tree requires
less computational power of the constructor.
\end{remark}
\begin{remark}
Can we cone avoid an \emph{arbitrary} tree $Q$ if $Q$ is
not Muchnik reducible to $\mathcal{M}$?
We guess not, and a construction showing that the answer
is no might be provided by modifying the proof of Theorem 2.3 of
\cite{hirschfeldt2008strength}, and noting that $\textit{Part}_k$
reduces a $k$-enumeration to a $1$-enumeration.
\end{remark}
\begin{remark}
We have modularized the construction into as many parts as we could,
which makes it convenient for further applications and
generalizations. For example, by modifying the R-ii-Operation, and
analyzing the proof of Lemma \ref{fac7}, one might strengthen Corollary
\ref{cor6} to prove Kjos-Hanssen's result in \cite{kjos2010strong}
that any Martin-L\"{o}f random set contains an infinite subset that does
not compute any Martin-L\"{o}f random set.

Another kind of application is to let $\textit{Part}_k$ serve as
a mediate as in Corollaries \ref{cor5} and \ref{cor4}, i.e.\ first
one proves that some class $\mathbb{X}$ is Muchnik reducible to
$\textit{Part}_k$ then it follows that some member of $\mathbb{X}$
does not compute any member of another class.
\end{remark}

\begin{remark}
A not so trivial generalization is to reinterpret $\subseteq,\cap$ and
therefore overload $\textit{Cross}$ and the definition of
``abandon''. This is explained as follows.

Say that we can strongly compute a class $\mathcal{Q}$ under the
condition $\langle\mathcal{B},\mathcal{C}(A)\rangle$, where
$\mathcal{B},\mathcal{C}(A)$ are classes, possibly with other
``parameters'', iff $\exists A\in\mathcal{B}\ \ \mathcal{Q}\leq_u
\mathcal{C}(A)$. (In this paper $\mathcal{B}=\mathcal{P}(\omega)$,
$\mathcal{C}(A)=\textit{Part}_2(A)$, $\mathcal{Q}$ is a closed set
with $Q$ non-$k$-enumerable in $\mathcal{M}$, and we could not
strongly compute $\mathcal{Q}$ under the condition
$\langle\mathcal{B},\mathcal{C}(A)\rangle$ for any $A$.) Therefore
$G\subseteq A$ is generalized as $G\in \mathcal{C}(A)$ and $X\cap Y$
is generalized as $\mathcal{C}(X)\cap \mathcal{C}(Y)$.
Note also that in many applications, $\mathcal{B}, \mathcal{C}(A)$ are
$\Pi_1^0$ classes, for example $\mathcal{B}=\{T\subseteq
2^{<\omega}:T\text{ is a tree satisfying certain property}\}$, say the
property of being $n$-bushy, and $\mathcal{C}(T)=[T]$ (or almost
$\Pi_1^0$ as $\textit{Part}_2(A)$, which is a $\Pi_1^0$ class minus a
countable class of sets). Therefore by redefining ``abandon'' and
$\textit{Cross}$, the main frame of the construction may still serve
for this purpose. (Of course, different combinatorial facts will be
needed in different cases and there would be no universal way to
produce these combinatorial facts; that belongs to the creative part
of mathematics. However, there might be some results with broad application,
since what we have studied is only a stone beside the sea.) We hope
that this frame could help in the study of homogeneous trees,
especially $f\textit{-DNR}$, and give universal solutions to various
results that have arisen recently, for example in
\cite{greenbergdiagonally,millerextracting}; see also
\cite{kumabe2009fixed}.

\end{remark}
\begin{question}
The most important question at hand is what kind of $\Pi_3^0$ class
can be avoided as above. Note that this is related to the question of
whether $\mathsf{SRT}_2^2$ implies $\mathsf{RT_2^2}$ which is
equivalent to whether $\mathsf{SRT_2^2}$ implies $\mathsf{COH}$.
Here $\mathsf{COH}$ is the second order arithmetic
statement that every sequence of sets $R^{(i)}$ has a cohesive set for
it, i.e.\ an infinite set $Y$ such that for all $i\in\mathbb{N}$ either $Y\cap
R^{(i)}$ or $Y\cap \bar{R}^{(i)}$ is finite. Note that the class of
cohesive sets of a given $R^{(i)}$ is a $\Pi_3^{0,R^{(i)}}$ class.

\end{question}

\begin{question}
What is the difference between the Muchnik lattice of $\textit{Part}_2$ and that of $\textit{Part}_3,\textit{Part}_k$?
(Here the Muchnik lattice of a collection of subsets of $\omega^\omega$ refers to the lattice where the $\leq$ relation is
induced by the Muchnik reducibility.)

For example, is there a $\textit{Part}_3(A_1,A_2)$ such that there exists no $\textit{Part}_2(A)$ to which $\textit{Part}_3(A_1,A_2)$ is Muchnik reducible? For the latter question we guess the answer is yes.
\end{question}

\bibliographystyle{amsplain}

\bibliography{cone}

\end{document}